\numberwithin{equation}{section}
\newtheorem{prop}{Proposition}
\newtheorem{lemma}[prop]{Lemma}
\newtheorem{thm}[prop]{Theorem}
\newtheorem{cor}[prop]{Corollary}
\numberwithin{prop}{section}
\theoremstyle{definition}
\newtheorem{defn}[prop]{Definition}
\newtheorem{ex}[prop]{Example}
\newtheorem{rmk}[prop]{Remark}
\newcommand{\del}{\partial}
\newcommand{\delb}{\bar{\partial}}\newcommand{\dt}{\frac{\partial}{\partial t}}
\newcommand{\brs}[1]{\left| #1 \right|}
\newcommand{\gG}{\Gamma}
\renewcommand{\gg}{\gamma}
\newcommand{\gD}{\Delta}
\newcommand{\gs}{\sigma}
\newcommand{\gU}{\Upsilon}
\newcommand{\gL}{\Lambda}
\newcommand{\gk}{\kappa}
\newcommand{\gl}{\lambda}
\newcommand{\gw}{\omega}
\newcommand{\ga}{\alpha}
\newcommand{\gb}{\beta}
\renewcommand{\ge}{\epsilon}
\newcommand{\N}{\nabla}
\newcommand{\FF}{\mathcal F}
\newcommand{\EE}{\mathcal E}
\newcommand{\til}[1]{\widetilde{#1}}
\renewcommand{\bar}[1]{\overline{#1}}
\renewcommand{\i}{\sqrt{-1}}
\newcommand{\QQ}{\mathcal Q}
\newcommand{\bgb}{\bar{\beta}}
\newcommand{\bc}{\bar{c}}
\newcommand{\bi}{\bar{i}}
\newcommand{\bj}{\bar{j}}
\newcommand{\bk}{\bar{k}}
\newcommand{\bl}{\bar{l}}
\newcommand{\bm}{\bar{m}}
\newcommand{\bp}{\bar{p}}
\newcommand{\bq}{\bar{q}}
\newcommand{\br}{\bar{r}}
\newcommand{\bs}{\bar{s}}
\newcommand{\IP}[1]{\left<#1\right>}
\DeclareMathOperator{\Rc}{Rc}
\DeclareMathOperator{\tr}{tr}
\DeclareMathOperator{\Ker}{Ker}
\DeclareMathOperator{\Id}{Id}
\DeclareMathOperator{\End}{End}
\begin{document}

\title[Non-K\"ahler Calabi-Yau geometry and pluriclosed flow]{Non-K\"ahler Calabi-Yau geometry and pluriclosed flow}

\begin{abstract} Hermitian, pluriclosed metrics with vanishing Bismut-Ricci form give a natural extension of Calabi-Yau metrics to the setting of complex, non-K\"ahler manifolds, and arise independently in mathematical physics.  We reinterpret this condition in terms of the Hermitian-Einstein equation on an associated holomorphic Courant algebroid, and thus refer to solutions as Bismut Hermitian-Einstein.  This implies Mumford-Takemoto slope stability obstructions, and using these we exhibit infinitely many topologically distinct complex manifolds in every dimension with vanishing first Chern class which do not admit Bismut Hermitian-Einstein metrics.  This reformulation also leads to a new description of pluriclosed flow in terms of Hermitian metrics on holomorphic Courant algebroids, implying new global existence results, in particular on all complex non-K\"ahler surfaces of Kodaira dimension $\gk \geq 0$.  On complex manifolds which admit Bismut-flat metrics we show global existence and convergence of pluriclosed flow to a Bismut-flat metric, which in turn gives a classification of generalized K\"ahler structures on these spaces.
\end{abstract}

\author{Mario Garcia-Fernandez}
\address{Universidad Aut\'onoma de Madrid and
	Instituto de Ciencias Matem\'aticas (CSIC-UAM-UC3M-UCM)\\ Ciudad
	Universitaria de Cantoblanco\\ 28049 Madrid, Spain}
\email{\href{mailto:mario.garcia@icmat.es}{mario.garcia@icmat.es}}

\author{Joshua Jordan}
\address{Rowland Hall\\
         University of California, Irvine\\
         Irvine, CA 92617}
\email{\href{mailto:jpjorda1@uci.edu}{jpjorda1@uci.edu}}

\author{Jeffrey Streets}
\address{Rowland Hall\\
         University of California, Irvine\\
         Irvine, CA 92617}
\email{\href{mailto:jstreets@uci.edu}{jstreets@uci.edu}}

\date{June 17, 2021}

\thanks{
The first author is partially supported by the Spanish Ministry of Science and Innovation, through the `Severo Ochoa Programme for Centres of Excellence in R\&D' (CEX2019-000904-S) and under grants PID2019-109339GA-C32 and EUR2020-112265.  The third author is supported by NSF-1454854.
}

\maketitle

\section{Introduction}

The Calabi-Yau Theorem \cite{YauCC} gives a definitive answer to the questions of existence and uniqueness of K\"ahler, Ricci-flat metrics on a given complex manifold, showing that when $c_1(M) = 0$, there exists a unique such metric in every K\"ahler class.  Later Cao \cite{CaoKRF} proved this theorem using K\"ahler-Ricci flow, showing global existence and convergence to a Ricci-flat metric with arbitrary initial data.  These profound results have applications throughout mathematics and physics, and in recent years there has been interest in extending this theory beyond K\"ahler manifolds to more general settings in complex geometry.  Given a complex manifold $(M^{2n}, J)$ let $g$ denote a Hermitian metric with K\"ahler form $\gw = gJ$.  We say that the metric is pluriclosed if
\begin{align*}
H := - d^c \gw, \qquad d H = 0.
\end{align*}
This is a natural generalization of the K\"ahler condition $d \gw = 0$, and for instance pluriclosed metrics exist on all compact complex surfaces \cite{Gauduchon1form}.  These metrics appear naturally in the index theory of non-K\"ahler manifolds \cite{Bismut}, and in the physics of supersymmetry.  There are many natural Hermitian connections in this setting, and here we focus on the Bismut connection (\cite{Bismut}, \cite{Strominger})
\begin{align*}
\N^B = \N + \tfrac{1}{2} g^{-1} H,
\end{align*}
where $\N$ denotes the Levi-Civita connection of $g$.  This is a Hermitian connection, and as such determines a representative of $c_1(M)$ via
\begin{align*}
\rho_B = \tfrac{1}{2} \tr \Omega^B J \in \Lambda^2(M).
\end{align*}
This is a natural Ricci-type curvature which coincides with the usual Ricci form in case $g$ is K\"ahler.  We say that a pluriclosed metric is \emph{Bismut Hermitian-Einstein} (the reason for this terminology will be explained below) if
\begin{align*}
\rho_B \equiv 0.
\end{align*}
Expressed in terms of the usual Ricci curvature, this equation is equivalent to
\begin{align*}
\Rc - \tfrac{1}{4} H^2 + \tfrac{1}{2} L_{\theta^{\sharp}} g =&\ 0,\\
d^* H - d \theta + i_{\theta^{\sharp}} H =&\ 0,
\end{align*}
where $H^2(X,Y) = \IP{i_X H, i_Y H}$, and $\theta = J d^* \gw$ is the Lee form.  This equation is thus a generalization of the Calabi-Yau condition to complex, non-K\"ahler geometry.  Furthermore, in the compact case it follows that Bismut Hermitian-Einstein metrics automatically satisfy a \emph{generalized Ricci soliton} equation (cf. Proposition \ref{p:solitons})
\begin{align*}
\Rc - \tfrac{1}{4} H^2 + \N^2 f =&\ 0,\\
d^* H + i_{\N f} H =&\ 0.
\end{align*}
These match the equations of motion in compactifications of type II supergravity \cite{Friedanetal}.

Taking inspiration from the Calabi-Yau Theorem, a natural initial question is whether the condition $c_1(M) = 0$ guarantees the existence of a Bismut Hermitian-Einstein metric\footnote{We note that we mean here and throughout vanishing as a de Rham class, as opposed to a Bott-Chern class.  See \cite{TosattiNKCY} for a review of non-K\"ahler Calabi-Yau geometries assuming $c_1 = 0$ in Bott-Chern sense.}. Already in the first nontrivial dimension $n=2$ we see a number of new issues arise, and the answer is emphatically no.  First, it follows from a theorem of Gauduchon-Ivanov \cite{GauduchonIvanov} (cf. \cite[Theorem 8.26]{garciafernStreets2020}) that the only non-K\"ahler solutions occur on \emph{standard Hopf surfaces}, where (up to finite quotients) the metric is isometric to the product metric $g_{S^3} \times g_{S^1}$.  Since all minimal non-K\"ahler complex surfaces of Kodaira dimension $\gk \geq 0$ have vanishing first Chern class, they are thus examples showing that one cannot guarantee existence in general.  Furthermore, there exist pluriclosed metrics which are generalized Ricci solitons on non-standard Hopf surfaces (\cite{Streetssolitons, SU1}), which fit into a smooth family with the example just described.  A complete existence theory should naturally incorporate such solitons.  

Our first step in tackling these subtleties is to reformulate the Bismut Hermitian-Einstein condition using the tools of \emph{generalized geometry}.  Note that every pluriclosed metric determines a class $[\del \gw]$ in the first \v Cech cohomology group $H^{1}(\gL^{2,0}_{cl})$ of the sheaf $\gL^{2,0}_{cl}$ of closed $(2,0)$-forms on $M$.  Such a class corresponds to an isomorphism class of exact holomorphic Courant algebroids \cite{GualtieriGKG}, which are holomorphic vector bundles fitting into an exact sequence
\begin{align*}
0 \longrightarrow T^*_{1,0} \overset{\pi^*}{\longrightarrow} \mathcal{Q} \overset{\pi}{\longrightarrow} T^{1,0} \longrightarrow 0
\end{align*}
and satisfying further axioms (cf. Definition \ref{d:CAhol}).  Furthermore, the structure of the holomorphic vector bundle is given by a twisted $\delb$-operator determined by $\del \gw$ \cite{Bismut}.  Pluriclosed metrics with a fixed torsion class $[\sqrt{-1}\tau] \in H^1(\gL^{2,0}_{cl})$ can be identified with pairs $(\gw, \gb)$, where $\tau - \del \gw = d \gb$, and using this description we determine a generalized Hermitian metric on $\QQ$ via (see Lemma \ref{l:SKTequivalence})
\begin{gather*}
G = \left( 
\begin{matrix}
g_{i \bj} + \gb_{i k} \bgb_{\bj \bl} g^{\bl k} & \i \gb_{ip} g^{\bl p}\\
- \i \bgb_{\bj \bp} g^{\bp k}  & g^{\bl k}
\end{matrix}
\right).
\end{gather*}
Building on this, we give an adapted presentation of (\cite[Theorem 2.9]{Bismut}), which shows that the \emph{Chern connection} of the Hermitian metric $G$ is equivalent in a canonical way to the \emph{Bismut connection} of $g$, yielding explicit relationships between their curvature tensors.  Specifically, letting $\Omega^G$ denote the Chern curvature of $G$, we set
\begin{align*}
S^G_g = \i \tr_{\gw} \Omega^G \in \End(\QQ).
\end{align*}
As it turns out, the tensor $S^G_g$ is in a natural sense equivalent to the Bismut-Ricci form, and in particular
\begin{align*}
S^G_g \equiv 0 \qquad \Longleftrightarrow \qquad \rho_B \equiv 0.
\end{align*}
It follows that the existence of Bismut Hermitian-Einstein metrics requires Mumford-Takemoto slope stability of the associated holomorphic Courant algebroid.  The precise statement below is Corollary \ref{cor:stab}.  Going further we use this abstract slope stability obstruction to give a concrete obstruction in terms of holomorphic maps in Theorem \ref{thm:obstruction} and Corollary \ref{cor:obstructionfib}.  As a consequence we give the first examples in complex dimension $n \geq 3$ of compact complex manifolds which do not admit Bismut Hermitian-Einstein metrics, in fact we give infinite families in every dimension (cf. Examples \ref{ex:TbundleRiemann}, \ref{ex:genT2bundle}).  We state this for emphasis as our first main theorem:

\begin{thm} \label{t:nonexistence} In every dimension there exist infinitely many complex manifolds with vanishing first Chern class which do not admit a Bismut Hermitian-Einstein metric.
\end{thm}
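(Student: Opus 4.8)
The plan is to exhibit, in every complex dimension $n$, an explicit infinite family of compact complex manifolds $M$ with $c_1(M) = 0$ (as a de Rham class) for which the associated holomorphic Courant algebroid $\QQ$ fails to be Mumford-Takemoto slope stable, and then invoke the equivalence $S^G_g \equiv 0 \Leftrightarrow \rho_B \equiv 0$ together with the fact (Corollary \ref{cor:stab}) that existence of a Bismut Hermitian-Einstein metric forces slope stability of $\QQ$. The natural candidates are total spaces of holomorphic torus bundles (or principal-type bundles) over a base with nontrivial geometry, as indicated by Examples \ref{ex:TbundleRiemann} and \ref{ex:genT2bundle}: start from a compact Riemann surface (or higher-dimensional K\"ahler base) $B$ and a suitable holomorphic submersion $\pi \colon M \to B$ whose fiber contributes trivially to $c_1$, so that $c_1(M) = \pi^* c_1(B) + (\text{fiber contribution})$ can be arranged to vanish in de Rham cohomology while the relative structure produces a destabilizing subsheaf.

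The key steps, in order, are: (i) construct the family — specify $B$, the bundle data (a lattice of $(1,0)$-forms or an elliptic/abelian fibration), and verify these are pairwise topologically distinct, e.g.\ by computing Betti numbers or fundamental groups so that infinitely many genuinely non-homeomorphic manifolds appear in each dimension; (ii) verify $c_1(M) = 0$ in $H^2_{dR}(M;\mathbb{R})$, which for torus bundles reduces to a Chern-Weil computation showing the relevant curvature form is exact; (iii) identify the exact holomorphic Courant algebroid $\QQ$ attached to a pluriclosed metric (such metrics exist on these spaces, e.g.\ built from the base metric and the fiberwise flat metric) and locate a coherent subsheaf $\mathcal{F} \subset \QQ$ — the natural choice being $\pi^* T^*_{1,0}B$ or the kernel/image of the relative tangent sequence sitting inside $\QQ$ via $\pi^*$ — and compute its slope against the Gauduchon-type polarization, showing $\mu(\mathcal{F}) > \mu(\QQ)$ (note $\mu(\QQ) = 0$ since $c_1(\QQ)$ is controlled by $c_1(M) = 0$), contradicting stability; (iv) conclude via Corollary \ref{cor:stab} and the equivalence $S^G_g \equiv 0 \Leftrightarrow \rho_B \equiv 0$ that no Bismut Hermitian-Einstein metric exists. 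Since this runs for every $n$ and every member of the infinite family, the theorem follows.

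The main obstacle I expect is step (iii): producing an honest destabilizing subsheaf of the \emph{holomorphic} Courant algebroid and computing its slope. The subtlety is that $\QQ$ is not the naive sum $T^*_{1,0} \oplus T^{1,0}$ as a holomorphic bundle — its $\delb$-operator is twisted by $\del\gw$ — so one must check that the proposed $\mathcal{F}$ is genuinely $\delb$-invariant (holomorphic) and not merely smoothly defined, and the Courant bracket / anchor constraints restrict which subbundles are admissible. The degree computation also requires care with the choice of Gauduchon metric used to define slope, since on non-K\"ahler $M$ the polarization is only a positive $(n-1,n-1)$-form rather than a K\"ahler class; one must ensure the sign of $\mu(\mathcal{F}) - \mu(\QQ)$ is robust across all admissible polarizations (or fix one adapted to the fibration). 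A secondary, more bookkeeping-level obstacle is arranging simultaneously that $c_1(M) = 0$ holds and that the fibration structure is nontrivial enough to destabilize — these pull in opposite directions, and the construction must thread between them, which is presumably why a specific family (tori over Riemann surfaces, then iterated/generalized torus bundles) is used rather than an abstract argument.
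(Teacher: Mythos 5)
Your proposal follows essentially the same route as the paper: Corollary \ref{cor:stab} is applied to the subsheaf $f^*T^*_{1,0}Z \hookrightarrow T^*_{1,0} \hookrightarrow \QQ$ induced by the projection of a principal torus bundle over a base $Z$ with $c_1(Z)<0$ (this is Theorem \ref{thm:obstruction} and Corollary \ref{cor:obstructionfib}), and the infinite families are precisely the torus bundles of Examples \ref{ex:TbundleRiemann} and \ref{ex:genT2bundle}, with the characteristic class chosen so that $c_1(M)=\pi^*c_1(Z)=0$ and the pluriclosed metric built from the base metric and a principal connection as you describe. The main obstacle you flag is in fact a non-issue: $T^*_{1,0}$ is the kernel of the anchor and hence automatically a holomorphic subbundle of $\QQ$ (the twist in $\delb^{\tau}$ acts only through the projection to $T^{1,0}$), and $\mu_{a}(\QQ)=0$ because $\det\QQ$ is canonically trivialized by the orthogonal pairing rather than because $c_1(M)=0$.
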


Returning to the question of constructibility of solutions, and recalling the construction of Calabi-Yau metrics using K\"ahler-Ricci flow, it is natural to construct Bismut Hermitian-Einstein metrics by means of a geometric flow.  With the goal of constructing canonical geometries on complex manifolds in mind, the third author and Tian introduced a natural geometric flow of pluriclosed metrics, called pluriclosed flow \cite{PCF}.  This equation is expressed in terms of a pair $(\gw, \gb)$ as above via
\begin{align*}
\dt \gw =&\ - \rho_B^{1,1}, \qquad \dt \gb = - \rho_B^{2,0}.
\end{align*}
This equation is strictly parabolic, and admits short-time solutions with arbitrary initial data on compact manifolds.  Global existence and convergence results in various geometric settings have appeared in \cite{ASnondeg, ASUtoric, Arroyo, Boling, FinoFlow, StreetsGG, StreetsND, StreetsSTB}.  Using the discussion above, we see that it is possible to equivalently formulate pluriclosed flow in terms of a family of generalized Hermitian metrics $G$ on $\QQ$, which satisfy
\begin{align*}
G^{-1} \dt G =&\ - S^G_g.
\end{align*}
Viewed this way, pluriclosed flow bears a formal similarity to the Hermitian-Yang-Mills flow \cite{DonaldsonHYM}, although here there is further nonlinearity due to the fact that the time-dependent metric $g$ is used to take the trace defining $S^G_g$, whereas a background metric is used in HYM flow.  Precursors of this formulation of pluriclosed flow appeared in \cite{StreetsPCFBI, JordanStreets}, where the equation was shown to hold locally.  The key point in producing a general, global formula is the use of the twisted $\delb$-operator on $\QQ$.  Restricting to the setting of vanishing first Chern class, and in view of the nonexistence phenomena discussed above, two natural questions emerge.  First, \emph{if a Bismut Hermitian-Einstein metric exists, does the flow always converge to it?}  Second, \emph{if no Bismut Hermitian-Einstein metric exists, what geometric singularities does the flow encounter?}

To address the first question, let us restrict to what so far gives the only known examples of non-K\"ahler Bismut Hermitian-Einstein structures, namely when the associated Bismut connection is \emph{flat}.  First note that by the classical results of Cartan-Schouten (\cite{CartanSchouten2,CartanSchouten} cf. also \cite{AgricolaFriedrich}), it follows that the universal cover of such a manifold is isometric as a Riemannian manifold to a simply connected Lie group equipped with bi-invariant metric and flat Cartan connection.  The low-dimensional compact examples give some of the first known examples of complex, non-K\"ahler manifolds.  The classic Hopf/Boothby metric on standard Hopf surfaces is Bismut-flat (cf. Example \ref{e:standardHopf}), as are the standard metrics on Calabi-Eckmann spaces (cf. Example \ref{e:CCE}).  These are special cases of the construction of Samelson \cite{Samelson}, who showed generally that any compact Lie group of even dimension admits left-invariant complex structures compatible with a bi-invariant metric, and that these structures are Bismut-flat (cf. also \cite{AlexandrovIvanov}).  It was shown recently in \cite{Bismutflat} that all compact Bismut-flat Hermitian manifolds are covered by these `Samelson spaces'.

Our second main theorem gives a definitive picture of the global existence and convergence of pluriclosed flow on such backgrounds:
\begin{thm} \label{t:mainthm} Let $(M^{2n}, \gw_F, J)$ be a compact Bismut-flat manifold.  Given $\gw_0$ a pluriclosed metric so that $[\del \omega_0] = [\del \omega_F] \in H^{2,1}_{\delb}$, the solution to pluriclosed flow with initial data $\gw_0$ exists on $[0,\infty)$ and converges to a Bismut-flat metric $\gw_{\infty}$.
\end{thm}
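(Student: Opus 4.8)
The plan is to reformulate the flow on the holomorphic Courant algebroid $\QQ$ associated to the fixed class $[\del \gw_F] = [\del \gw_0] \in H^{2,1}_{\delb}$, and to exploit the Bismut-flat metric $\gw_F$ as a reference point with respect to which the flow becomes a scalar-type equation with good estimates. First I would set up the $C^0$ control: since $[\del\gw_0]=[\del\gw_F]$, the evolving pair $(\gw_t,\gb_t)$ and the static pair $(\gw_F,0)$ define generalized Hermitian metrics $G_t$ and $G_F$ on the \emph{same} holomorphic Courant algebroid $\QQ$. Using the equivalence $S^G_g \equiv 0 \Leftrightarrow \rho_B\equiv 0$ and the fact that $\gw_F$ is Bismut-flat, $G_F$ is Chern-Einstein (indeed Chern-flat in the relevant sense) for $\QQ$, so the flow $G^{-1}\dt G = -S^G_g$ can be compared to this fixed solution. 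I would introduce $h = G_F^{-1} G_t \in \End(\QQ)$, a positive self-adjoint endomorphism, and derive the evolution equation for $\log\tr h + \log\tr h^{-1}$ or for $\brs{h}^2$; because the trace in $S^G_g$ is taken with the evolving metric $g_t$ rather than a background, the resulting equation is not literally the HYM-flow heat equation, but it should still admit a maximum-principle argument giving uniform two-sided bounds $C^{-1} G_F \leq G_t \leq C\, G_F$, hence uniform equivalence of $\gw_t$ with $\gw_F$ and uniform $C^0$ bounds on $\gb_t$.

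The second stage is higher-order regularity. Once $g_t$ is uniformly equivalent to $g_F$, I would invoke the standard pluriclosed-flow parabolic smoothing estimates (as developed in \cite{PCF, StreetsGG, StreetsND}) — Shi-type estimates bootstrapping from the $C^0$ bound, using that the torsion $H$ is controlled by $\del\gw$ and the flatness of the reference Bismut connection makes all curvature quantities of $g_F$ vanish — to obtain uniform $C^\infty$ bounds on $(\gw_t,\gb_t)$ on $[0,\infty)$. This yields long-time existence and subconvergence along a sequence $t_k \to \infty$ to some pluriclosed metric $\gw_\infty$.

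For genuine convergence and the identification of the limit as Bismut-flat, I would use a monotonicity/energy argument. The natural functional is the one whose gradient flow is pluriclosed flow in this gauge — concretely, on a Bismut-flat background one expects an expression built from $\int_M \brs{\log h}^2$ or a Donaldson-type functional for the Chern-Einstein problem on $\QQ$ — which is monotone along the flow with time-derivative controlled by $\int_M \brs{S^G_g}^2$. Monotonicity plus the uniform smooth bounds forces $S^G_g \to 0$, i.e. $\rho_B \to 0$ in $C^\infty$; combined with subconvergence this shows the limit $\gw_\infty$ satisfies $\rho_B \equiv 0$, and by the classification of Bismut Hermitian-Einstein metrics (Gauduchon-Ivanov, \cite{GauduchonIvanov}, together with the Samelson-space structure of Bismut-flat manifolds \cite{Bismutflat}) the limit is in fact Bismut-flat. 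Finally, to upgrade subconvergence to convergence of the full flow I would either invoke uniqueness of the limit within the fixed cohomology class together with a Lojasiewicz-Simon inequality for the energy functional near its critical set, or show directly that once $\gw_t$ is sufficiently close to the Bismut-flat limit the flow is exponentially contracting (the linearization at a Bismut-flat metric being a nonnegative operator with kernel tangent to the Bismut-flat deformations).

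The main obstacle I anticipate is the first step — obtaining the uniform $C^0$ estimate $C^{-1}G_F \leq G_t \leq C G_F$ — precisely because the trace defining $S^G_g$ uses the evolving metric, so the comparison equation for $h = G_F^{-1}G_t$ is not a clean heat equation and the maximum principle must absorb extra terms involving $\tr_{\gw_t}$ of curvature-type quantities; controlling these without already knowing $C^0$ bounds is the delicate point, and I expect it to require careful use of the Bismut-flatness of $g_F$ (so that the ``background'' contributions vanish) and possibly an auxiliary quantity combining $\tr h$, $\tr h^{-1}$, and the volume form ratio $\log(\gw_t^n/\gw_F^n)$ that satisfies a genuine differential inequality.
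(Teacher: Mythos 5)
Your first stage is essentially the paper's argument, and the difficulty you flag there resolves more cleanly than you fear: in the parabolic Schwarz lemma the reaction term is $\tr_G \IP{S^{G_F}_g\,\cdot,\cdot}_{G_F}$ with $S^{G_F}_g = \i \tr_\gw \Omega^{G_F}$, so when $G_F$ is Chern-flat (which Bismut-flatness of $\gw_F$ gives via Bismut's identity) this term vanishes identically \emph{no matter which metric performs the trace}, and $\tr_G G_F$ is an exact subsolution of the heat equation. Moreover, for generalized Hermitian metrics on a fixed Courant algebroid one has the symmetry $\tr_G G_F = \tr_{G_F} G$, so a single one-sided maximum principle already yields the two-sided bound $\gL^{-1} G_F \le G_t \le \gL G_F$; no auxiliary combination with $\tr h^{-1}$ or the volume ratio is needed.

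The genuine gap is in your convergence stage. First, your energy/monotonicity route only produces $S^G_g \to 0$, i.e.\ $\rho_B \to 0$, so the subsequential limit is merely Bismut Hermitian--Einstein; the Gauduchon--Ivanov classification you invoke to upgrade this to Bismut-flat is a statement about complex \emph{surfaces}, and the theorem is claimed in all dimensions, where no such classification exists. Second, the Lojasiewicz--Simon or linearized-contraction machinery you propose for full convergence is not available off the shelf for pluriclosed flow and would be a substantial undertaking. The paper closes both gaps with one elementary device: because the background Chern curvature vanishes, $\brs{\gU(G,G_F)}^2$ (the difference of Chern connections) also satisfies a clean heat inequality with nonpositive right-hand side, and the combined quantity $t\brs{\gU(G,G_F)}^2 + A\,\tr_{G_F}G$ is a subsolution, yielding the decay $\sup_{M\times\{t\}}\brs{\gU(G,G_F)}^2 \le C/t$. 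This forces every subsequential limit $G_\infty$ to satisfy $\gU(G_\infty,G_F)=0$, hence to be Chern-flat --- equivalently Bismut-flat at the level of $\gw_\infty$ --- in every dimension, and a pinching argument on $\tr_{G_\infty}G$ (preserved forward in time by the same maximum principle) then upgrades subconvergence to convergence of the whole flow line. Without a connection-level estimate of this kind your argument does not identify the limit as Bismut-flat, nor does it establish convergence of the full flow.
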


In view of our description of pluriclosed flow in terms of metrics on holomorphic Courant algebroids, the hypothesis $[\del \gw_0] = [\del \omega_F] \in H^{2,1}_{\delb}$ is natural, as then the solution to pluriclosed flow exists on a fixed holomorphic Courant algebroid with the same underlying holomorphic vector bundle as our given background flat metric.  In many concrete cases, however, we obtain a complete description of the global existence and convergence by exploiting the structure of the underlying cohomologies. This is the case for instance in Examples \ref{e:standardHopf} and \ref{e:CCE} below, where we obtain a complete description of the global existence and convergence of the flow with arbitrary initial data on standard Hopf surfaces and Calabi-Eckmann spaces, in particular confirming part (1) of Conjecture 7.6 of \cite{SG}.  Furthermore, we illustrate several higher dimensional examples of compact Lie groups where Theorem \ref{t:mainthm} can be applied.

From a PDE perspective Theorem \ref{t:mainthm} illustrates interesting behavior.  Pluriclosed metrics are described locally by a $(1,0)$-form, and pluriclosed flow admits a reduction to a quasilinear degenerate parabolic \emph{system} of equations for a $(1,0)$-form (cf. \cite{StreetsPCFBI}).  Such systems tend to exhibit local singularities, such as the classic neckpinch singularities of Ricci flow and mean curvature flow, and yet Theorem \ref{t:mainthm}, describing the global behavior with arbitrary initial data on certain backgrounds, shows that such behaviors cannot occur for pluriclosed flow.  From a more geometric point of view, we note that while recently there have been some results on geometric flows of non-K\"ahler metrics converging to rigid, K\"ahler metrics (cf. \cite{LeeStreets, StreetsPCFBI, CRF1}), or converging to interesting non-K\"ahler metrics assuming a certain symmetric ansatz (\cite{PPZ, StreetsRYMPCF}), Theorem \ref{t:mainthm} seems to be the first result showing that a natural class of \emph{non-K\"ahler} metrics is globally attractive for a geometric flow with arbitrary initial data.

The proof of Theorem \ref{t:mainthm} relies principally on the formulation of pluriclosed flow in terms of the associated generalized Hermitian metric $G$ described above.  Using this we derive extensions of the parabolic Schwarz Lemma for K\"ahler-Ricci flow to the setting of pluriclosed flow, which now have reaction terms expressed in terms of the Chern curvature of a chosen background metric on $\QQ$, which in turn is equivalent to the Bismut curvature of a background pluriclosed metric $g$.  Assuming the background metric has vanishing Bismut curvature, this leads to uniform equivalence of the time-dependent metrics $G_t$, and furthermore to a Calabi/Yau type estimate for the Chern connection of $G$.  After this uniform $C^{\infty}$ estimates follow from Schauder theory, leading to global existence of the flow.  The a priori estimates imply subconvergence of the flow to a Bismut-flat metric along some sequence of times, and further maximum principle arguments set up against the background of this subsequential limit show convergence of the flow line.

Turning to the second question above, we can ask what geometric singularities pluriclosed flow encounters on manifolds with vanishing first Chern class but which do not admit a Bismut Hermitian-Einstein metric.  As discussed above, the first such examples occur for minimal non-K\"ahler complex surfaces of Kodaira dimension $\gk \geq 0$.  We use our methods above to prove the definitive long-time existence result in this setting:

\begin{thm} \label{t:kod1thm} Let $(M^4, J)$ be a minimal non-K\"ahler surface of Kodaira dimension $\gk \geq 0$.  Given $\gw_0$ a pluriclosed metric on $M$, the solution to pluriclosed flow with initial condition $\gw_0$ exists on $[0,\infty)$.
\end{thm}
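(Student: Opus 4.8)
The plan is to reduce the global existence question to controlling the torsion class $[\del \gw_t] \in H^{2,1}_{\delb}$ along the flow, and then to exploit the structure of minimal non-K\"ahler surfaces of Kodaira dimension $\gk \geq 0$ to produce a uniform comparison with a fixed background. First I would invoke the reformulation of pluriclosed flow in terms of the generalized Hermitian metric $G_t$ on the holomorphic Courant algebroid $\QQ$ associated to $[\del \gw_0]$, satisfying $G^{-1}\dt G = -S^G_g$. The key structural input in dimension $n=2$ is that the class $[\del \gw_t]$ evolves by $\dt[\del \gw_t] = -[\del \rho_B^{1,1}] = [\del \del^*\!... ]$; since $\rho_B$ is closed and represents $\pi c_1(M)$ up to normalization, and $c_1(M)=0$ for these surfaces (all minimal non-K\"ahler surfaces with $\gk \geq 0$ — properly elliptic, Kodaira, and their finite quotients — have torsion or vanishing canonical bundle in de Rham cohomology), one gets that $[\rho_B] = 0$, so $\rho_B = -dd^c\phi_t$ for a time-dependent function, hence $[\del \gw_t]$ is \emph{constant} in $H^{2,1}_{\delb}$. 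This pins the flow to a single holomorphic Courant algebroid, exactly the setting where the $G$-formulation is globally valid.

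The second step is to produce an appropriate background. Unlike in Theorem \ref{t:mainthm}, these surfaces need not be Bismut-flat, so I cannot directly import the Bismut-flat comparison; instead I would use the known classification and geometry of these surfaces to construct a pluriclosed background metric $\gw_F$ in the same torsion class whose Bismut curvature is suitably controlled — for properly elliptic surfaces and Kodaira surfaces one has a holomorphic submersion or a nilmanifold/solvmanifold structure that furnishes a natural metric with $\rho_B$ having a sign or being small in a precise sense. With such a background in hand, the parabolic Schwarz-type lemmas developed for the proof of Theorem \ref{t:mainthm} (the estimates comparing $G_t$ with the Chern curvature of a background metric on $\QQ$, equivalently the Bismut curvature of $\gw_F$) give a lower bound $G_t \geq c\, G_F$, and the trace estimate $\tr_{G_F} G_t \leq C$ from a Chern-Laplacian maximum principle argument gives the reverse bound. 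This yields uniform equivalence of the metrics for all finite time, which is the heart of long-time existence: combined with the Calabi/Yau-type third-order estimate for the Chern connection of $G$ and Schauder theory, one bootstraps to uniform $C^\infty_{loc}$ bounds on any finite interval, so the flow cannot leave the space of pluriclosed metrics and extends past every finite time.

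The main obstacle I anticipate is precisely the background construction and the sign in the Schwarz lemma: for Kodaira dimension $\gk \geq 0$ non-K\"ahler surfaces the canonical bundle is numerically trivial but not necessarily the bundle $\QQ$'s relevant curvature term vanishes, so the reaction term in the parabolic inequality for $\tr_{G_F} G_t$ (or $\tr_{G_t} G_F$) is governed by the Bismut curvature of $\gw_F$, which is not zero. One needs the background's Bismut-Ricci form to be, say, non-positive or controlled by a uniformly bounded potential, and establishing this uniformly across all the surface types in the classification (using the elliptic fibration structure for properly elliptic surfaces, and the explicit solvmanifold models for Kodaira surfaces, passing to finite covers where necessary) is the technical crux. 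A secondary subtlety is ensuring the torsion-class argument is airtight when $c_1(M)$ is only torsion rather than zero — here one passes to a finite unramified cover to kill the torsion, runs the flow equivariantly, and descends the estimates, which is legitimate since pluriclosed flow commutes with local isometries and finite quotients.
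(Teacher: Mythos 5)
Your proposal correctly frames the problem --- reduce to a fixed holomorphic Courant algebroid, compare $G_t$ with a background $\tilde G$ via the parabolic Schwarz lemma, and bootstrap from uniform equivalence to $C^\infty$ bounds --- but it leaves the central step unresolved, and you say so yourself: you never actually produce a background whose curvature contribution to the reaction term can be controlled. This is not a technicality one can defer; it is the entire content of the theorem beyond the machinery of Theorem \ref{t:mainthm}. Moreover, the quantity you propose to control is not the right one. The reaction term in Proposition \ref{p:paraSL} is $\tr_G \IP{S^{\tilde G}_g\cdot,\cdot}_{\tilde G}$, where $S^{\tilde G}_g = \i\tr_\gw \Omega^{\tilde G}$ is the \emph{full} Chern curvature of the background traced against the \emph{evolving} metric $g_t$; a sign or smallness condition on the Bismut--Ricci form $\rho_B$ of the background alone does not control this, because the trace against the unknown $g_t$ can degenerate in directions where $\Omega^{\tilde G}$ does not vanish. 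Relatedly, your claim that $[\rho_B]=0$ in de Rham cohomology forces $\rho_B = -dd^c\phi_t$ fails on non-K\"ahler surfaces precisely because the $\del\delb$-lemma fails there (this failure is quantified by Lemma \ref{l:Teleman} in the paper); fortunately that step is not needed, since the flow of pairs $(\gw_t,\gb_t)$ lives on a fixed Courant algebroid by construction.

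The paper's resolution, which is absent from your proposal, exploits the elliptic fibration structure in two coupled ways. After passing to a finite cover where $M$ is a holomorphic principal $T^2$-bundle $\pi: M \to \Sigma$ (this is where the covering argument enters, not to kill torsion in $c_1$), one averages to obtain a $T^2$-invariant background $\hat\gw$ in the same Aeppli class (Lemma \ref{l:invmetrics}, which uses Teleman's result), normalized so that $R_\Sigma$ and $\tr_{\gw_\Sigma}F_\mu$ are constant. The two key facts are then: (i) the Bismut curvature of such an invariant metric has every term containing a factor of $\pi^*\gw_\Sigma$ (Lemma \ref{l:invcurvature}), so the reaction term is bounded by $C\,(\tr_g\pi^*\gw_\Sigma)\,\tr_G\hat G$; and (ii) the Schwarz lemma applied to the holomorphic bundle map $\Phi = d\pi\circ\pi_\QQ : \QQ\to T^{1,0}_\Sigma$ gives $(\dt-\gD)\tr_\gw\pi^*\gw_\Sigma \leq \tfrac12 R_\Sigma(\tr_\gw\pi^*\gw_\Sigma)^2$ with $R_\Sigma\leq 0$ since $\chi(\Sigma)\leq 0$, hence a uniform bound on $\tr_\gw\pi^*\gw_\Sigma$. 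Feeding (ii) into (i) yields $(\dt-\gD)\tr_G\hat G \leq C\tr_G\hat G$ and hence uniform equivalence on finite time intervals, after which the $\gU$ estimate and the bootstrap proceed as you describe. Without (i) and (ii) --- or some substitute for them --- your argument does not close.
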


The proof follows similar lines to Theorem \ref{t:mainthm} as described above, relying on a priori estimates for the metric derived from the Schwarz lemma.  It follows from Kodaira's results on elliptic fibrations that such manifolds are finitely covered by holomorphic principal $T^2$-fibrations over Riemann surfaces, and after passing to the relevant cover we use the fibration structure to obtain estimates over the horizontal components of the metric.  Using then special properties of the Bismut curvature tensor of a background $T^2$-invariant metric, we are able to derive maximum principles leading to uniform control over the whole metric.  This confirms the `cone conjecture' \cite[Conjecture 5.2]{PCFReg} for these manifolds.  It is expected that blowdown limits at infinity for this flow converge in Gromov-Hausdorff sense to the base of the fibration equipped with a constant curvature metric (resp. to a point) when $\gk = 1$ (resp. $\gk = 0$).  This has been confirmed for $T^2$-invariant initial data in \cite{StreetsRYMPCF} (cf. Remark \ref{r:convrmk}).

Theorem \ref{t:mainthm} also has applications to generalized K\"ahler (GK) geometry \cite{GualtieriGKG}.  A GK structure is a triple $(g, I, J)$ consisting of a Riemannian metric $g$ compatible with two integrable complex structures $I$ and $J$, with K\"ahler forms $\gw_I$ and $\gw_J$ satisfying
\begin{align*}
d^c_I \gw_I = H = - d^c_J \gw_J, \qquad dH = 0.
\end{align*}
A key example of generalized K\"ahler structure occurs for Bismut-flat structures.  In particular, if we fix $G$ a compact Lie group with bi-invariant metric $g$, and let $I$ and $J$ denote left- and right-invariant complex structures compatible with $g$, then the triple $(g, I, J)$ is generalized K\"ahler \cite{GualtieriGKG}.  Furthermore, there always exist infinite dimensional families of local deformations of a given generalized K\"ahler structure using natural classes of Hamiltonian flows \cite{Bischoff, GibsonStreets}.  These families are analogous to K\"ahler classes, and it is natural to seek classification of generalized K\"ahler structures up to equivalence via these deformations.  Given this, it is natural to ask if a left-invariant complex structure $I$ as above can be part of an `exotic' generalized K\"ahler structure, where $J$ does not arise via the construction above, up equivalence by Hamiltonian flow.  We note that already on the torus this question is nontrivial due to the construction of nonstandard orthogonal complex structures on flat tori in complex dimension $n \geq 3$ \cite{BSV}.  

From this perspective, the question of the topology of this space is loosely analogous to questions on the uniqueness of symplectic structures, in particular conjectures on whether the space of symplectic structures on a hyperK\"ahler four-manifold is connected, for which geometric flow methods have been proposed (cf. \cite{DonaldsonMM}).  More generally, the flow method has been employed in various settings to understand the global structure of natural classes of geometric structures (e.g. \cite{BK1, BHH,MarquesPSC}).  It turns out that the pluriclosed flow provides a very natural approach to understanding the nonlinear space of generalized K\"ahler structures.  As shown in \cite{GKRF}, the pluriclosed flow furthermore preserves the generalized K\"ahler condition, when coupled to an evolution equation for one of the complex structures.  This is called \emph{generalized K\"ahler-Ricci flow}, and is expressed in terms of the K\"ahler form $\gw_J$, a one-parameter family $\gb_J \in \gL^{2,0}_J$, and a one-parameter family of complex structures $I$ as
\begin{align*}
\dt \gw_J = - (\rho_B^J)^{1,1}, \qquad \dt \gb_J =&\ - (\rho_B^J)^{2,0}, \qquad \dt I = L_{\theta_I^{\sharp} - \theta_J^{\sharp}} I,
\end{align*}
where $\theta_J = J d^* \gw_J$ is the Lee form of the Hermitian structure $(g, J)$, with $\theta_I$ defined similarly.  In particular, the metrics $\gw_J$ are evolving by pluriclosed flow, and so Theorem \ref{t:mainthm} applies to give a global description of the space of generalized K\"ahler structures on Bismut-flat backgrounds.

\begin{cor} \label{c:GKcor} Let $(M^{2n}, \gw_F, J)$ be a compact Bismut-flat manifold.  If $(g, I, J)$ is a generalized K\"ahler structure on $M$ such that $[\del \gw_J] = [\del \gw_F] \in H^{2,1}_{\delb}$, then the solution to generalized K\"ahler-Ricci flow with this initial condition exists on $[0,\infty)$ and converges to a Bismut-flat structure $(g_{\infty}, I_{\infty}, J)$.  In particular, $I$ is biholomorphic to a complex structure compatible with a Bismut-flat metric.
\end{cor}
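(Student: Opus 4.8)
The plan is to deduce Corollary~\ref{c:GKcor} from Theorem~\ref{t:mainthm} by running the generalized K\"ahler-Ricci flow and reading off its components. First I would observe that under generalized K\"ahler-Ricci flow the pair $(\gw_J, \gb_J)$ evolves \emph{exactly} by pluriclosed flow, so the hypothesis $[\del \gw_J] = [\del \gw_F] \in H^{2,1}_{\delb}$ is precisely the hypothesis of Theorem~\ref{t:mainthm}. Hence the $\gw_J$-component of the flow exists on $[0,\infty)$ and converges to a Bismut-flat metric $\gw_\infty$, and along the way we have uniform $C^\infty$ control of the metrics $g_t$ (equivalently of the Hermitian metrics $G_t$ on $\QQ$) from the a priori estimates established in the proof of Theorem~\ref{t:mainthm}. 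This immediately yields long-time existence of the whole coupled system, since the only obstruction to continuing the flow is degeneration of the metric, which is ruled out; the complex structures $I_t$ remain in a compact family by the bound $\dt I = L_{\theta_I^\sharp - \theta_J^\sharp} I$ together with uniform control of the Lee forms $\theta_{I_t}, \theta_{J_t}$, which follow from the metric estimates.

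Next I would establish convergence of $I_t$. The evolution $\dt I = L_{\theta_{I_t}^\sharp - \theta_{J_t}^\sharp} I$ is an ODE-type (transport) equation for $I$ driven by a time-dependent vector field $X_t = \theta_{I_t}^\sharp - \theta_{J_t}^\sharp$; to get convergence it suffices to show $X_t \to 0$ in $C^\infty$, or at least that $\int_0^\infty \|X_t\|_{C^k}\, dt < \infty$ for all $k$. At the Bismut-flat limiting metric the Lee forms of $I_\infty$ and $J$ coincide (both are the Lee form of the bi-invariant metric, up to the relevant sign conventions), so $X_\infty = 0$; the integrability of $\|X_t\|$ should come from an exponential decay estimate, which is the standard output of the maximum principle / linearization arguments used to prove convergence in Theorem~\ref{t:mainthm} — indeed the convergence of $\gw_t$ there is presumably obtained with a rate, and the same machinery controls $\theta_{J_t}$ and $\theta_{I_t}$. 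Feeding exponential decay of $X_t$ into the transport equation for $I_t$ gives $I_t \to I_\infty$ in $C^\infty$, with $I_\infty$ an integrable complex structure compatible with the Bismut-flat metric $g_\infty$. Because each $I_t$ is obtained from $I = I_0$ by the flow of a (time-dependent) vector field, $I_\infty$ is diffeomorphic, hence biholomorphic, to $I$; this gives the final assertion of the corollary.

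The main obstacle I expect is the convergence of the complex structures $I_t$ rather than of the metrics: Theorem~\ref{t:mainthm} hands us global existence and convergence of the $\gw$-component more or less directly, but the $I$-component is coupled only through the Lee-form difference, and one must verify that the estimates proved for the metric are strong enough to yield \emph{integrable-in-time} (ideally exponential) decay of $\theta_{I_t}^\sharp - \theta_{J_t}^\sharp$. This requires knowing that the convergence in Theorem~\ref{t:mainthm} is not merely $C^0$ or subsequential but holds in $C^\infty$ with a definite rate — which the maximum-principle arguments sketched for Theorem~\ref{t:mainthm} do provide — and then checking that the limit complex structure $I_\infty$ is genuinely $g_\infty$-compatible and integrable, which follows since compatibility and integrability are closed conditions preserved in the limit. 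A secondary technical point is to confirm that the hypothesis $[\del \gw_J] = [\del \gw_F]$ really is the correct cohomological condition for the $\gw_J$-flow, i.e. that it is preserved under generalized K\"ahler-Ricci flow exactly as $[\del \gw_0] = [\del \gw_F]$ is preserved under pluriclosed flow; this is immediate from the fact that $\dt \del \gw_J = - \del (\rho_B^J)^{1,1} = \del\delb(\cdots)$ is $\delb$-exact along the flow, so the class $[\del \gw_J] \in H^{2,1}_{\delb}$ is constant.
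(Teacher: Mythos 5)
Your first half is exactly the paper's: under generalized K\"ahler--Ricci flow the pair $(\gw_J,\gb_J)$ evolves by pluriclosed flow, the class $[\del\gw_J]\in H^{2,1}_{\delb}$ is preserved, and Theorem \ref{t:mainthm} gives global existence and convergence of the metric component with uniform $C^{\infty}$ estimates. The gap is in your argument for convergence of $I_t$. You propose to show that $X_t = \theta_{I_t}^{\sharp}-\theta_{J}^{\sharp}$ decays exponentially, citing a convergence rate for Theorem \ref{t:mainthm}; but the proof of that theorem produces no such rate. The quantitative decay it establishes is $\brs{\gU(G,G_F)}^2_{g,G}\leq C/t$, i.e.\ only a $t^{-1/2}$ bound on $\gU$, which is not integrable in time, and the final convergence of $G_t$ is obtained by a soft two-step argument (subsequential limit, then monotonicity of $\tr_{G_\infty}G$ near that limit), again with no rate. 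More seriously, the condition $X_t\to 0$ is the wrong target: at a stationary generalized K\"ahler structure the evolution $\dt I = L_{X}I$ only forces $L_{X_\infty}I_\infty=0$, i.e.\ $X_\infty$ is real-holomorphic for $I_\infty$, not zero. On the standard Hopf surface with Gualtieri's generalized K\"ahler structure the Lee forms of $I$ and $J$ genuinely differ, so one cannot hope to prove $X_t\to 0$ even at infinity.

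The ingredient you are missing is the Poisson tensor $\gs=\tfrac{1}{2}g^{-1}[I,J]$, which is constant in time along generalized K\"ahler--Ricci flow (Gibson--Streets), together with the identity
\begin{align*}
\dt I = L_{\theta_I^{\sharp}-\theta_J^{\sharp}}I = \rho_B^J\cdot\gs.
\end{align*}
This replaces the transport equation driven by $X_t$ with an evolution driven by $\rho_B^J$ against a \emph{fixed} tensor. Since $G^{-1}\dt G=-S^G_g$ is likewise an algebraic expression in $\rho_B^J$ and the (uniformly controlled, convergent) metric $G_t$, the convergence of $I_t$ is tied directly to the already-established convergence of $G_t$, with no need for any decay of the Lee-form difference. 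The final biholomorphism statement is then obtained from $I_t=\phi_t^*I$ together with the uniform $C^{\infty}$ estimates on $I_t$, rather than from integrability of $\nm{X_t}{C^k}$, which is not available.
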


\noindent As a concrete example, we use Corollary \ref{c:GKcor} to classify all generalized K\"ahler structures on standard Hopf surfaces (cf. Example \ref{e:standardHopf}).

Here is an outline of the rest of the paper.  We begin in \S \ref{s:bck} by recalling background results from Hermitian geometry, and discuss general results on Bismut Hermitian-Einstein metrics.  In \S \ref{s:PCmetrics} we expound upon ideas of Bismut (cf. \cite{Bismut} \S IIb), describing pluriclosed metrics in a fixed Aeppli class on $M$ in terms of generalized Hermitian metrics $G$ on the canonically associated holomorphic Courant algebroid, with twisted $\delb$-operator.  Furthermore, we derive explicit formulas relating the Chern curvature of $G$ to the Bismut curvature tensor of $g$.  This provides an explicit connection to Hermitian-Yang-Mills theory, and we use this in \S \ref{s:slopestability} to provide slope-stability obstructions to the existence of Bismut Hermitian-Einstein metrics, and the existence of manifolds with $c_1 = 0$ which do not admit Bismut Hermitian-Einstein metrics.  We turn in \S \ref{s:PCF} to the implications for pluriclosed flow.  We first observe an extension of the Schwarz Lemma to the setting of holomorphic Courant algebroids.  Using this, we obtain natural parabolic differential inequalities for the generalized Hermitian metric along solutions to pluriclosed flow, leading then to the proofs of the global existence and convergence results.

\vskip 0.1in 

\noindent \textbf{Acknowledgements:} We thank Ra\'ul Gonz\'alez Molina and Dan Popovici for useful discussions.

\section{Background} \label{s:bck}
\subsection{Pluriclosed metrics}

Let $(M^{2n},J)$ be a complex manifold. A Riemannian metric on $(M,J)$ is Hermitian if $g(JX, JY) = g(X, Y)$ for $X, Y \in TM$. Given a Hermitian metric $g$ we define the associated K\"ahler form $\omega \in \Lambda^{1,1}_{\mathbb R}$ by
\begin{align*}
\omega(X, Y) = g(J X, Y).
\end{align*}
A Hermitian metric $g$ on $(M,J)$ is called \emph{pluriclosed} if
$$
2\i \del\delb \gw = dd^c \omega = 0,
$$
where $d^c = \i (\delb - \del)$ is the conjugate differential, and in particular $d^c \gw = - d \gw(J, J, J)$.  Notice that a pluriclosed Hermitian metric canonically defines a pair of cohomological classes
$$
[\omega] \in H^{1,1}_A, \qquad [\partial\omega] \in H^{2,1}_{\overline{\partial}}
$$
which play a significant role in the present work. Here $H^{p,q}_{\overline{\partial}}$ and $H^{p,q}_A$ denote the Dolbeault and Aeppli cohomology groups of $(M,J)$, respectively, the latter being defined by
\begin{align*}
H^{p,q}_{A} := \frac{\left\{ \Ker \del\delb : 
\Lambda^{p,q} \to \Lambda^{p+1,q+1}\right\}}{\left\{\del 
\alpha + \delb \gamma\ |\ \ga + \gamma \in \Lambda^{p-1,q} \oplus \Lambda^{p,q-1} \right\}}.
\end{align*}
A Hermitian manifold $(M,g,J)$ has a canonical one-parameter family of Hermitian connections \cite{Gauduchonconn}, i.e. connections for which $g$ and $J$ are parallel.  This line of canonical connections is uniquely determined by the following two classical connections.

\begin{defn} \label{d:Chernconn} Let $(M^{2n}, g, J)$ be a Hermitian manifold.  Let $\N$ denote the Levi-Civita connection associated to $g$.  The \emph{Chern connection} is defined by
\begin{align*}
\IP{\N^C_X Y, Z} = \IP{\N_X Y, Z} - \tfrac{1}{2} d \gw (JX, Y, Z).
\end{align*}
The \emph{Bismut connection} is defined by
\begin{align*}
\IP{\N^B_X Y, Z} = \IP{\N_X Y, Z} - \tfrac{1}{2} d^c \gw(X,Y,Z).
\end{align*}
\end{defn}

Given a Hermitian connection $\nabla'$ on $(M^{2n}, g, J)$, we let $R^{\nabla'}$ denote the associated $(3,1)$-curvature tensor. That is,
\begin{align*}
R^{\N'}(X,Y)Z =&\ \N'_X \left(\N'_Y Z \right) - \N'_Y \left(\N'_X Z \right) - \N'_{[X,Y]} Z.
\end{align*}
We furthermore adopt the standard notation for the $(4,0)$-curvature tensor,
\begin{align*}
R^{\N'}(X,Y,Z,W) =&\ \IP{R^{\N'}(X,Y)Z,W}.
\end{align*}

\begin{defn} \label{d:Riccidef} Let $(M^{2n}, g, J)$ be a Hermitian manifold endowed with the Bismut connection $\N^B$, and set $R^B = R^{\N^B}$.  Define the \emph{first Ricci curvature} $\rho_B$ via
\begin{align*}
\rho_B(X,Y) := \tfrac{1}{2} \IP{ R^{B}(X,Y) J e_i, e_i},
\end{align*}
where $\{e_i\}$ is any orthonormal basis for the tangent space at a given point.
\end{defn}

To finish this section, we recall a classical curvature identity which is crucial to our discussion.  In particular, there is a relationship between the curvature tensor of the Bismut connection $\nabla^B$, and that of the metric compatible connection $\N^-$, defined by
\begin{align}\label{eq:Nablaminus}
\IP{\N^-_X Y, Z} = \IP{\N_X Y, Z} + \tfrac{1}{2} d^c \gw(X,Y,Z).
\end{align}

\begin{prop} \label{p:curvatureflip} (cf. \cite[Theorem 1.6]{Bismut})  \label{p:Bismutpair} Let $(M^{2n}, g, J)$ be a pluriclosed manifold.  Then, setting $R^- = R^{\N^-}$, one has
\begin{align*}
R^B(X,Y,Z,W) = R^-(Z,W,X,Y).
\end{align*}
\end{prop}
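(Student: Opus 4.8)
The plan is to exploit the fact that both $\N^B$ and $\N^-$ differ from the Levi-Civita connection $\N$ by a multiple of the torsion three-form $H = -d^c\gw$, and that the pluriclosed condition $dH = 0$ makes $H$ behave like a ``harmonic'' perturbation. Writing $A_X = \tfrac{1}{2} g^{-1} H(X, \cdot, \cdot)$ so that $\N^B = \N + A$ and $\N^- = \N - A$, I would first expand the curvature tensors $R^B$ and $R^-$ via the standard formula
\begin{align*}
R^{\N \pm A}(X,Y)Z = R^\N(X,Y)Z \pm (\N_X A)_Y Z \mp (\N_Y A)_X Z + [A_X, A_Y]Z \pm A_{[X,Y]}Z \mp \cdots,
\end{align*}
being careful with the torsion correction terms. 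The key structural inputs are: (i) $A_X$ is skew-symmetric with respect to $g$ (since $H$ is a three-form), so the quadratic terms $[A_X, A_Y]$ contribute symmetrically; and (ii) the pluriclosed condition $dH = 0$, which controls the alternation of $\N_\bullet H$ and is precisely what is needed to identify the linear (derivative-of-torsion) terms correctly.

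The heart of the argument is a careful bookkeeping of the first Bianchi-type identities. Since $\N$ is torsion-free, $R^\N$ satisfies the pair symmetry $R^\N(X,Y,Z,W) = R^\N(Z,W,X,Y)$ automatically, so one must show that all the correction terms assemble so that the left side evaluated on $(X,Y,Z,W)$ equals the right side evaluated on $(Z,W,X,Y)$. I would organize this by separating the $(4,0)$-tensors $R^B$ and $R^-$ into their pieces that are symmetric versus antisymmetric under swapping the first and second pairs of slots, and show that the antisymmetric parts of $R^B$ and $R^-$ are negatives of one another while the symmetric parts agree — using $dH = 0$ to handle the $\N H$ terms and the three-form symmetry of $H$ for the $H * H$ terms. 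Concretely, $R^B(X,Y,Z,W) - R^-(Z,W,X,Y)$ should collapse to an expression built purely out of $dH$ and hence vanish.

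Alternatively — and this may be cleaner to write — I would invoke the known general identity for metric connections with skew torsion relating $R^{\N+A}$ and $R^{\N-A}$ (see \cite{AgricolaFriedrich}), which states that for the connections $\N^\pm$ with torsion $\pm H$ one has $R^{\N+A}(X,Y,Z,W) = R^{\N-A}(Z,W,X,Y) + (\text{terms in } dH)$, and then specialize to the pluriclosed case where $dH = 0$ kills the error term. Since the paper cites \cite{Bismut} for this proposition, the intended proof is presumably the computation in Bismut's original Theorem 1.6, specialized and reorganized; I would follow that route, presenting the expansion and grouping of terms.

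The main obstacle is purely organizational rather than conceptual: tracking the torsion-correction terms in the curvature expansion (there are terms of the form $\N A$, $[A,A]$, and the $A_{[X,Y]}$ pieces that interact with the torsion of $\N^\pm$ itself) and verifying that precisely the combination $dH$ emerges as the discrepancy. One must be vigilant about sign conventions for $d^c$, for $H = -d^c\gw$, and for the curvature tensor, since the claimed identity is an exact equality with no error term. I expect that with a disciplined choice of normal coordinates or a local orthonormal frame with $\N e_i = 0$ at a point, the computation reduces to checking a short identity among three-form contractions, at which point the pluriclosed hypothesis closes the argument.
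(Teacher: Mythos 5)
Your proposal is correct and follows exactly the route the paper intends: the paper gives no proof of its own but simply cites Bismut's Theorem 1.6, whose argument is precisely the expansion of $R^{\N \pm A}$ with $A_X = \tfrac{1}{2}g^{-1}H(X,\cdot,\cdot)$, showing that $R^B(X,Y,Z,W) - R^-(Z,W,X,Y)$ is a multiple of $dH(X,Y,Z,W)$ and hence vanishes under the pluriclosed hypothesis $dH = -dd^c\gw = 0$. The same identity with the explicit $dH$ error term appears in the cited references, so your plan (and in particular your fallback of quoting the general skew-torsion curvature identity and specializing to $dH=0$) is exactly the intended proof.
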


\subsection{Bismut Hermitian-Einstein metrics}

In this subsection we record some fundamental points regarding Bismut Hermitian-Einstein metrics.  We begin by formally stating the definition.

\begin{defn} Given a complex manifold $(M^{2n}, J)$, a pluriclosed metric $g$ on $M$ is \emph{Bismut Hermitian-Einstein} if
\begin{align*}
\rho_B \equiv 0.
\end{align*}
\end{defn}

To better understand this condition, we first give a reformulation in terms of the Riemannian Ricci tensor, using an identity for the Bismut curvature.

\begin{prop} \label{p:BismutRicci} \cite[Proposition 8.10]{garciafernStreets2020} Let $(M^{2n}, g, J)$ be a pluriclosed manifold, with $H = - d^c \gw$, and $\theta = - d^* \omega \circ J$ is the Lee form.  Then
\begin{gather*}
 \begin{split}
  \rho^{1,1}_B(\cdot, J \cdot) =&\ \Rc - \tfrac{1}{4} H^2 + \tfrac{1}{2} L_{\theta^{\sharp}} g,\\
  \rho_B^{2,0 + 0,2}(\cdot, J \cdot) =&\ - \tfrac{1}{2} d^* H + \tfrac{1}{2} d \theta - \tfrac{1}{2} i_{\theta^{\sharp}} H.
 \end{split}
\end{gather*}
\end{prop}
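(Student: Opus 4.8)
The identity is pointwise and tensorial, so the plan is a direct computation. The starting point combines Definition \ref{d:Riccidef} with the curvature-flip identity of Proposition \ref{p:curvatureflip}: since $\rho_B(X,Y) = \tfrac{1}{2} R^B(X,Y, J e_i, e_i)$ (summed over an orthonormal frame) and $R^B(X,Y,Z,W) = R^-(Z,W,X,Y)$, one obtains
\begin{align*}
\rho_B(X,Y) = \tfrac{1}{2} \sum_i R^-(J e_i, e_i, X, Y),
\end{align*}
so $\rho_B$ is, up to a constant, the $\gw$-trace of the $\End(TM)$-valued curvature two-form of the metric connection $\N^-$. By \eqref{eq:Nablaminus} and $H = -d^c \gw$ we have $\N^- = \N - \tfrac{1}{2} g^{-1} H$, and the advantage of passing to $\N^-$ is that the trace now falls on the two-form slots, where the Bianchi identities are directly available.

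I would then expand the curvature of $\N^- = \N - \tfrac{1}{2} g^{-1} H$ in terms of Levi-Civita data. Writing the difference tensor as $A_X := -\tfrac{1}{2} g^{-1} H(X, \cdot, \cdot) \in \End(TM)$, a standard computation (using that $\N$ is torsion free) gives
\begin{align*}
R^-(X,Y)Z = R^{\N}(X,Y)Z + (\N_X A)_Y Z - (\N_Y A)_X Z + [A_X, A_Y] Z.
\end{align*}
Substituting this into the trace above splits $\rho_B$ into three contributions: a purely Riemannian term $\tfrac{1}{2} \sum_i R^{\N}(J e_i, e_i, X, Y)$, a term linear in $\N H$, and a term quadratic in $H$.

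For the Riemannian term, the first Bianchi identity together with the pair symmetry $R^{\N}(A,B,C,D) = R^{\N}(C,D,A,B)$ reduces $\sum_i R^{\N}(J e_i, e_i, X, Y)$ to a Hermitian trace of $R^{\N}$, which agrees with $\Rc(J\cdot, \cdot)$ up to terms involving $\N J$; for a pluriclosed metric $\N J$ is itself controlled by $H$, so these corrections combine with the torsion pieces. The linear term is where the pluriclosed hypothesis is essential: using $dH = 0$ to rewrite the cyclic part of $\N H$, the divergence-type contraction $\sum_i (\N_{e_i} H)(e_i, \cdot, \cdot)$ produces $d^* H$, while the contractions in which the $J$-trace lands on an $H$-slot produce the Lee form $\theta$ (recall that $\theta^\sharp$ is, up to sign, the $\gw$-trace of $H$), and one more covariant derivative of this yields both $d\theta$ and the symmetric piece $\tfrac{1}{2} L_{\theta^\sharp} g$. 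The quadratic term $[A_X, A_Y]$, once $\gw$-traced, yields $-\tfrac{1}{4} H^2$ together with an $i_{\theta^\sharp} H$ contribution arising from the factor in which one copy of $H$ is $\gw$-traced to $\theta$.

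To conclude I would collect terms and split according to the involution $(X,Y) \mapsto (Y,X)$: for a real $(1,1)$-form $\eta$ the pairing $\eta(\cdot, J\cdot)$ is symmetric, while for a real $(2,0) + (0,2)$-form it is skew, so the symmetric output $\Rc - \tfrac{1}{4} H^2 + \tfrac{1}{2} L_{\theta^\sharp} g$ must equal $\rho_B^{1,1}(\cdot, J\cdot)$ and the skew output $-\tfrac{1}{2} d^* H + \tfrac{1}{2} d\theta - \tfrac{1}{2} i_{\theta^\sharp} H$ must equal $\rho_B^{2,0+0,2}(\cdot, J\cdot)$, which is the assertion. The genuine difficulty is computational bookkeeping: pinning down the sign conventions for $H = -d^c \gw$ and for $d^*$ on three-forms, matching each of the several index contractions of $\N H$ and of $H \cdot H$ to exactly one of the terms $d^* H$, $d\theta$, $i_{\theta^\sharp} H$, $L_{\theta^\sharp} g$, and --- the subtle point --- tracking the non-K\"ahler terms $\N J \neq 0$, whose contributions are precisely what turn the naive answer $\Rc$ into $\Rc - \tfrac{1}{4} H^2 + \tfrac{1}{2} L_{\theta^\sharp} g$.
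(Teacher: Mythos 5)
The paper does not actually prove this proposition: it is quoted verbatim from \cite[Proposition 8.10]{garciafernStreets2020}, so there is no in-paper argument to compare against. Judged on its own, your outline is a sound version of the standard direct computation, and every structural claim you make checks out: the reduction $\rho_B(X,Y)=\tfrac12\sum_i R^-(Je_i,e_i,X,Y)$ is exactly the identity the paper itself uses in the proof of Proposition \ref{p:SecondGChernCurvature}; the expansion of $R^-$ for $\N^-=\N-\tfrac12 g^{-1}H$ is correct (the torsion term $A(T(X,Y))$ drops because $\N$ is torsion-free); and the final symmetry splitting is legitimate, since $\eta(\cdot,J\cdot)$ is symmetric for real $(1,1)$-forms and skew for real $(2,0)+(0,2)$-forms, while the two right-hand sides are respectively symmetric and skew, so the full identity for $\rho_B(\cdot,J\cdot)$ forces both displayed equations at once.

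Two caveats on where your ``bookkeeping'' conceals the real work. First, the curvature flip buys you nothing for the Riemannian term: pair symmetry of $R^{\N}$ sends $\sum_i R^{\N}(Je_i,e_i,X,Y)$ right back to $\sum_i R^{\N}(X,Y,Je_i,e_i)$, which is a $*$-Ricci-type contraction, not $\Rc(JX,Y)$; closing that gap requires the first and second Bianchi identities together with the Kobayashi--Nomizu formula expressing $\N J$ through $d\gw$ (hence $H$), and this is precisely where the $-\tfrac14 H^2$ and Lee-form corrections are generated. You acknowledge this but do not carry it out. Second, the traced commutator $\sum_i[A_{Je_i},A_{e_i}]$ produces contractions of the form $\sum_{i,j}H(Je_i,e_j,\cdot)H(e_i,e_j,\cdot)$, with a $J$ lodged inside one copy of $H$; identifying this with $H^2(X,Y)=\IP{i_XH,i_YH}$ plus an $i_{\theta^\sharp}H$ remainder requires using that $H\in\Lambda^{2,1}+\Lambda^{1,2}$ (no $(3,0)+(0,3)$ part), which is an input you never name. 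Neither point is an error --- the strategy goes through --- but as written the proposal is a plan for a proof rather than a proof, and these are the two steps where a reader could not reconstruct the computation from what you have said.
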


From this proposition it follows that Bismut Hermitian-Einstein structures satisfy a natural coupling of the classic Einstein equation and the equation for a harmonic three-form.  Moreover, it follows that they are automatically \emph{steady generalized Ricci solitons} (cf. \cite[Ch. 4, Proposition 8.14]{garciafernStreets2020}).  We next record a further consequence arising from the variational theory of the generalized scalar curvature functional (Perelman $\FF$-functional for generalized Ricci flow).

\begin{prop} \label{p:solitons} Suppose $(M^{2n}, J)$ is a compact complex manifold, and $g$ is a pluriclosed Bismut Hermitian-Einstein metric.  Let $u = e^{-f/2} \in C^{\infty}(M)$ be the first eigenfunction of the Schr\"odinger operator $-4 \gD + R - \tfrac{1}{12} \brs{H}^2$.  Then
\begin{gather} \label{f:solitons}
\begin{split}
\Rc - \tfrac{1}{4} H^2 + \N^2 f =&\ 0\\
d^*H + i_{\N f} H =&\ 0.
\end{split}
\end{gather}
Consequently, the vector fields $V = \tfrac{1}{2} \left( \theta^{\sharp} - \N f \right)$ and $IV$ are both Killing and holomorphic and $V$ satisfies
\begin{gather}\label{f:genisom}
\tfrac{1}{2} d \theta - i_V H = 0.
\end{gather}
Furthermore, if $V$ vanishes identically then $g$ is K\"ahler Ricci-flat.
\begin{proof} 
Given $g$ as in the statement, define $H = - d^c \gw$ and $\theta = - d^* \omega \circ J$ as above.  Let $(g_t, H_t)$ denote the solution to generalized Ricci flow with initial data $(g, H)$.  By the identities of Proposition \ref{p:BismutRicci}, one has $(g_t, H_t) = (\phi_t^* g, \phi_t^* H)$, where $\phi_t$ is the one-parameter family of diffeomorphisms generated by $\theta^{\sharp}$.  Due to the monotonicity formula for the $\FF$ functional along generalized Ricci flow (cf. \cite[Proposition 6.8, Corollary 6.11]{garciafernStreets2020}), the existence of $f$ satisfying \eqref{f:solitons} as claimed follows. Combining now equation \eqref{f:solitons} with Proposition \ref{p:BismutRicci}, $V$ must be Killing and furthermore it satisfies \eqref{f:genisom}. Arguing now as in the proof of \cite[Proposition 4.1]{SU2}, the second equation in \eqref{f:solitons} implies
$$
(L_V\omega)^{2,0} = 0 = (L_{IV} \omega)^{1,1}.
$$
Using that $V$ is Killing, it follows that $L_V I = 0$, and hence $L_{IV} \omega = (L_{IV} \omega)^{1,1} = 0$. Thus, since $IV$ is also holomorphic, we conclude that $L_{IV}g = 0$. 

Finally, if $V$ vanishes identically then one has $\theta = -df$. Since $g$ is pluriclosed, if we assume $n >2$ it follows from \cite[Theorem 1.3]{IvanovPapadopoulos} that $g$ is K\"ahler. In the case $n = 2$, by the conformal transformation law for the Lee form the conformally related Hermitian metric $e^fg$ is K\"ahler. But by the uniqueness of the Gauduchon metric in a fixed conformal class, it follows that, after possibly modifying $f$ by a constant, $g = e^f g$ and so $f \equiv 0$. Thus for any choice of $n$ the metric $g$ is K\"ahler, and Ricci flat.

\end{proof}
\end{prop}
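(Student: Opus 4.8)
The plan is to run generalized Ricci flow starting from the Bismut Hermitian-Einstein data, observe that by Proposition \ref{p:BismutRicci} the flow moves only by diffeomorphism, and then extract the soliton potential $f$ from Perelman's $\FF$-functional monotonicity. First I would set $H = -d^c\gw$ and $\theta = -d^*\gw \circ J$. The second identity of Proposition \ref{p:BismutRicci}, together with $\rho_B \equiv 0$, gives $d^*H - d\theta + i_{\theta^\sharp}H = 0$, while the first gives $\Rc - \tfrac14 H^2 + \tfrac12 L_{\theta^\sharp}g = 0$; combining these two facts with the standard computation $L_{\theta^\sharp}H = d\, i_{\theta^\sharp}H + i_{\theta^\sharp}dH = d\,i_{\theta^\sharp}H$ (using $dH = 0$) shows that the pair $(g,H)$ evolves under generalized Ricci flow purely by pullback along the flow $\phi_t$ of $\theta^\sharp$. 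Next I would invoke the monotonicity of the $\FF$-functional along generalized Ricci flow and its critical-point characterization (the cited Proposition 6.8 / Corollary 6.11 of \cite{garciafernStreets2020}): because the flow is a gradient flow for $\FF$ and here it is stationary modulo diffeomorphism, the minimizer $u = e^{-f/2}$ of the Schr\"odinger operator must in fact produce an $f$ solving the steady soliton system \eqref{f:solitons}. The existence of $f$ satisfying \eqref{f:solitons} then follows directly.

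With \eqref{f:solitons} in hand, I would compare it term-by-term with Proposition \ref{p:BismutRicci} applied to our metric. Subtracting the Ricci-type equations gives $\N^2 f = \tfrac12 L_{\theta^\sharp}g$, i.e. $V = \tfrac12(\theta^\sharp - \N f)$ is a Killing field; subtracting the three-form equations gives $\tfrac12 d\theta - i_V H = 0$, which is \eqref{f:genisom}. To upgrade $V$ to a holomorphic Killing field and handle $IV$, I would follow the argument in \cite[Proposition 4.1]{SU2}: the relation $d^*H + i_{\N f}H = 0$ combined with the pluriclosed condition is designed to force $(L_V\gw)^{2,0} = 0$ and $(L_{IV}\gw)^{1,1} = 0$; since $V$ is Killing it commutes with the metric, and the vanishing of the $(2,0)$-part of $L_V\gw$ forces $L_V J = 0$, hence $L_{IV}\gw = (L_{IV}\gw)^{1,1} = 0$, and then $L_{IV}g = 0$ since $IV$ is holomorphic and preserves $\gw$. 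This gives the holomorphicity and Killing property of both $V$ and $IV$.

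Finally, for the degenerate case $V \equiv 0$: then $\theta^\sharp = \N f$, i.e. $\theta = df$ is exact. For $n > 2$ I would cite \cite[Theorem 1.3]{IvanovPapadopoulos}, which says a pluriclosed metric with exact Lee form is K\"ahler. For $n = 2$ the conformal transformation law $\theta_{e^f g} = \theta_g - (n-1)df = \theta_g - df = 0$ shows $e^f g$ is balanced hence (in complex dimension two) K\"ahler, and uniqueness of the Gauduchon representative in the conformal class $\{e^c g : c \in \mathbb R\}$ forces $e^f g = g$ up to a constant, so $f$ is constant and $g$ itself is K\"ahler. Once $g$ is K\"ahler, $H = 0$ and $\theta = 0$, so \eqref{f:solitons} collapses to $\Rc + \N^2 f = 0$ with $\N f = \theta^\sharp = 0$, giving $\Rc = 0$. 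The main obstacle I anticipate is not any single estimate but assembling the $\FF$-functional input correctly: one must be careful that the stationarity of generalized Ricci flow modulo diffeomorphism genuinely yields a \emph{gradient} steady soliton (the fact that the generating vector field is a gradient $\N f$, not merely some Killing field) — this is exactly what the eigenfunction characterization of the $\FF$-minimizer provides, and the rest is bookkeeping against Proposition \ref{p:BismutRicci}.
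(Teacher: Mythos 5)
Your proposal is correct and follows essentially the same route as the paper's proof: run generalized Ricci flow, observe via Proposition \ref{p:BismutRicci} that the Bismut Hermitian--Einstein condition makes the flow evolve by pullback along $\theta^{\sharp}$, extract $f$ from the $\FF$-functional monotonicity, compare \eqref{f:solitons} term-by-term with Proposition \ref{p:BismutRicci} to get the Killing property and \eqref{f:genisom}, invoke the argument of \cite[Proposition 4.1]{SU2} for holomorphicity of $V$ and $IV$, and treat the degenerate case via \cite[Theorem 1.3]{IvanovPapadopoulos} for $n>2$ and the Gauduchon-uniqueness/conformal argument for $n=2$. The extra details you supply (the Cartan-formula check that $L_{\theta^{\sharp}}H = d\,i_{\theta^{\sharp}}H$ matches the flow of $H$, and the explicit conformal transformation law for the Lee form) are consistent with and only elaborate on the paper's argument.
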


As noted in the introduction, the system of equations (\ref{f:solitons}) are the same as the equations of motion in compactifications of type II supergravity \cite{Friedanetal}.  Classification results for solutions to this more general soliton system on complex surfaces have appeared recently in \cite{Streetssolitons, SU1}, with a new class of complete solutions arising from an extension of the Gibbons-Hawking ansatz recently appearing \cite{SU2}.  The starting point is the observation that the vector field $V = \tfrac{1}{2} \left( \theta^{\sharp} - \N f \right)$ is real holomorphic, and $JV$ is Killing (cf. \cite[Proposition 4.1]{SU2}). It is interesting to notice that the Killing condition $L_V g = 0$ combined with \eqref{f:genisom} yields a \emph{generalized isometry} on an exact Courant algebroid determined by  the torsion $H$ (cf. \cite[Proposition 2.53]{garciafernStreets2020}).

\subsection{Bismut-flat structures}

An important class of Bismut Hermitian-Einstein structures occurs when the associated Bismut connection is actually flat. As a matter of fact, Bismut-flat manifolds are the only known examples of non-K\"ahler Bismut Hermitian-Einstein metrics to the present day. In this section we recall fundamental results on the structure of compact pluriclosed Bismut-flat manifolds. We will also provide concrete examples where Theorem \ref{t:mainthm} and Corollary \ref{c:GKcor} apply. To begin we recall two basic examples, namely the standard Hopf surfaces, and Calabi-Eckmann threefolds.

\begin{ex} \label{e:standardHopf} Given complex numbers $\ga, \gb$ satisfying $\brs{\ga} \leq \brs{\gb} < 1$, we obtain a \emph{Hopf surface} via
\begin{align*}
M_{\ga\gb} = \mathbb C^2 \backslash  \{\mathbf{0}\} / \IP{ (z_1,z_2) \to (\ga z_1, \gb z_2)}.
\end{align*}
For any $\ga, \gb$ this is a complex manifold diffeomorphic to $S^3 \times S^1$.  We furthermore say that it is \emph{standard} if $\brs{\ga} = \brs{\gb}$.  In this case there is a natural pluriclosed metric known as the Hopf/Boothby metric defined by the invariant K\"ahler form on $\mathbb C^2 \backslash \{\mathbf{0}\}$ by
\begin{align*}
\gw_{\mbox{\tiny{Hopf}}} = \frac{\i}{\brs{z}^2} \left( dz_1 \wedge d \bar{z}_1 + d z_2 \wedge d \bar{z}_2 \right).
\end{align*}
This metric is the unique (up to scaling) bi-invariant Hermitian metric for $S^3 \times S^1 \cong SU(2) \times U(1)$, and the associated Bismut connection is flat.  Recall also that $h^{1,1}_{A}(M_{\ga\gb}) = 1$ (see the proof of \cite[Theorem 3.3]{AngellaTomassini}) and that the Aeppli class of a pluriclosed Hermitian metric on a compact complex surface is non-zero (see e.g. \cite{PiovaniTomassini}). Therefore, given any pluriclosed metric $\omega$ on $M_{\ga\gb}$ one has $[\omega] = \lambda [\gw_{\mbox{\tiny{Hopf}}}]$ for $0 < \lambda \in \mathbb{R}$, where the positivity of $\lambda$ follows e.g. by integration on a holomorphic curve (cf. below). Using now that $[\partial \omega] \in H^{2,1}_{\overline{\partial}}$ factorizes through the natural map
$$
H^{1,1}_A \to H^{2,1}_{\overline{\partial}} \colon [\omega] \mapsto [\partial \omega],
$$
we have that Theorem \ref{t:mainthm} applies giving convergence of the pluriclosed flow to a Bismut-flat structure for any initial data.  Due to the classification by Gauduchon-Ivanov \cite{GauduchonIvanov} (cf. also \cite[Theorem 8.26]{garciafernStreets2020}), it follows that $\gw_{\mbox{\tiny{Hopf}}}$ is the unique Bismut-flat metric in its Aeppli class, thus the limiting metric is a scalar multiple of the Hopf metric.  

The Hopf metric above is part of two distinct types generalized K\"ahler structures. In the first case, the second compatible complex structure $I$ is obtained by modifying the complex structure on $\mathbb{C}^2$ such that $(z_1, \overline{z_2})$ are holomorphic coordinates. Note that, so defined, $I$ is also Hermitian with respect to $g_{\mbox{\tiny{Hopf}}} = \gw_{\mbox{\tiny{Hopf}}}(,J)$. Furthermore, $I$ and $J$ have opposite orientations and they satisfy $[I,J] = 0$. The possible generalized K\"ahler structures of the form $(g_{\mbox{\tiny{Hopf}}}, I, J)$ where $I$ and $J$ induce opposite orientations are classified in \cite[Theorem 3]{ApostolovGualtieri}. In the second case \cite{GualtieriGKG}, consider the compatible complex structure $I'$ defined by the  holomorphic $(2,0)$-form
$$
\Omega_{I'} = (\overline{z}_1dz_1 + z_2 d\overline{z}_2) \wedge (\overline{z}_1dz_2 - z_2 d\overline{z}_1).
$$
Then $(g_{\mbox{\tiny{Hopf}}},I',J)$ is a generalized K\"ahler structure whose associated Poisson tensor $\gs = \tfrac{1}{2}[I,J]$ is generically nondegenerate but degenerates along the pair of elliptic curves $\{z_1 = 0\}, \{z_2 = 0\}$.  The possible generalized K\"ahler structures of the form $(g_{\mbox{\tiny{Hopf}}}, I, J)$ where $I$ and $J$ induce the same orientation form a finite dimensional family and are classified by twistor constructions in \cite{GauduchonWeyl} (cf. also \cite{PontecorvoCS}).

Now suppose we have an arbitrary generalized K\"ahler structure $(M, g, I, J)$ where $(M, J) \cong M_{\ga\gb}$.
Applying now Corollary \ref{c:GKcor} and the discussion above, the generalized K\"ahler Ricci flow with initial data $(g,I,J)$ exists for all times and converges to a generalized K\"ahler structure $(g_{\mbox{\tiny{Hopf}}},I_{\infty},J)$ with $I_\infty$ biholomorphic to $I$.  Thus the generalized K\"ahler-Ricci flow deforms all generalized K\"ahler structures on $M_{\ga\gb}$ to the space of GK structures of the form $(g_{\mbox{\tiny{Hopf}}}, I, J)$. Therefore, a classification of generalized K\"ahler structures of the form $(g, I, J)$ where $(M, J) \cong M_{\ga\gb}$ follows from \cite{ApostolovGualtieri,GauduchonWeyl}.
\end{ex}

\begin{ex} \label{e:CCE}
Consider $\mathbb C^2 \backslash \{0\} \times \mathbb C^2 \backslash \{0\}$ endowed with the $\mathbb C$-action given by
\begin{align*}
\gg(z, w) =  (e^{\gg} z, e^{\sqrt{-1} \gg} w).
\end{align*}
This action is free and proper, the quotient space $(M,J)$ is diffeomorphic to $S^{3} \times S^{3}$, and this is an example of a Calabi-Eckmann space.  Furthermore $M$ is the total space of a holomorphic $T^2$ fibration over $\mathbb {CP}^1 \times \mathbb {CP}^1$.  This is the product of the standard Hopf fibrations $\pi_i : S^3 \to \mathbb{CP}^1$.  Let $\xi_i$ denote the canonical vector fields associated to this fibration on the two factors, with $\mu_i$ the associated canonical connections satisfying $d \mu_i = \pi_i^* \gw_{FS}$.  Note that the complex structure further satisfies $J \xi_1 = \xi_2$.  We consider the K\"ahler form associated to the product of round metrics, namely
\begin{align*}
\gw = \pi_1^* \gw_{FS} + \pi_2^* \gw_{FS} + \mu_1 \wedge \mu_2.
\end{align*}
It follows that
\begin{align*}
d \gw =&\ d \mu_1 \wedge \mu_2 - \mu_1 \wedge d \mu_2 = \pi_1^* \gw_{FS} \wedge \mu_2 - \mu_1 \wedge \pi_2^* \gw_{FS},
\end{align*}
hence
\begin{align*}
H = - d^c \gw =&\ d \gw(J, J, J) = \pi_1^* \gw_{FS} \wedge \mu_1 - \pi_2^* \gw_{FS} \wedge \mu_2.
\end{align*}
It is clear then that the geometric structure is that of a product of two copies of the $S^3$ factor of the Hopf metric, and furthermore that $d H = 0$. It can be checked that $(M,J,\omega)$ is indeed Bismut-flat, and that $\theta^{\sharp}$, where $\theta = -\tfrac{1}{2} (\mu_1 + \mu_2)$, is a Killing field which further preserves $H$. We refer to \cite[Example 8.35]{garciafernStreets2020} for further details on the geometry.

For the Calabi-Eckmann threefold one has $h^{1,1}_{A} = 1$ (see the proof of \cite[Proposition 3.4]{AngellaTomassini}). Furthermore, it follows by integrating over the $T^2$ fibers that for an arbitrary pluriclosed metric on $M$ one has
$$
0 \neq [\omega'] \in H^{1,1}_A,
$$
and hence there exists $\gl > 0$ so that $[\gw'] = \gl [\gw]$.  Arguing now as in Example \ref{e:standardHopf}, we obtain that
$$
[\partial \omega'] = \lambda [\partial \omega] \in H^{2,1}_{\overline{\partial}},
$$
and hence Theorem \ref{t:mainthm} applies giving global existence and convergence of the pluriclosed flow to a Bismut-flat structure for any initial data.
\end{ex}

These examples are both covered by the general construction of Samelson \cite{Samelson}, which we recall next. Let $G$ denote a compact connected real Lie group of dimension $2n$. We fix a bi-invariant metric $\IP{,}$ on the Lie algebra $\mathfrak{g} := \operatorname{Lie} G$ inducing a bi-invariant metric $g$ on $G$. Any choice of integrable left-invariant complex structure $J_L$ on $G$ compatible with $g$ determines a Bismut-flat structure on $G$, with Bismut torsion given by the Cartan three-form
$$
H(X,Y,Z) = -d^c_{J_L}\omega(X,Y,Z) = g([X,Y],Z),
$$
for any triple of left-invariant vector fields $X,Y,Z$ on $G$. Furthermore, denote by $J_R$ the right-invariant extension of the almost complex structure induced by $J_L$ on $\mathfrak{g}$. Then, the triple $(g,J_L,J_R)$ is a natural generalized K\"ahler structure on $G$ \cite{GualtieriGKG}. In particular, Example \ref{e:standardHopf} can be recovered from this construction for $G = SU(2) \times U(1)$, while Example \ref{e:CCE} can be recovered taking $G = SU(2) \times SU(2)$ (for suitable choices of $J_L$).

To the knowledge of the authors, in general there is no available calculation of the cohomology groups $H^{2,1}_{\overline{\partial}}$ and $H^{1,1}_A$ in the construction of Samelson. Hence our argument in the previous two examples, which grants the global existence of the pluriclosed flow for any initial data, does not apply directly. Note that, for $G$ compact and simply connected, a calculation of the Dolbeault cohomology for an arbitrary left-invariant complex structure was undertaken in \cite[Proposition 5.9]{Pittie}. %In general, one cannot expect $h^{2,1}_{\overline{\partial}} = 1$ to hold for an arbitrary Bismut-flat manifold.  
In order to discuss a pair of concrete examples, we need a more explicit description of Samelson's construction. An integrable left-invariant complex structure on $G$ compatible with $g$ is equivalent to a choice of complex Lie subalgebra 
$$
\mathfrak{s} \subset \mathfrak g^c := \mathfrak g \otimes \mathbb{C}
$$
satisfying
$$
\mathfrak{s} \cap \mathfrak{g} = \{0\}, \qquad \dim_{\mathbb{C}} \mathfrak{s} = n, \qquad \IP{\mathfrak{s},\mathfrak{s}} = 0,
$$
where the last condition means that $\mathfrak{s}$ is isotropic with respect to the $\mathbb{C}$-linear extension of $\IP{,}$. This data is actually equivalent to a choice of maximal torus $T \subset G$ with Lie algebra $\mathfrak{t}$, and a choice of isotropic subspace
$$
\mathfrak{a} \subset \mathfrak{t}^c := \mathfrak t \otimes \mathbb{C}
$$
such that $\mathfrak{a} \cap \mathfrak{t} = \{0\}$ (see e.g. \cite{Pittie}).  

\begin{ex} \
Consider $G = SU(3)$ endowed with the bi-invariant metric $g$ given by minus the Killing form. Let $J_L$ be a $g$-compatible integrable left-invariant complex structure on $G$ with associated maximal torus $T^2 \subset G$ and (isotropic) one-dimensional abelian subalgebra
$$
\mathfrak{a} \subset \mathfrak{t}^c.
$$
Then, there is an isomorphism of bigraded algebras \cite[Section 4]{Pittie}
$$
H^{*,*}_{\overline{\partial}}(G,J_L) \cong \Lambda^* \IP{u_{0,1},u_{2,1}} \otimes \mathbb{C}[u_{1,1}]/\langle u_{1,1}^3 \rangle
$$
where $u_{p,q}$ has bi-degree $(p,q)$ . In particular
$$
H^{2,1}_{\overline{\partial}}(G,J_L) \cong \mathbb{C}\IP{u_{2,1}} \cong \mathbb{C}.
$$
Therefore, Theorem \ref{t:mainthm} applies  giving convergence of the pluriclosed flow to a Bismut-flat structure for any initial data $\omega_0$ such that
\begin{equation}\label{eq:ray}
[\partial \omega_0] \in \mathbb{R}_{>0} \cdot [\partial \omega] \subset \mathbb{C}.
\end{equation}
Similarly, applying Corollary \ref{c:GKcor}, the generalized K\"ahler Ricci flow with such initial data $(g_0,I_0, J_L)$ exists for all times and converges to a generalized K\"ahler structure $(g_\infty,I_{\infty}, J_L)$ with Bismut flat metric $g_\infty$ and $I_\infty$ biholomorphic to $I_0$.
\end{ex}

\begin{ex}\label{ex:SO(9)}
Consider $G = SO(9)$ endowed with the bi-invariant metric $g$ given by minus the Killing form. Choose a maximal torus $T^4 \subset G$ and an orthonormal basis for its Lie algebra
$$
\mathfrak{t} = \langle e_1,e_2,e_3,e_4 \rangle.
$$
Consider the union of twelve lines in $\mathbb{P}(\mathfrak{t}^c)$ defined by the equations
$$
\sum_j x_j^2 = 0, \qquad \sum_j x_j^6 = 0,
$$
where $x = \sum_j x_j e_j \in \mathfrak{t}^c$. A choice of such line determines an isotropic subspace $\mathfrak{a} \subset \mathfrak{t}^c$ with no real points, and hence it defines a $g$-compatible integrable left-invariant complex structure $J_L$ on $G$. Then, there is an isomorphism of bigraded algebras \cite[p. 149]{Pittie}
$$
H^{*,*}_{\overline{\partial}}(G,J_L) \cong \Lambda^* \mathfrak{a}^* \otimes \Lambda^*(u_{2,1},u_{5,6}) \otimes \mathbb{C}[u_{1,1}, v_{1,1}]/\langle u_{1,1}^4 + v_{1,1}^4, u_{1,1}^4 v_{1,1}^4 \rangle
$$
where $u_{p,q}$ has bi-degree $(p,q)$ and the elements in $\mathfrak{a}^*$ have bi-degree $(0,1)$. In particular
$$
H^{2,1}_{\overline{\partial}}(G,J_L) \cong \mathbb{C}\IP{u_{2,1}} \cong \mathbb{C}.
$$
Therefore, Theorem \ref{t:mainthm} and Corollary \ref{c:GKcor} apply also in this example, for any initial data $\omega_0$ such that \eqref{eq:ray} is satisfied.
\end{ex}

%\begin{ex}
%Consider $G = SO(9)$ endowed with $\IP{,}$ as in the previous example. Consider the $g$-compatible left-invariant complex structure $J_L^0$ determined by
%$$
%\mathfrak{a}_0 = \langle e_1 + e_2 + \sqrt{-2}e_3, e_1 - e_2 + \sqrt{-2} e_4 \rangle.
%$$
%Define the commutative bigraded ring
%$$
%R = \mathbb{C}\IP{u_{1,1}, v_{1,1}}/\langle p^2,pq,q^2 \rangle
%$$
%where
%$$
%p = u_{1,1}^2 + v_{1,1}^2, \qquad q = u_{1,1}^2 v_{1,1}^2.
%$$
%Then, there is an isomorphism of bigraded algebras \cite[p. 146]{Pittie}
%$$
%H^{*,*}_{\overline{\partial}}(G,J_L^0) \cong \Lambda^* \mathfrak{a}_0^* \otimes \Lambda^*(u_{2,1}) \otimes \Big{(} R \oplus (R/\langle p,q \rangle)\otimes \mathbb{C}\langle u_{8,7} \rangle \oplus (R/\langle q \rangle)\otimes \mathbb{C}\langle u_{10,9} \rangle \Big{)}
%$$
%where $u_{p,q}$ has bi-degree $(p,q)$ and the elements in $\mathfrak{a}^*$ has bi-degree $(0,1)$. In particular
%$$
%H^{2,1}_{\overline{\partial}}(G,J_L^0) \cong \mathbb{C}\IP{u_{2,1}} \cong \mathbb{C}.
%$$
%Therefore, Theorem \ref{t:mainthm} and Corollary \ref{c:GKcor} apply also in this example. Notice further that $(SO(9),J_L)$ (see Example \ref{ex:SO(9)}) and $(SO(9),J_L^0)$ are not bi-holomorphic, as their Dolbeault cohomologies are different.
%\end{ex}

%
%
%
%
\section{Pluriclosed metrics and holomorphic Courant algebroids} \label{s:PCmetrics}

In this section we show that a pluriclosed Hermitian structure yields a natural notion of Hermitian metric $G$ on a holomorphic Courant algebroid. We then use different methods to rederive a striking relationship--originally observed in \cite{Bismut}--between the curvature of the Chern connection associated to $G$ and the curvature of the Bismut connection associated to the underlying pluriclosed structure. Our results extend the study of metrics on holomorphic Courant algebroids initiated in \cite{garciafern2020gauge,garciafern2018holomorphic,StreetsPCFBI}.

\subsection{Holomorphic Courant algebroids}
Let $(M,J)$ be a complex manifold. We denote by $\mathcal{O}_M$ and $\underline{\mathbb{C}}$ the sheaves of holomorphic functions and $\mathbb{C}$-valued constant functions on $M$, respectively.

\begin{defn}\label{d:CAhol} 
A \emph{holomorphic Courant algebroid} is a holomorphic vector bundle $\mathcal{Q} \to (M,J)$ together with a nondegenerate holomorphic symmetric bilinear form $\IP{,}$, a holomorphic vector bundle morphism $\pi:\mathcal{Q}\to T^{1,0}$ called anchor map, and a Dorfman bracket on holomorphic sections of $\mathcal{Q}$, that is, a homomorphism of sheaves of $\underline{\mathbb{C}}$-modules
$$
[ \cdot,\cdot ] \colon \mathcal{Q} \otimes_{\underline{\mathbb{C}}} \mathcal{Q} \to \mathcal{Q},
$$
satisfying, for $u,v,w\in \QQ$ and $\phi\in \mathcal{O}_M$,
  \begin{itemize}
  \item[(D1):] $[u,[v,w]] = [[u,v],w] + [v,[u,w]]$,
  \item[(D2):] $\pi([u,v])=[\pi(u),\pi(v)]$,
  \item[(D3):] $[u,\phi v] = \pi(u)(\phi) v + \phi[u,v]$,
  \item[(D4):] $\pi(u)\IP{v,w} = \IP{[u,v],w} + \IP{v,[u,w]}$,
  \item[(D5):] $[u,v]+[v,u]=2\pi^* d\IP{u,v}$.
  \end{itemize}
\end{defn}

We will denote a holomorphic Courant algebroid $(\mathcal{Q},\IP{,},[\cdot,\cdot],\pi)$ simply by $\mathcal{Q}$. Using the isomorphism $\IP{,} \colon \mathcal{Q} \to \mathcal{Q}^*$ we obtain a holomorphic sequence
\begin{equation}\label{eq:holCouseqaux}
T^*_{1,0} \overset{\pi^*}{\longrightarrow} \mathcal{Q} \overset{\pi}{\longrightarrow} T^{1,0}.
\end{equation}

\begin{defn}\label{d:CAholexact} 
We will say that $\mathcal{Q}$ is \emph{exact} if \eqref{eq:holCouseqaux} induces an exact sequence of holomorphic vector bundles
\begin{equation}\label{eq:holCouseq}
0 \longrightarrow T^*_{1,0} \overset{\pi^*}{\longrightarrow} \mathcal{Q} \overset{\pi}{\longrightarrow} T^{1,0} \longrightarrow 0.
\end{equation}
\end{defn}

A complete classification of exact holomorphic Courant algebroids was obtained by Gualtieri in \cite{GualtieriGKG}. Let us summarize the result which we will use.

\begin{thm} \label{t:holCourant} 
Let $(M,J)$ be a complex manifold. Denote by $\Lambda^{2,0}_{cl}$ the sheaf of closed $(2,0)$-forms on $M$. Then, the set of isomorphism classes of exact holomorphic Courant algebroids on $M$ is bijective to the first \v Cech cohomology $H^1(\Lambda^{2,0}_{cl})$. Furthermore, there is a vector space isomorphism
\begin{equation}\label{eq:kcohomologyleqk}
H^1(\Lambda^{2,0}_{cl}) \cong \frac{\Ker \; d \colon \Lambda^{3,0 + 2,1}  \to \Lambda^{4,0 + 3,1 + 2,2}}{ \operatorname{Im} \; d \colon \Lambda^{2,0} \to \Lambda^{3,0 + 2,1}}.
\end{equation}
\end{thm}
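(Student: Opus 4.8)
The plan is to establish the two stated bijections separately, starting from the \v Cech-theoretic description and then reducing to the global form via a partition-of-unity / spectral-sequence argument. First I would recall the construction that attaches to an exact holomorphic Courant algebroid a \v Cech class in $H^1(\Lambda^{2,0}_{cl})$: over a cover $\{U_\alpha\}$ trivializing the sequence \eqref{eq:holCouseq}, one picks holomorphic isotropic splittings $s_\alpha\colon T^{1,0}|_{U_\alpha}\to\mathcal Q|_{U_\alpha}$; the failure of two such splittings to agree is controlled, via the anchor and the pairing, by a holomorphic $2$-form $B_{\alpha\beta}\in\Gamma(U_{\alpha\beta},\Lambda^{2,0})$, and axioms (D1)--(D5) force $dB_{\alpha\beta}=0$ and the cocycle condition $B_{\alpha\beta}+B_{\beta\gamma}+B_{\gamma\alpha}=0$. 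Changing the splittings $s_\alpha$ changes $B_{\alpha\beta}$ by a \v Cech coboundary valued in $\Lambda^{2,0}_{cl}$, so the class $[B_{\alpha\beta}]\in H^1(\Lambda^{2,0}_{cl})$ is a well-defined isomorphism invariant. Conversely, given a \v Cech cocycle, one reverses the construction: glue the bundles $\mathcal Q_\alpha := T^{1,0}|_{U_\alpha}\oplus T^*_{1,0}|_{U_\alpha}$ (with the standard anchor, pairing and Dorfman bracket) using transition maps built from the $B_{\alpha\beta}$, and check that the axioms are cover-independent; this gives the inverse map, proving the first bijection. The one point requiring care here is verifying that the splitting-dependence is \emph{exactly} a coboundary in the sheaf $\Lambda^{2,0}_{cl}$ rather than in some larger sheaf, which is where the isotropy condition and axiom (D5) are used.

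For the second isomorphism \eqref{eq:kcohomologyleqk}, the idea is to resolve the sheaf $\Lambda^{2,0}_{cl}$ and identify its $H^1$ with the indicated quotient of global forms. The natural resolution comes from the truncated holomorphic de Rham / Dolbeault double complex: $\Lambda^{2,0}_{cl}$ sits inside $\Lambda^{2,0}$ as the kernel of $d$, and the complex $0\to\Lambda^{2,0}_{cl}\to\Lambda^{2,0}\xrightarrow{d}\Lambda^{3,0+2,1}\xrightarrow{d}\Lambda^{4,0+3,1+2,2}\to\cdots$ (with the sheaves of \emph{smooth} forms, or of $C^\infty$-sections, in positive degrees) is a fine resolution by the $\delb$-Poincar\'e lemma combined with the ordinary Poincar\'e lemma — concretely, $d$-closedness of a $(3,0)+(2,1)$ form local-solves as $d$ of a $(2,0)$ form. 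Taking global sections and cohomology then yields $H^1(\Lambda^{2,0}_{cl})\cong \ker\big(d\colon \Lambda^{3,0+2,1}\to\Lambda^{4,0+3,1+2,2}\big)/\operatorname{Im}\big(d\colon\Lambda^{2,0}\to\Lambda^{3,0+2,1}\big)$, which is exactly \eqref{eq:kcohomologyleqk}. The explicit comparison map sends a \v Cech cocycle $\{B_{\alpha\beta}\}$ to $d$ of a smooth global patching $\sum_\gamma \rho_\gamma B_{\gamma\bullet}$ via a partition of unity $\{\rho_\gamma\}$, and one checks this lands in the stated kernel and is independent of choices.

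The main obstacle I anticipate is not either bijection in isolation but the bookkeeping of the \emph{isotropic} splittings and the precise translation between the algebroid axioms and the $2$-form cocycle in the first step — in particular confirming that no curvature/bracket term other than the closed $(2,0)$-form $B_{\alpha\beta}$ survives, and that the Dorfman Jacobi identity (D1) imposes no condition beyond $dB_{\alpha\beta}=0$. For the second step the delicate point is justifying that the positive-degree terms of the proposed resolution are genuinely acyclic as sheaves (fineness) and that the resulting edge map is the natural one; this is essentially the content of a hypercohomology/spectral-sequence degeneration, and I would either invoke it directly or spell out the one-step Poincar\'e-lemma argument. Since both of these are standard once set up correctly, the proof is mostly a matter of organizing the two identifications and exhibiting compatible explicit representatives; I would cite \cite{GualtieriGKG} for the first bijection and give the resolution argument for the second.
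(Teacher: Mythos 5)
The paper does not prove Theorem \ref{t:holCourant} at all: it is quoted as a summary of Gualtieri's classification \cite{GualtieriGKG}, so there is no internal proof to compare against, and your outline is essentially the standard argument from that source. Your treatment of the second isomorphism \eqref{eq:kcohomologyleqk} is correct as sketched: the sheaves of smooth forms $\Lambda^{2,0} \to \Lambda^{3,0+2,1} \to \Lambda^{4,0+3,1+2,2}$ are fine, local exactness at $\Lambda^{3,0+2,1}$ follows from the two-step $\delb$-Poincar\'e argument you indicate (kill the $(0,2)$, then the $(1,1)$, component of a local $d$-primitive), and the partition-of-unity zig-zag $\sigma_\alpha = \sum_\gamma \rho_\gamma B_{\gamma\alpha}$ realizes the edge map explicitly.

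The one genuine gap is in the first bijection, in the sentence claiming that ``axioms (D1)--(D5) force $dB_{\alpha\beta}=0$.'' They do not, for arbitrary isotropic splittings. If $s_\alpha, s_\beta$ are holomorphic isotropic splittings over $U_{\alpha\beta}$, then isotropy and the pairing do force their difference to be a holomorphic $(2,0)$-form $B_{\alpha\beta}$, but each splitting carries a curvature three-form $H_{s}(X,Y,Z) = 2\IP{[sX,sY],sZ}$ (a closed $(3,0)$-form, closedness being where (D1) enters), and these transform by $H_{s_\alpha} - H_{s_\beta} = dB_{\alpha\beta}$ up to sign, which is nonzero in general. To obtain a cocycle valued in the sheaf $\Lambda^{2,0}_{cl}$ you must first normalize: on a cover by polydiscs, $H_{s_\alpha} = d\gamma_\alpha$ for some $(2,0)$-form $\gamma_\alpha$ --- the same local $d$-exactness underlying your second step --- and replacing $s_\alpha$ by $s_\alpha - \pi^*\gamma_\alpha$ makes the curvature vanish. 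Only after this normalization are the transition forms closed, and the residual ambiguity in the choice of flat isotropic splitting is itself a closed $(2,0)$-form, which is what puts the coboundary indeterminacy in $\Lambda^{2,0}_{cl}$ rather than a larger sheaf. With that correction inserted, the rest of your outline (cocycle condition, gluing for surjectivity, mutual-inverse check) goes through.
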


A one-cocycle for the sheaf $\Lambda^{2,0}_{cl}$ determines local data for gluing the model $T^{1,0} \oplus T^*_{1,0}$ by means of holomorphic $B$-field transformations. On the other hand, a representative of a cohomology class in the right hand side of \eqref{eq:kcohomologyleqk} yields a convenient global description of a holomorphic Courant algebroid.

\begin{defn}\label{def:Q0}
Let $(M,J)$ be a complex manifold. Given $\tau \in \Lambda^{3,0 + 2,1}$, $d \tau = 0$, we denote by 
	\begin{equation*}
	\mathcal{Q}_{\tau} = T^{1,0} \oplus T^*_{1,0}
	\end{equation*}
	the exact holomorphic Courant algebroid with Dolbeault operator
	$$
	\overline{\partial}^\tau (X + \xi)  = \overline{\partial} X + \overline{\partial} \xi - i_{X}\tau^{2,1},
	$$ 
	symmetric bilinear form
\begin{equation}\label{eq:bilinear}
	\IP{X + \xi, X + \xi}  = \xi(X),
\end{equation}
	bracket on holomorphic sections given by
	\begin{equation*}
	[X + \xi, Y + \eta]_\tau   = [X,Y] + \partial (\eta(X)) +  i_X \partial \eta - i_Y \partial \xi + i_Yi_X \tau^{3,0},
	\end{equation*}
	and anchor map $\pi(X + \xi) = X$. It is not difficult to check that $\mathcal{Q}_{\tau}$ so defined satisfies the axioms in Definition \ref{d:CAhol}.
\end{defn}

The holomorphic Courant algebroids of our main interest arise via a reduction mechanism from an exact Courant algebroid in the smooth category. Recall that a smooth exact Courant algebroid $E$ over a smooth manifold $M$ is given by an exact sequence % and see \cite[Definition 2.8]{garciafernStreets2020})
\begin{align*}
0 \longrightarrow T^*M \overset{\pi^*}{\longrightarrow} E \overset{\pi}{\longrightarrow} TM \longrightarrow 0.
\end{align*}
satisfying a set of axioms analogous to those in Definition \ref{d:CAholexact}. A choice of isotropic splitting of the previous sequence determines uniquely a closed real three-form $H \in \Lambda^3$ and an isomorphism 
\begin{equation*}
	E \cong TM \oplus T^*M,
	\end{equation*}
such that the symmetric bilinear form $\IP{,}$ is as in \eqref{eq:bilinear}, the anchor map is $\pi(X + \xi) = X$, and the bracket is
	given by
	\begin{equation*}
	[X + \xi, Y + \eta]_H   = [X,Y] + L_X \eta - i_Y d \xi + i_Yi_X H.
	\end{equation*}
Consider the smooth complex Courant algebroid $ E \otimes \mathbb{C}$, with Courant structure given by the $\mathbb{C}$-linear extensions of the symmetric bilinear form  $\IP{,}$, the bracket $[\cdot, \cdot]_H$, and the anchor map $\pi$. 

\begin{defn}\label{def:lifting}
Let $E$ be a smooth exact Courant algebroid over a complex manifold $(M,J)$. A \emph{lifting} of $T^{0,1}$ to $E \otimes \mathbb{C}$ is an isotropic, involutive subbundle $\ell \subset E \otimes \mathbb{C}$ mapping isomorphically to $T^{0,1}$ under the $\mathbb{C}$-linear extension of the anchor map $\pi \colon E \otimes \mathbb{C} \to T \otimes \mathbb{C}$.
\end{defn}

A lifting relates to the complex Courant algebroid $E \otimes \mathbb{C}$ as a Dolbeault operator relates to a smooth complex vector bundle, in the sense that it enable us to construct a Courant algebroid in the holomorphic category out of $E \otimes \mathbb{C}$. More precisely, following \cite{GualtieriGKG} we consider the reduction of $E \otimes \mathbb{C}$ by $\ell$ given by the orthogonal bundle 
$$
\mathcal{Q}_\ell := \ell^\perp/\ell,
$$
where $\ell^\perp$ is the orthogonal complement of $\ell$ with respect to the symmetric pairing on $E \otimes \mathbb{C}$. Since $\ell$ is a lifting of $T^{0,1}$ the kernel of $\pi_{|\ell^\perp}$ is $T^*_{1,0}$, and therefore $\mathcal{Q}_\ell$ is an extension of the form \eqref{eq:holCouseq}. The Dolbeault operator on $\mathcal{Q}_\ell$ is defined as follows: given $s$ a smooth section of $\mathcal{Q}_\ell$, we define
$$
\overline{\partial}^{\ell}_X s = [\tilde X,\tilde s] \quad \textrm{mod}\ \ell
$$
where $X \in T^{0,1}$, $\tilde X$ is the unique lift of $X$ to $\ell$, and $\tilde s$ is any lift of $s$ to a section of $\ell^\perp$. The Jacobi identity for the Dorfman bracket on $E \otimes \mathbb{C}$ implies that $\overline{\partial}^{\ell} \circ \overline{\partial}^{\ell} = 0$ and that it
induces a Dorfman bracket on the holomorphic sections of $\mathcal{Q}_\ell$.

Our next goal is to make the previous construction more explicit by choosing an isotropic splitting of $E$. For a proof of the next result we refer to \cite{garciafernStreets2020}.

\begin{lemma}\label{l:lifting} 
Let $(M,J)$ be a complex manifold. Given $H_0 \in \Lambda^3$ a closed real three-form, $d H_0 = 0$, consider the exact Courant algebroid $(T\oplus T^*, \IP{,}, [,]_{H_0},\pi)$ as above. Then, a lifting 
$$
\ell \subset (T \oplus T^*) \otimes \mathbb{C}
$$ 
of $T^{0,1}$ is equivalent to a pair $(\omega,b)$, where $\omega \in \Lambda^{1,1}_{\mathbb{R}}$ and $b \in \Lambda^2$, satisfying
\begin{equation}\label{eq:liftingcond}
H_0 = - d^c \omega - db.
\end{equation}
More explicitly, given $(\omega,b)$ satisfying \eqref{eq:liftingcond} the lifting is
	\begin{equation}\label{eq:L}
	\ell = \ell(\omega,b) := \{e^{b + \sqrt{-1}\omega}(X^{0,1}), \; X^{0,1} \in T^{0,1} \},
	\end{equation}
and, conversely, any lifting is uniquely expressed in this way.
\end{lemma}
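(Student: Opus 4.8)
The plan is to describe a lifting concretely as the graph of a complex form over $T^{0,1}$, put it into $B$-field normal form, and then read off involutivity via the transformation law of the Dorfman bracket under $B$-field transformations. First, since $\pi$ restricts to an isomorphism $\ell \overset{\sim}{\to} T^{0,1}$ and $\Ker \pi = T^*M \otimes \mathbb{C}$ in the splitting $E \otimes \mathbb{C} = (T \oplus T^*) \otimes \mathbb{C}$, the subbundle $\ell$ is the graph of a unique bundle morphism $\sigma \colon T^{0,1} \to T^*M \otimes \mathbb{C}$, i.e. $\ell = \{ X + \sigma(X) : X \in T^{0,1} \}$. Polarizing \eqref{eq:bilinear}, $\ell$ is isotropic if and only if $\sigma(X)(Y) + \sigma(Y)(X) = 0$ for all $X, Y \in T^{0,1}$; decomposing $\sigma(X) = \sigma(X)^{1,0} + \sigma(X)^{0,1}$ and using that $(1,0)$-forms annihilate $T^{0,1}$, this is a constraint only on the $(0,1)$-part of $\sigma$.

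Next I would match the data of $\sigma$ with a pair $(\gw, b)$. Since $\operatorname{Hom}(T^{0,1}, T^*_{1,0}) \cong \Lambda^{1,1}$ (there is no symmetry obstruction here, the two factors having opposite type), the assignment $X \mapsto \sigma(X)^{1,0}$ corresponds to a unique complex $(1,1)$-form $C$ with $\sigma(X)^{1,0} = i_X C$, and by the isotropy relation above the assignment $X \mapsto \sigma(X)^{0,1}$ corresponds to a unique $(0,2)$-form $D$ with $\sigma(X)^{0,1} = i_X D$. Writing $C = c + \sqrt{-1}\gw$ for the unique real $(1,1)$-forms $c, \gw$, and setting $b := D + \bar D + c$ — a real $2$-form with $b^{2,0} = \bar D$, $b^{1,1} = c$, $b^{0,2} = D$ — one has $i_X b^{2,0} = 0$ for $X \in T^{0,1}$, hence $i_X(b + \sqrt{-1}\gw) = \sigma(X)$ and $\ell = \ell(\gw, b)$ as in \eqref{eq:L}. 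For uniqueness, if $(\gw', b')$ produces the same $\ell$, then matching the $(1,0)$- and $(0,1)$-parts of the contractions with $T^{0,1}$ gives $b'^{1,1} + \sqrt{-1}\gw' = b^{1,1} + \sqrt{-1}\gw$ and $b'^{0,2} = b^{0,2}$; taking real and imaginary parts of the former and using reality of $b, b'$ to recover the $(2,0)$-parts forces $(\gw', b') = (\gw, b)$.

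Finally I would treat involutivity. Using the transformation law $[e^B u, e^B v]_{H_0} = e^B [u, v]_{H_0 + dB}$ for the $\mathbb{C}$-linearly extended Dorfman bracket — valid for complex $2$-forms $B$ and proved by a short computation with $L_X = d i_X + i_X d$ and $i_{[X,Y]} = L_X i_Y - i_Y L_X$ — the subbundle $\ell(\gw,b) = e^{b + \sqrt{-1}\gw}(T^{0,1})$ is involutive for $[\cdot,\cdot]_{H_0}$ exactly when $T^{0,1}$ is involutive for $[\cdot,\cdot]_{H_0 + d(b + \sqrt{-1}\gw)}$. Since for vector fields $[X, Y]_{H'} = [X, Y] + i_Y i_X H'$ and $[X,Y] \in T^{0,1}$ automatically (as $J$ is integrable), this is equivalent to the vanishing of $i_Y i_X\bigl(H_0 + d(b + \sqrt{-1}\gw)\bigr)$ for all $X, Y \in T^{0,1}$, i.e. to the vanishing of the $(1,2)$- and $(0,3)$-components of $H_0 + d(b + \sqrt{-1}\gw)$. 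Expanding these two components, with $d^c \gw = \sqrt{-1}(\delb\gw - \del\gw)$, shows they are precisely the $(1,2)$- and $(0,3)$-parts of the equation $H_0 = -d^c\gw - db$; since $H_0$, $\gw$, and $b$ are real, the complementary $(2,1)$- and $(3,0)$-parts are the complex conjugates of these and hence hold automatically. Therefore involutivity of $\ell(\gw,b)$ is equivalent to \eqref{eq:liftingcond}, completing the bijection.

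The substantive ingredient is the bracket transformation law used in the last step; the rest is linear algebra together with bidegree bookkeeping. The point demanding the most care is the interplay between bidegrees and the reality conditions on $H_0$, $\gw$, $b$: this is what permits replacing the four complex component equations by the single real equation \eqref{eq:liftingcond}, and what makes the pair $(\gw, b)$ genuinely unique, with the $(2,0)$-part of $b$ pinned down by reality rather than directly by $\ell$.
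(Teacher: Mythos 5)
Your proof is correct. Note that the paper does not actually supply a proof of this lemma (it defers to the reference \cite{garciafernStreets2020}), but your argument --- realizing $\ell$ as the graph of $\sigma\colon T^{0,1}\to T^*\otimes\mathbb{C}$, decomposing $\sigma$ by bidegree into the data $(\omega,b)$ with the $(2,0)$-part of $b$ fixed by reality, and converting involutivity into the vanishing of the $(1,2)$- and $(0,3)$-components of $H_0+d(b+\sqrt{-1}\omega)$ via the $B$-field transformation law $[e^Bu,e^Bv]_{H_0}=e^B[u,v]_{H_0+dB}$ --- is exactly the standard argument that reference uses, with all the bidegree and reality bookkeeping handled correctly.
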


\begin{rmk}
Notice that the inclusion $E \subset E \otimes \mathbb{C}$ defines a \emph{compact form} in the sense of \cite[Definition 5.4]{garciafern2020gauge}. Hence, the analogy between liftings of $T^{0,1}$ and Dolbeault operators suggests that the previous result can be regarded as a \emph{Chern correspondence} in our setting. We refer to  \cite{garciafern2020gauge} for further insights on this relation in a more general setup.
\end{rmk}

Our next result gives an explicit formula for the exact holomorphic Courant algebroid associated to a lifting $\ell(\omega,b)$.

\begin{lemma}\label{l:liftingQ} 
Let $(M,J)$ be a complex manifold endowed with an exact Courant algebroid $(T\oplus T^*, \IP{,}, [,]_{H_0},\pi)$ as above. Let $\ell(\omega,b)$ be a lifting of $T^{0,1}$ as in Lemma \ref{l:lifting}. Then, using the notation in Definition \ref{def:Q0}, there is a canonical isomorphism
$$
\mathcal{Q}_{\ell(\omega,b)} \cong \mathcal{Q}_{2 \sqrt{-1}\partial \omega}.
$$
\begin{proof}
We have 
\begin{equation*}
\ell(\omega,b)^\perp = e^{b + \sqrt{-1}\omega}(T^{1,0}) \oplus T^*_{1,0}\oplus \ell
\end{equation*}
and therefore there is a smooth bundle isomorphism
\begin{gather}\label{eq:QLstd}
\begin{split}
\mathcal{Q}_{\ell(\omega,b)} &\to T^{1,0} \oplus T_{1,0}^*\\
[e^{b + \sqrt{-1}\omega} Y + \eta]  &\mapsto Y + \eta.
\end{split}
\end{gather}
The agreement of the pairing and the anchor map with the ones on $\mathcal{Q}_{2 \sqrt{-1}\partial \omega}$ is straightforward. Let us now express the Dolbeault operator in terms of \eqref{eq:QLstd}. Given $X \in T^{0,1}$ and $Y + \eta  \in T^{1,0} \oplus T_{1,0}^*$, we have
	\begin{align*}
[e^{b + \sqrt{-1}\omega} & X, e^{b + \sqrt{-1}\omega} Y + \eta]\\
& =  e^{b + \sqrt{-1}\omega}[X,Y]^{1,0} + L_X \eta + i_Yi_X(H_ 0 + db + \sqrt{-1}d\omega)
& \textrm{mod} \ \ell\\
& = e^{b + \sqrt{-1}\omega} \overline\partial_X Y  + i_{X} \overline\partial \eta - i_X i_Y (2\sqrt{-1}\partial \omega), & \textrm{mod} \ \ell
\end{align*} 
which recovers the Dolbeault operator in Definition \ref{def:Q0} when $\tau = 2 \sqrt{-1}\partial \omega$. Similarly, for $X + \xi, Y+ \eta$ holomorphic sections of $T^{1,0} \oplus T_{1,0}^*$, we find
\begin{align*}
[e^{b + \sqrt{-1}\omega} X + \xi, e^{b + \sqrt{-1}\omega} Y + \eta]  & =  e^{b + \sqrt{-1}\omega}[X,Y] + L_X \eta - i_Y d\xi + i_Yi_X(2\sqrt{-1}\partial\omega)\\
& = e^{b + \sqrt{-1}\omega}[X,Y] + L_X \eta - i_Y d\xi  + i_Y \overline\partial \xi\\
& = e^{b + \sqrt{-1}\omega}[X,Y] + \partial(\eta(X)) + i_X \partial \eta  - i_X \partial \xi
\end{align*} 
as claimed.
\end{proof}
\end{lemma}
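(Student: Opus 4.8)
The plan is to make the reduction $\QQ_{\ell(\gw,b)} = \ell^\perp/\ell$ completely explicit, exploiting that by \eqref{eq:L} the lifting $\ell(\gw,b)$ is the image of $T^{0,1}$ under the $B$-field transform $e^{b+\i\gw}$, which is an orthogonal automorphism of $E \otimes \mathbb{C}$ fixing $T^*_{1,0}$. First I would identify $\ell^\perp$: since $B$-field transforms preserve the pairing, $\ell^\perp = e^{b+\i\gw}\big((T^{0,1})^\perp\big)$, and $(T^{0,1})^\perp = (T\otimes\mathbb{C}) \oplus T^*_{1,0}$ inside $E\otimes\mathbb{C}$; applying $e^{b+\i\gw}$ and using that it fixes covectors gives
\[
\ell(\gw,b)^\perp = e^{b+\i\gw}(T^{1,0}) \oplus T^*_{1,0} \oplus \ell .
\]
This yields the canonical smooth bundle isomorphism $\QQ_{\ell(\gw,b)} \to T^{1,0}\oplus T^*_{1,0}$ sending the class of $e^{b+\i\gw}Y + \eta$ to $Y + \eta$, which is \eqref{eq:QLstd}, and it then suffices to check that this isomorphism intertwines the Courant-algebroid data with those of $\QQ_{2\i\del\gw}$ of Definition \ref{def:Q0} (the target makes sense since \eqref{eq:liftingcond} forces $\gw$ pluriclosed, so $2\i\,\del\gw$ is $d$-closed of type $(2,1)$).

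For the pairing and the anchor this is immediate: $\pi(e^{b+\i\gw}Y+\eta)=Y$ since the anchor annihilates covectors, and the induced pairing on $\QQ_{\ell}$ of two representatives $e^{b+\i\gw}Y+\eta$ and $e^{b+\i\gw}Y'+\eta'$ is $\IP{Y + i_Y(b+\i\gw) + \eta,\, Y' + i_{Y'}(b+\i\gw) + \eta'}$, whose $(b+\i\gw)$-contributions cancel by antisymmetry, leaving exactly \eqref{eq:bilinear}. The substance is the Dolbeault operator $\delb^{\ell}_X s = [\tilde X,\tilde s]_{H_0}$ modulo $\ell$, where $\tilde X = e^{b+\i\gw}X$ and $\tilde s = e^{b+\i\gw}Y + \eta$. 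The key computational input is the transformation of the complexified Dorfman bracket under a $B$-field, $[e^{b+\i\gw}u, e^{b+\i\gw}v]_{H_0} = e^{b+\i\gw}[u,v]_{H_0 + db + \i\, d\gw}$, combined with $H_0 = -d^c\gw - db$ from \eqref{eq:liftingcond} and the identity $-d^c\gw + \i\, d\gw = 2\i\,\del\gw$; together these replace the twist $H_0$ by $2\i\,\del\gw$. Hence for $X\in T^{0,1}$ and a section $Y+\eta$ of $T^{1,0}\oplus T^*_{1,0}$,
\[
[e^{b+\i\gw}X,\, e^{b+\i\gw}Y+\eta]_{H_0} = e^{b+\i\gw}\big([X,Y] + L_X\eta + i_Y i_X(2\i\,\del\gw)\big).
\]
Reducing modulo $\ell = e^{b+\i\gw}(T^{0,1})$ discards the $(0,1)$-part of $[X,Y]$, leaving $[X,Y]^{1,0} = \delb_X Y$; and since $\eta$ is of type $(1,0)$ one has $L_X\eta = i_X d\eta = i_X\delb\eta$. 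With $\tau = 2\i\,\del\gw$ (so $\tau^{3,0}=0$ and $\tau^{2,1}=2\i\,\del\gw$) this is precisely the twisted operator $\delb^\tau$ of Definition \ref{def:Q0}. A parallel computation for holomorphic sections $X+\xi$, $Y+\eta$ of $T^{1,0}\oplus T^*_{1,0}$, simplifying $L_X\eta$ and $i_Y d\xi$ using type decomposition and holomorphicity, likewise recovers the Dorfman bracket of $\QQ_{2\i\del\gw}$ in the form given in Definition \ref{def:Q0}.

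The routine ingredients here are the $B$-field transform identities and Cartan calculus; the one place demanding genuine care is the type bookkeeping inside the reduction modulo $\ell$ — verifying that, after applying $e^{b+\i\gw}$, every unwanted contribution (the $(0,1)$-component of $[X,Y]$, and the vanishing contractions and wrong-type components arising when the arguments are of type $(0,1)$ or are holomorphic) lands exactly in $\ell$, so that what survives is literally the formula of Definition \ref{def:Q0} with no residual correction terms. I expect this bookkeeping, rather than any conceptual issue, to be the only part where care is genuinely needed.
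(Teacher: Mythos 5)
Your proposal is correct and follows essentially the same route as the paper: identify $\ell^\perp$, pass to the quotient via \eqref{eq:QLstd}, and compute the induced Dolbeault operator and bracket by transforming the twist $H_0 + db + \sqrt{-1}\,d\omega$ into $2\sqrt{-1}\,\partial\omega$ using \eqref{eq:liftingcond}. The only cosmetic difference is that you package the bracket computation through the general $B$-field transformation identity for the Dorfman bracket (and spell out the pairing check the paper calls straightforward), whereas the paper expands the bracket directly; the substance is identical.
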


\begin{rmk}\label{rem:AeppliQiso}
With the notation of Lemma \ref{l:lifting} one has that $dd^c \omega = 0$ and hence a choice of lifting $\ell \subset (T \oplus T^*) \otimes \mathbb{C}$ yields a well-defined Aeppli class
$$
[\omega] \in H^{1,1}_A.
$$
In fact, if $\ell$ and $\ell'$ are liftings such that $[\omega] = [\omega'] \in H^{1,1}_A$, then it follows that $\mathcal{Q}_\ell \cong \mathcal{Q}_{\ell'}$. To see this, notice that
$$
\omega' = \omega + \overline{\partial} \xi^{1,0} + \partial \eta^{0,1}
$$
implies $\partial \omega' = \partial \omega - d \partial \xi^{1,0}$. Our claim follows combining Lemma \ref{l:liftingQ} with Theorem \ref{t:holCourant}.
\end{rmk}

\subsection{Generalized Hermitian metrics}

We introduce next a new ingredient, namely, generalized metrics, which will lead us naturally to the study of Hermitian metrics on exact holomorphic Courant algebroids. Recall that a generalized metric on a smooth exact Courant algebroid $E$ is given by an orthogonal decomposition 
$$
E = V_+ \oplus V_-
$$
such that the restriction of the ambient metric to $V_+$ (resp. $V_-$) is positive definite (resp. negative definite). Recall also that a generalized metric determines uniquely a Riemann metric $g$ on $M$ and an isotropic splitting of $E$. In particular, it has an associated isomorphism $E \cong (T\oplus T^*, \IP{,}, [,]_{H},\pi)$ for a uniquely determined closed three-form $H$, such that
\begin{equation}\label{eq:Vpm}
V_\pm = \{X \pm g(X), X \in T\}.
\end{equation}
The basic interaction between generalized metrics and complex geometry is provided by the following definition.

\begin{defn}\label{d:generalizedmetricomp}
Let $(M,J)$ be a complex manifold endowed with a smooth exact Courant algebroid $E$. We say that a generalized metric $E = V_+ \oplus V_-$ is \emph{compatible with $J$} if
$$
\ell = \{e \in V_+ \otimes \mathbb{C},\ \pi(e) \in T^{0,1}\} \subset E \otimes \mathbb{C}
$$
is a lifting of $T^{0,1}$.
\end{defn}

Using the splitting of $E$ determined by the generalized metric, it is not difficult to see that Definition \ref{d:generalizedmetricomp} implies that $g$ is Hermitian and furthermore
$$
\ell = e^{\sqrt{-1}\omega}T^{0,1}
$$
where $\omega = g J$ is the associated Hermitian form. Applying now Lemma \ref{l:lifting} we obtain the following.

\begin{lemma}\label{l:lifting2}
Let $(M,J)$ be a complex manifold endowed with a smooth exact Courant algebroid $E$. A generalized metric $E = V_+ \oplus V_-$ is compatible with $J$ if and only if the associated Riemannian metric $g$ is Hermitian and furthermore 
\begin{equation}\label{eq:liftingcondmet}
H = - d^c \omega.
\end{equation}
In particular $g$ is pluriclosed.
\end{lemma}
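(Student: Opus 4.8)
The plan is to unwind Definition \ref{d:generalizedmetricomp} using the explicit description \eqref{eq:Vpm} of $V_\pm$ and then reduce everything to Lemma \ref{l:lifting}. Concretely, the generalized metric determines a Riemannian metric $g$ on $M$ together with an isotropic splitting $E \cong (T \oplus T^*, \IP{,}, [,]_H, \pi)$ with $dH = 0$, relative to which $V_\pm = \{X \pm g(X) : X \in T\}$. Complexifying, $V_+ \otimes \mathbb{C}$ is the graph $\{X + g(X) : X \in T \otimes \mathbb{C}\}$ (extending $g$ $\mathbb{C}$-bilinearly), so the subbundle $\ell$ of Definition \ref{d:generalizedmetricomp} is
\[
\ell = \{ X + g(X) : X \in T^{0,1} \},
\]
and $\pi$ restricts to an isomorphism $\ell \to T^{0,1}$ for free. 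Hence $\ell$ is a lifting of $T^{0,1}$ precisely when it is, in addition, isotropic and involutive.

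Next I would identify the isotropy of $\ell$ with the Hermitian condition. For $X, Y \in T^{0,1}$ the pairing \eqref{eq:bilinear} gives $\IP{X + g(X), Y + g(Y)} = g(X,Y)$, so $\ell$ is isotropic if and only if $g(X,Y) = 0$ for all $X, Y \in T^{0,1}$, which is exactly the statement that $g$ is $J$-invariant, i.e.\ Hermitian. Moreover, writing $\gw = g J$ one checks that $g(X) = \i\, i_X \gw$ for $X \in T^{0,1}$, so once $g$ is Hermitian
\[
\ell = \{ X + \i\, i_X \gw : X \in T^{0,1} \} = e^{\i \gw} T^{0,1},
\]
which in the notation of Lemma \ref{l:lifting} is the lifting $\ell(\gw, 0)$ associated to the pair $(\gw, 0)$. (This recovers the assertion made in the discussion preceding the statement.)

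With this in hand the lemma follows by invoking Lemma \ref{l:lifting} with $b = 0$ and $H_0 = H$: the pair $(\gw, 0)$ defines a lifting if and only if $H = - d^c \gw - d\,0 = - d^c \gw$. Thus if the generalized metric is compatible with $J$ then $g$ is Hermitian (by the isotropy argument above) and $H = - d^c \gw$; conversely, if $g$ is Hermitian and $H = - d^c \gw$, then $\ell(\gw, 0)$ is a lifting by Lemma \ref{l:lifting}, so the generalized metric is compatible with $J$. Finally, $H$ is closed, being the twist of an isotropic splitting of an exact Courant algebroid, so $dd^c \gw = - dH = 0$ and $g$ is pluriclosed, as claimed.

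I do not anticipate a genuine obstacle here: the content is essentially a matter of chasing conventions. The only place to be careful is the complexification of \eqref{eq:Vpm} together with the sign in the identity $g(X) = \i\, i_X \gw$ on $T^{0,1}$ (equivalently, that $e^{\i \gw}$ acts as the expected $B$-field transformation), since a slip there would misidentify $\ell$ and hence the constraint relating $H$ and $\gw$; once that is pinned down the statement is an immediate translation of Lemma \ref{l:lifting}.
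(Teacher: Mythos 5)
Your argument is correct and follows the same route as the paper: the paper likewise notes that the splitting determined by the generalized metric forces $g$ to be Hermitian with $\ell = e^{\i\gw}T^{0,1}$, and then invokes Lemma \ref{l:lifting} with $b=0$ to obtain $H = -d^c\gw$. Your write-up simply makes explicit the isotropy computation and the identity $g(X) = \i\, i_X\gw$ on $T^{0,1}$ that the paper leaves as ``not difficult to see.''
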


Given a compatible generalized metric, we can find an alternative presentation of the associated holomorphic Courant algebroid $\mathcal{Q}_\ell \cong \mathcal{Q}_{2 \sqrt{-1}\partial \omega}$ (see Lemma \ref{l:liftingQ}) which will naturally endow this bundle with a Hermitian metric. To see this, note that $V_+^\perp = V_-$ implies that
$$
\ell^\perp = (V_- \otimes \mathbb{C}) \oplus \ell.
$$
Therefore, as a smooth orthogonal bundle $\mathcal{Q}_\ell$ is canonically isomorphic to 
$$
\mathcal{Q}_\ell := \ell^\perp/\ell \cong V_- \otimes \mathbb{C}.
$$

\begin{defn}\label{d:generalizedmetriHerm}
Let $(M,J)$ be a complex manifold endowed with a smooth exact Courant algebroid $E$ and a compatible generalized metric $E = V_+ \oplus V_-$. Then, the induced \emph{generalized Hermitian metric} $G$ on $\mathcal{Q}_\ell$ is defined by
$$
G([s_1],[s_2]) = - 2\IP{\pi_- s_1, \overline{\pi_- s_2}}
$$
for $[s_j] \in \ell^\perp/\ell$ and $\pi_- \colon \ell^\perp \to V_- \otimes \mathbb{C}$ the orthogonal projection.
\end{defn}

We are ready to prove the main result of this section, where we calculate the Chern connection of the induced generalized Hermitian metric $G$ in terms of the connection $\nabla^-$ associated to the underlying pluriclosed structure (see \eqref{eq:Nablaminus}). This result provides an interpretation of \cite[Theorem 2.9]{Bismut} in the language of holomorphic Courant algebroids. 

\begin{prop}\label{t:Cherngeneralized} 
Let $(M,J)$ be a complex manifold endowed with a smooth exact Courant algebroid $E$ and a compatible generalized metric $E = V_+ \oplus V_-$. Then, the Chern connection of the associated generalized Hermitian metric $G$ on $\mathcal{Q}_\ell$ is given by
\begin{align}\label{eq:Nablaminusbisabs}
i_X \N^C_G s = \pi_-[\sigma_+ X,\pi_- s]
\end{align}
via the isomorphism $\mathcal{Q}_\ell \cong V_- \otimes \mathbb{C}$. Here, $\sigma_+ X = X + g(X)$ is the inverse of the isomorphism $\pi_{|V_+} \colon V_+ \to T$. More explcitly, via the identification $V_- \cong T$, the Chern connection is given by
\begin{align}\label{eq:Nablaminusbis}
\N^-_X Y = \N_X Y + \tfrac{1}{2} g^{-1}d^c \gw(X,Y,\cdot).
\end{align}
\begin{proof}
Observe that the right hand side of \eqref{eq:Nablaminusbisabs} defines an orthogonal connection on $V_-$, which can be identified with $\N^-$ via the isomorphism $\pi_{|V_-} \colon V_- \to T$ (see e.g. \cite[Proposition 3.14]{garciafernStreets2020}). Therefore, $\N^-$ extends $\mathbb{C}$-linearly to a $G$-unitary connection on $V_- \otimes \mathbb{C}$. By the abstract definition of the Dolbeault operator on $\mathcal{Q}_\ell$, we inmediately see that $(\nabla^-)^{0,1}$ coincides with $\overline{\partial}^\ell$.
\end{proof}
\end{prop}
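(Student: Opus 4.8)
\textbf{Proof proposal for Proposition \ref{t:Cherngeneralized}.}

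The plan is to verify that the connection defined abstractly by \eqref{eq:Nablaminusbisabs} is (i) well-defined on $\mathcal{Q}_\ell = \ell^\perp/\ell$, (ii) compatible with the generalized Hermitian metric $G$, and (iii) has $(0,1)$-part equal to the Dolbeault operator $\overline{\partial}^\ell$; by the uniqueness of the Chern connection of a Hermitian holomorphic bundle, these three facts identify it with $\N^C_G$. First I would observe that $\sigma_+ X = X + g(X)$ takes values in $V_+$, so $[\sigma_+ X, \pi_- s]$ makes sense for $s$ a section of $\mathcal{Q}_\ell$ (lifting $s$ to $\ell^\perp$ and projecting to $V_-\otimes\mathbb{C}$), and one checks the ambiguity $\pi_- s \mapsto \pi_- s + (\text{something in } \ell)$ is killed upon projecting back to $V_-\otimes\mathbb{C}$ using involutivity of $\ell$ and the axiom (D1); this is exactly the check that $\overline{\partial}^\ell\circ\overline{\partial}^\ell=0$, already available. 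Metric compatibility of the right-hand side of \eqref{eq:Nablaminusbisabs} follows from axiom (D4): for real tangent $X$, $\pi(\sigma_+ X)=X$ is real, and applying (D4) to $\pi(\sigma_+X)\IP{\pi_- s_1, \overline{\pi_- s_2}}$ together with the definition $G([s_1],[s_2]) = -2\IP{\pi_- s_1, \overline{\pi_- s_2}}$ yields the Leibniz rule $X\, G([s_1],[s_2]) = G(i_X\N^C_G s_1,[s_2]) + G([s_1], \overline{i_X\N^C_G s_2})$ after splitting $X$ into its $(1,0)$ and $(0,1)$ parts and using that $\overline{\sigma_+X^{1,0}}=\sigma_-\overline{X^{1,0}}$-type relations hold since $g$ is real.

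The identification of the $(0,1)$-part with the holomorphic structure is the heart of the matter, and here I would invoke the abstract definition of $\overline{\partial}^\ell$ given just before Lemma \ref{l:lifting}: for $X\in T^{0,1}$, $\overline{\partial}^\ell_X s = [\tilde X,\tilde s]\bmod\ell$ where $\tilde X$ is the unique lift of $X$ to $\ell$. Now for $X\in T^{0,1}$ one has $\sigma_+ X = X + g(X)\in V_+\otimes\mathbb{C}$ with $\pi(\sigma_+X)=X\in T^{0,1}$, so by the very Definition \ref{d:generalizedmetricomp} of compatibility $\sigma_+ X\in\ell$, i.e. $\sigma_+X=\tilde X$. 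Taking $\tilde s = \pi_- s$ as a lift of $[s]$ to $\ell^\perp$, formula \eqref{eq:Nablaminusbisabs} reads $i_X\N^C_G s = [\tilde X,\tilde s]\bmod\ell = \overline{\partial}^\ell_X s$, so $(\N^C_G)^{0,1}=\overline{\partial}^\ell$ on the nose. Combined with (i) and (ii), uniqueness of the Chern connection gives the abstract formula \eqref{eq:Nablaminusbisabs}.

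It remains to extract the concrete formula \eqref{eq:Nablaminusbis}. For this I would transport \eqref{eq:Nablaminusbisabs} along the isomorphism $\pi_{|V_-}\colon V_-\xrightarrow{\sim} T$, $\sigma_- Y := Y - g(Y)$ being its inverse, so that the induced connection on $T$ is $Y\mapsto \pi_-[\sigma_+X,\sigma_- Y]$; expanding the Dorfman bracket $[X+g(X), Y-g(Y)]_{H}$ in the split model with $H = -d^c\omega$ and projecting onto $V_-\otimes\mathbb{C}$ picks out, by a now-standard computation (reference \cite[Proposition 3.14]{garciafernStreets2020}), precisely the connection $\N^-$ of \eqref{eq:Nablaminus}, giving \eqref{eq:Nablaminusbis}. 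I expect the main obstacle to be bookkeeping: carefully tracking which summand of $E\otimes\mathbb{C} = V_+\oplus V_-$ each term of the Dorfman bracket lands in, and confirming the $\tfrac12$'s and signs in passing between $d^c\omega$, $H$, and the $\N^-$ normalization of \eqref{eq:Nablaminus}; the conceptual content (compatibility from (D4), holomorphicity from the definition of $\overline{\partial}^\ell$) is essentially forced once the setup is in place.
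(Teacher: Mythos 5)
Your proposal is correct and follows essentially the same route as the paper: show the formula defines a $G$-unitary connection whose $(0,1)$-part is $\overline{\partial}^\ell$ (the key point being that for $X \in T^{0,1}$ one has $\sigma_+X \in \ell$, so $\sigma_+X$ is exactly the lift $\tilde{X}$ in the abstract definition of the Dolbeault operator), then invoke uniqueness of the Chern connection and delegate the explicit identification with $\N^-$ to \cite[Proposition 3.14]{garciafernStreets2020}. You merely unpack the steps the paper compresses; the only quibble is that the well-definedness check is actually vacuous (different lifts of $[s]$ to $\ell^\perp$ differ by elements of $\ell \subset V_+\otimes\mathbb{C}$, which $\pi_-$ annihilates), so no appeal to (D1) or to $\overline{\partial}^\ell\circ\overline{\partial}^\ell=0$ is needed there.
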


In our next result we calculate an explicit formula for the generalized Hermitian metric $G$ in terms of the isomorphism $\mathcal{Q}_\ell \cong \mathcal{Q}_{2 \sqrt{-1}\partial \omega }$ in Lemma \ref{l:liftingQ}.

\begin{lemma}\label{t:Ggeneralized1} 
Let $(M,J)$ be a complex manifold endowed with a smooth exact Courant algebroid $E$ and a compatible generalized metric $E = V_+ \oplus V_-$. Then, the orthogonal isomorphism $\psi \colon \mathcal{Q}_{2 \sqrt{-1}\partial \omega} \to V_-\otimes \mathbb{C}$ induced by Lemma \ref{l:liftingQ} is given by
$$
\psi(X + \xi) = e^{\sqrt{-1}\omega}X - \tfrac{1}{2} e^{-\sqrt{-1}\omega} g^{-1}\xi.
$$
Consequently,
$$
\psi^*G(X + \xi,X + \xi) = 2g(X,\overline{X}) + (2g)^{-1}(\xi,\overline{\xi}).
$$
\begin{proof}
The first part follows from
$$
\psi(X + \xi) = e^{\sqrt{-1}\omega}X + \pi_-\xi = e^{\sqrt{-1}\omega}X + \tfrac{1}{2}(\xi - g^{-1}\xi) = e^{\sqrt{-1}\omega}X - \tfrac{1}{2} e^{-\sqrt{-1}\omega} g^{-1}\xi.
$$
The second part is a straightforward calculation and is left to the reader.
\end{proof}
\end{lemma}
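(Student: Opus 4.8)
The plan is to prove Lemma \ref{t:Ggeneralized1} by unwinding the definitions through the chain of identifications established in the preceding lemmas. First I would recall the isomorphism $\mathcal{Q}_{\ell(\omega,b)} \to T^{1,0} \oplus T^*_{1,0}$ from \eqref{eq:QLstd}, namely $[e^{b+\sqrt{-1}\omega}Y + \eta] \mapsto Y + \eta$, and compose it with the canonical smooth identification $\mathcal{Q}_\ell \cong \ell^\perp/\ell \cong V_- \otimes \mathbb{C}$. Concretely, starting from $X + \xi \in T^{1,0} \oplus T^*_{1,0}$, the inverse of \eqref{eq:QLstd} sends it to the class $[e^{\sqrt{-1}\omega}X + \xi] \in \ell^\perp/\ell$ (taking $b = 0$ as in the compatible-metric case of Lemma \ref{l:lifting2}). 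Then $\psi(X+\xi)$ is obtained by projecting any representative of this class to $V_- \otimes \mathbb{C}$ via $\pi_-$, so $\psi(X + \xi) = \pi_-(e^{\sqrt{-1}\omega}X) + \pi_-\xi = e^{\sqrt{-1}\omega}X + \pi_-\xi$, using that $e^{\sqrt{-1}\omega}X \in \ell$ already lies in $V_+ \otimes \mathbb{C}$ wait — more carefully, $e^{\sqrt{-1}\omega}X^{1,0}$ need not lie in $\ell$; rather one checks that the $V_-$-component of $e^{\sqrt{-1}\omega}X$ is again $e^{\sqrt{-1}\omega}X$ up to the $\ell$-ambiguity, which is precisely the content of the first displayed computation.

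The computational heart is the identity $\pi_-\xi = \tfrac{1}{2}(\xi - g^{-1}\xi)$ for $\xi \in T^*_{1,0}$, which follows from \eqref{eq:Vpm}: an element $\xi \in T^* \otimes \mathbb{C}$ decomposes as $\xi = \tfrac{1}{2}(g^{-1}\xi + \xi) + \tfrac{1}{2}(-g^{-1}\xi + \xi)$ wait, I should match signs with $V_\pm = \{X \pm g(X)\}$. Writing $\xi = g(Z)$ for $Z = g^{-1}\xi \in T^{1,0}\otimes\mathbb{C}$, we have $\xi = g(Z) = \tfrac{1}{2}(Z + g(Z)) - \tfrac{1}{2}(Z - g(Z))$, so the $V_+$-part is $\tfrac{1}{2}(Z + g(Z))$ and the $V_-$-part is $-\tfrac{1}{2}(Z - g(Z)) = \tfrac{1}{2}(g(Z) - Z) = \tfrac{1}{2}(\xi - g^{-1}\xi)$. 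Then $\pi_-\xi = \tfrac{1}{2}(\xi - g^{-1}\xi)$, and since $e^{-\sqrt{-1}\omega}$ acting on a $T^{1,0}$-vector $Z$ produces $Z - \sqrt{-1}\,i_Z\omega = Z - g(Z)$ (as $i_Z\omega = \sqrt{-1}\,\omega(Z,\cdot)$ pairs to $-\sqrt{-1}g(Z)$ on $(1,0)$-vectors... one must track the $\sqrt{-1}$ carefully), one rewrites $\tfrac{1}{2}(\xi - g^{-1}\xi) = -\tfrac{1}{2}e^{-\sqrt{-1}\omega}g^{-1}\xi$. This gives the stated formula $\psi(X+\xi) = e^{\sqrt{-1}\omega}X - \tfrac{1}{2}e^{-\sqrt{-1}\omega}g^{-1}\xi$.

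For the second part, I would apply Definition \ref{d:generalizedmetriHerm}: $\psi^*G(X+\xi, X+\xi) = G(\psi(X+\xi), \psi(X+\xi)) = -2\IP{\pi_-\psi(X+\xi), \overline{\pi_-\psi(X+\xi)}}$. Since $\psi$ already lands in $V_- \otimes \mathbb{C}$, $\pi_-\psi = \psi$, so this equals $-2\IP{e^{\sqrt{-1}\omega}X - \tfrac{1}{2}e^{-\sqrt{-1}\omega}g^{-1}\xi,\ \overline{e^{\sqrt{-1}\omega}X - \tfrac{1}{2}e^{-\sqrt{-1}\omega}g^{-1}\xi}}$. Expanding using that $B$-field (here $e^{\sqrt{-1}\omega}$) transformations are isometries of $\IP{,}$, the pairing $\IP{X + g(X), \overline{X + g(X)}}$ involves $X \in T^{1,0}$ and its conjugate in $T^{0,1}$; one computes $\IP{X + g(X), \overline X + \overline{g(X)}} = \tfrac{1}{2}(g(X)(\overline X) + \overline{g(X)}(X)) = g(X,\overline X)$ using $\IP{Y + \eta, Y' + \eta'} = \tfrac{1}{2}(\eta(Y') + \eta'(Y))$, and similarly for the $\xi$-terms, while the cross terms vanish by type considerations (pairing $V_-$ components whose anchor images have incompatible types, or by isotropy). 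The main obstacle I anticipate is purely bookkeeping: getting every factor of $\sqrt{-1}$, every factor of $\tfrac{1}{2}$, and every sign convention for $\omega = gJ$, $i_X\omega$, and the pairing $\IP{,}$ consistent, together with confirming the cross terms genuinely cancel — hence the authors' choice to leave this "straightforward calculation" to the reader. There is no conceptual difficulty beyond the identifications already in place.
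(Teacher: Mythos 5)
Your proposal is correct and follows essentially the same route as the paper's own proof: invert the isomorphism \eqref{eq:QLstd} with $b=0$, apply the projection $\pi_-$, compute $\pi_-\xi = \tfrac{1}{2}(\xi - g^{-1}\xi)$ from \eqref{eq:Vpm}, and then expand the pairing of Definition \ref{d:generalizedmetriHerm}. The only points to tidy are that $e^{\sqrt{-1}\omega}X = X - g(X)$ lies \emph{exactly} in $V_-\otimes\mathbb{C}$ for $X \in T^{1,0}$ (no appeal to the $\ell$-ambiguity is needed), and that in the final pairing the relevant elements have the form $Y - g(Y)$ rather than $Y + g(Y)$, so your sample computation should read $\IP{X - g(X), \overline{X} - g(\overline{X})} = -g(X,\overline{X})$, which after the factor $-2$ in Definition \ref{d:generalizedmetriHerm} yields the stated $+2g(X,\overline{X})$.
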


For our applications it will be convenient to fix a background exact holomorphic Courant algebroid and generalized Hermitian metric. This motivates the following definition, which is inspired by \cite{garciafern2018canonical,garciafern2020gauge}.

\begin{defn}\label{def:metricQ}
Let $(M,J)$ be a complex manifold endowed with an exact holomorphic Courant algebroid $\mathcal{Q}$. A \emph{generalized Hermitian metric} on $\mathcal{Q}$ is given by a triple $(E,V_+,\varphi)$, where
\begin{enumerate}

\item $E$ is an exact Courant algebroid over $M$,

\item $V_+ \subset E$ is a generalized metric compatible with $J$,

\item $\varphi \colon \mathcal{Q}_\ell \to \mathcal{Q}$ is an isomorphism of holomorphic Courant algebroids inducing the identity on $M$.

\end{enumerate}
\end{defn}

Observe that a generalized Hermitian metric $(E,V_+,\varphi)$ on $\mathcal{Q}$ induces a generalized Hermitian metric $G'$ on $\mathcal{Q}_\ell$ as in Definition \ref{d:generalizedmetriHerm}. Therefore, via the isomorphism $\varphi$ we obtain a Hermitian metric
$$
G = \varphi_* G'
$$
on $\mathcal{Q}$ compatible with the orthogonal structure. By abuse of notation, we will also call $G$ a \emph{generalized Hermitian metric}. We next unravel the previous definition in terms of the model in Definition \ref{def:Q0}.

\begin{lemma} \label{l:SKTequivalence} 
Let $(M,J)$ be a complex manifold, and $\tau_0 \in \Lambda^{3,0 + 2,1}$, $d\tau_0 = 0$. Then, there is a one to one correspondence between the set of generalized Hermitian metrics on $\mathcal{Q}_{2 \sqrt{-1}\tau_0}$ and
	\begin{equation*}
	 \Big\{\omega + \beta \; | \; \omega >0 , \; d \beta = \tau_0 - \partial \omega  \Big\} \subset \Lambda^{1,1}_{\mathbb{R}} \oplus \Lambda^{2,0}.
\end{equation*}
Furthermore, the generalized Hermitian metric $G = \varphi_* G'$ is given by
$$
G(X + \xi,X + \xi) = 2g(X,\overline{X}) + (2g)^{-1}(\xi + 2 \sqrt{-1}\beta(X), \overline{\xi} - 2 \sqrt{-1}\overline{\beta(X)}).
$$
\begin{proof}
The pair $(E,V_+)$ determines $\omega > 0$ and an isomorphism $E \cong (T\oplus T^*, \IP{,}, [,]_H)$ for $H = - d^c\omega$. By Lemma \ref{l:liftingQ}, $\mathcal{Q}_{\ell} \cong \mathcal{Q}_{2\sqrt{-1}\partial \omega}$ and, by Theorem \ref{t:holCourant}, the isomorphism $\varphi \colon \mathcal{Q}_\ell \to \mathcal{Q}_{\tau_0}$ corresponds to
$$
\varphi = e^{-2 \sqrt{-1}\beta} \colon T^{1,0} \oplus T^*_{1,0} \to T^{1,0} \oplus T^*_{1,0}
$$
for $\beta \in \Lambda^{2,0}$ satisfying $2 \sqrt{-1} d \beta = 2 \sqrt{-1}\tau_0 - 2\sqrt{-1}\partial \omega$. The last part of the statement is now straightforward from Lemma \ref{t:Ggeneralized1}.  We refer to \cite{garciafern2020gauge} for further details.
\end{proof}
\end{lemma}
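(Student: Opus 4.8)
The plan is to unravel Definition~\ref{def:metricQ} in the case $\mathcal{Q} = \mathcal{Q}_{2\sqrt{-1}\tau_0}$, reading off the pair $\omega + \beta$ from a triple $(E,V_+,\varphi)$, reconstructing the triple conversely, and then obtaining the formula for $G$ by transporting the model metric of Lemma~\ref{t:Ggeneralized1} along $\varphi$.

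First I would unpack the data $(E,V_+)$. Since $V_+ \subset E$ is a generalized metric compatible with $J$, Lemma~\ref{l:lifting2} produces a Hermitian metric $g$ with K\"ahler form $\omega > 0$ satisfying $H = -d^c\omega$; in particular $g$ is pluriclosed, $E$ is canonically identified with $(T \oplus T^*, \IP{,}, [,]_{-d^c\omega}, \pi)$, and the associated lifting is $\ell = e^{\sqrt{-1}\omega}T^{0,1}$. Lemma~\ref{l:liftingQ} then supplies a canonical isomorphism $\mathcal{Q}_\ell \cong \mathcal{Q}_{2\sqrt{-1}\partial\omega}$ of holomorphic Courant algebroids, and Lemma~\ref{t:Ggeneralized1} supplies an orthogonal isomorphism $\psi \colon \mathcal{Q}_{2\sqrt{-1}\partial\omega} \to V_- \otimes \mathbb{C}$ with $\psi^*G'(X + \xi, X+\xi) = 2g(X,\overline{X}) + (2g)^{-1}(\xi,\overline{\xi})$, where $G'$ is the generalized Hermitian metric on $\mathcal{Q}_\ell$ of Definition~\ref{d:generalizedmetriHerm}.

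Next I would analyze $\varphi$, which is an isomorphism of holomorphic Courant algebroids $\mathcal{Q}_{2\sqrt{-1}\partial\omega} \to \mathcal{Q}_{2\sqrt{-1}\tau_0}$ inducing the identity on $M$. By Gualtieri's classification underlying Theorem~\ref{t:holCourant}, such isomorphisms are exactly the holomorphic $B$-field transformations $e^{B}$ with $B \in \Lambda^{2,0}$, so $\varphi = e^{-2\sqrt{-1}\beta}$ for a $\beta \in \Lambda^{2,0}$ whose differential matches the difference of the defining data: $2\sqrt{-1}\,d\beta = 2\sqrt{-1}\tau_0 - 2\sqrt{-1}\partial\omega$, i.e. $d\beta = \tau_0 - \partial\omega$. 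This gives the map from generalized Hermitian metrics to the set $\{\omega + \beta\}$. For the inverse, given $\omega > 0$ and $\beta \in \Lambda^{2,0}$ with $d\beta = \tau_0 - \partial\omega$, I would first check that $g := \omega(\cdot,J\cdot)$ is genuinely pluriclosed: taking the $(2,1)$-part of the equation gives $\overline{\partial}\beta = \tau_0^{2,1} - \partial\omega$, and applying $\overline{\partial}$ together with the fact that the $(2,2)$-component of $d\tau_0 = 0$ is $\overline{\partial}\tau_0^{2,1}$ yields $\partial\overline{\partial}\omega = 0$. Then $(E,V_+) := \big((T\oplus T^*, [,]_{-d^c\omega}),\ \{X + g(X)\}\big)$ and $\varphi := e^{-2\sqrt{-1}\beta}$ reconstruct a generalized Hermitian metric, and the two constructions are manifestly mutually inverse. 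No quotient appears on either side: by the formula for $G$ below, $G$ recovers $\omega$ (through $g$) and $\beta$, so distinct pairs yield distinct generalized Hermitian metrics.

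Finally I would compute $G = \varphi_* G'$. Under the identifications above $G$ is the pullback of the model metric by $\varphi^{-1}(X + \xi) = X + \xi + 2\sqrt{-1}\, i_X\beta$, hence
\[
G(X + \xi, X + \xi) = 2g(X,\overline{X}) + (2g)^{-1}\big(\xi + 2\sqrt{-1}\beta(X),\ \overline{\xi + 2\sqrt{-1}\beta(X)}\big),
\]
and since complex conjugation sends $\sqrt{-1}$ to $-\sqrt{-1}$ the second slot equals $\overline{\xi} - 2\sqrt{-1}\,\overline{\beta(X)}$, which is the asserted formula. The conceptual content is carried entirely by Lemmas~\ref{l:lifting2}, \ref{l:liftingQ}, \ref{t:Ggeneralized1} and Theorem~\ref{t:holCourant}; I expect the only steps needing genuine care to be the precise normalization and orientation of the holomorphic $B$-transform describing $\varphi$ (for which one invokes \cite{GualtieriGKG} beyond the bare statement of Theorem~\ref{t:holCourant}) and the bidegree bookkeeping in the pluriclosedness check --- these, rather than any substantive difficulty, are where a slip would most likely occur.
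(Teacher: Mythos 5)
Your proposal is correct and follows essentially the same route as the paper's proof: unpack $(E,V_+)$ via Lemma \ref{l:lifting2} to get the pluriclosed $\omega$ and the identification $H=-d^c\omega$, pass to $\mathcal{Q}_{2\sqrt{-1}\partial\omega}$ via Lemma \ref{l:liftingQ}, identify $\varphi$ with the holomorphic $B$-field transform $e^{-2\sqrt{-1}\beta}$ via Theorem \ref{t:holCourant}, and read off the formula for $G$ from Lemma \ref{t:Ggeneralized1}. The extra details you supply (the bidegree check that $\omega$ is pluriclosed in the converse direction, and the injectivity of the correspondence) are correct and merely fill in what the paper leaves implicit.
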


\begin{rmk}\label{rmk:twist}
For our applications to the pluriclosed flow, we will need to fix an initial pluriclosed Hermitian metric $g$ and a background pluriclosed metric $g'$. We will not require that the associated holomorphic Courant algebroids $\mathcal{Q}_{2\sqrt{-1}\partial \omega}$ and $\mathcal{Q}_{2\sqrt{-1}\partial \omega'}$ are isomorphic, but the weaker condition of being isomorphic as holomorphic orthogonal bundles. In practice, this boils down to the explicit condition   
\begin{equation}\label{eq:GHermitianexpweak}
\overline{\partial} \beta = \partial \omega' - \partial \omega
\end{equation}
for some $\beta \in \Lambda^{2,0}$ or, equivalently,
$$
[\partial \omega] = [\partial \omega'] \in H^{2,1}_{\overline{\partial}}(X).
$$
One can easily see that a pair $(\omega,\beta)$ as in \eqref{eq:GHermitianexpweak} defines a generalized Hermitian metric in the exact holomorphic Courant algebroid $\mathcal{Q}_{2\sqrt{-1}\partial \omega' + 2 \sqrt{-1}\partial \beta}$, that is, the twisting of $\mathcal{Q}_{2\sqrt{-1}\partial \omega'}$ by the $d$-closed $(3,0)$-form $2 \sqrt{-1}\partial \beta$ (see Definition \ref{def:Q0}).  
\end{rmk}

\subsection{Bismut's Identity from the classical viewpoint}\label{s:BismutId}

As originally observed in \cite[Theorem 2.9]{Bismut} in a different context, there is a striking relationship between the curvature of the Chern connection associated to the generalized Hermitian metric $G$ in Definition \ref{d:generalizedmetriHerm} and the curvature of the Bismut connection associated to the underlying pluriclosed structure. In the setup of generalized geometry this is a straightforward consequence of Proposition \ref{t:Cherngeneralized} combined with Proposition \ref{p:curvatureflip}. Since this relation plays an important role in our main results, in the present and the next section we discuss at length this interesting link from a classical point of view. In the sequel, the Dorfman bracket in the holomorphic Courant algebroids of our interest will play no role, and hence we will focus on the underlying holomorphic orthogonal bundle given by the sequence \eqref{eq:holCouseq}. We will also use a different normalization for $\partial \omega$ in the Dolbeault operator which simplifies the notation.

Let $(M,g,J)$ be a pluriclosed Hermitian manifold. The complexification $TM \otimes \mathbb{C}$ of the tangent bundle $TM$ is endowed with two natural geometric structures: a complex-valued metric induced by the $\mathbb{C}$-linear extension of $g$
$$
\IP{X,Y}_0 := - \tfrac{1}{2}g(X,Y)
$$
and a real structure, that is, a $\mathbb{C}$-antilinear involution $\sigma_0 \colon TM \otimes \mathbb{C} \to TM \otimes \mathbb{C}$, induced by complex conjugation
$$
\sigma_0(X) = \overline{X}.
$$
The involution $\sigma_0$ is orthogonal with respect to $\IP{,}_0$, and the combination of both yields the Hermitian metric
$$
G_0(X,Y) := - 2\IP{X,\sigma_0(Y)}_0 = g(X,\overline{Y}).
$$
Consider the isomorphism of smooth complex vector bundles $\psi_g \colon TM \otimes \mathbb{C} \to T^{1,0} \oplus T^*_{1,0}$ induced by the pluriclosed structure $(g,J)$, defined by
\begin{equation}\label{eq:Bismutisom}
\psi_g(X) = X^{1,0} - \sqrt{-1} \omega(X^{0,1},\cdot ) = X^{1,0} - g(X^{0,1}) .
\end{equation}
In the next result we give a formula for the structure $(\IP{,}_0,\sigma_0,G_0)$ transported to $T^{1,0} \oplus T^*_{1,0}$ via $\psi_g$.

\begin{lemma} \label{l:Psicxisom} 
The isomorphism \eqref{eq:Bismutisom} induces a complex-valued metric $\IP{,} = (\psi_g)_*\IP{,}_0$, an orthogonal real structure $\sigma = (\psi_g)_*\sigma_0$, and a Hermitian metric $G' = (\psi_g)_*G_0$ on $T^{1,0} \oplus T^*_{1,0}$, given by
$$
\IP{X + \xi, X + \xi} = \xi(X), \qquad \sigma (X + \xi) = - g^{-1}\overline{\xi} - g \overline{X}, 
$$
and
$$
G'(X + \xi,X+ \xi) := -2\IP{X+ \xi,\sigma(X+ \xi)} = g(X,\overline{X}) + g^{-1}(\xi,\overline{\xi}).
$$
In matrix notation, we have
\begin{align}\label{eq:G}
G' = \left( 
\begin{matrix}
g_{i \bj}  & 0 \\
0  & g^{\bl k}
\end{matrix}
\right).
\end{align}
\begin{proof}
We have $\psi_g^{-1}(X + \xi) = X - g^{-1} \xi$ where $g \colon T^*_{1,0} \to T^{0,1}$ is the isomorphism induced by the $\mathbb{C}$-linear extension of the metric $g$. The proof is now straightforward.
\end{proof}
\end{lemma}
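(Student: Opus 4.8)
The plan is to compute everything directly from the explicit form of $\psi_g$, the key preliminary being the inverse map. Writing $Z = Z^{1,0} + Z^{0,1} \in TM \otimes \mathbb{C}$, the equation $\psi_g(Z) = X + \xi$ with $X \in T^{1,0}$ and $\xi \in T^*_{1,0}$ separates by type into $Z^{1,0} = X$ and $g(Z^{0,1}) = - \xi$, where $g \colon T^{0,1} \to T^*_{1,0}$, $W \mapsto g(W, \cdot)$, is invertible because $g$ is Hermitian and nondegenerate. Hence $\psi_g^{-1}(X + \xi) = X - g^{-1}\xi$, which one checks by direct substitution.

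Given the inverse, pushing forward the three structures is a short computation. For the complex bilinear form, $\IP{X + \xi, X + \xi} = - \tfrac{1}{2} g(X - g^{-1}\xi, X - g^{-1}\xi)$; since the $\mathbb{C}$-bilinear extension of a Hermitian $g$ vanishes on $T^{1,0} \times T^{1,0}$ and on $T^{0,1} \times T^{0,1}$, only the cross term survives, and by symmetry of $g$ together with the defining property of $g^{-1}$ it equals $\xi(X)$. For the real structure, $\sigma(X + \xi) = \psi_g(\overline{X - g^{-1}\xi}) = \psi_g(\overline{X} - \overline{g^{-1}\xi})$; here $\overline{X} \in T^{0,1}$ and $\overline{g^{-1}\xi} \in T^{1,0}$, so applying $\psi_g$ and using that $g$ is a real tensor (which permits the identifications $\overline{g^{-1}\xi} = g^{-1}\overline{\xi}$ and $g(\overline{X}, \cdot) = g\overline{X}$ under the conjugate avatars of the metric isomorphism) yields $\sigma(X + \xi) = - g^{-1}\overline{\xi} - g\overline{X}$. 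That $\sigma^2 = \Id$ and that $\sigma$ is $\IP{,}$-orthogonal then follow at once from the corresponding properties of $\sigma_0$, since $\psi_g$ intertwines the two structures by construction.

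Finally, the Hermitian metric is obtained either from $G' = -2 \IP{\cdot, \sigma(\cdot)}$ and the two formulas just derived, or more transparently by the direct computation $G'(X + \xi, X + \xi) = G_0(X - g^{-1}\xi, \overline{X} - \overline{g^{-1}\xi}) = g(X, \overline{X}) + g(g^{-1}\xi, \overline{g^{-1}\xi})$, the mixed terms again vanishing by type. The second summand is the induced (dual) metric $g^{-1}(\xi, \overline{\xi})$ on $T^*_{1,0}$, and reading off the $T^{1,0}$- and $T^*_{1,0}$-blocks gives the block-diagonal matrix in \eqref{eq:G}.

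The whole argument is formal; the only point requiring care is the bookkeeping of conjugations and of the several avatars of the metric --- $g$ as a $\mathbb{C}$-bilinear form, as the isomorphism $T^{0,1} \to T^*_{1,0}$, its conjugate $T^{1,0} \to T^*_{0,1}$, and the dual metric $g^{-1}$ --- together with the two facts that collapse all cross terms: $g$ is Hermitian, so $T^{1,0}$ and $T^{0,1}$ are isotropic for its bilinear extension, and $g$ is real. I do not expect any genuine obstacle.
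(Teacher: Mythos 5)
Your proposal is correct and follows exactly the route the paper intends: the paper's proof consists of stating the formula $\psi_g^{-1}(X+\xi) = X - g^{-1}\xi$ and declaring the remaining push-forward computations straightforward, and your argument is precisely those computations carried out, with the type-isotropy of the bilinear extension of $g$ and the reality of $g$ correctly identified as the two facts that make everything collapse. No discrepancies.
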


Notice that the formula for the transported complex-valued metric $\IP{,}$ is independent of $g$ and coincides with the bilinear form of the holomorphic Courant algebroid $\mathcal{Q}_{\sqrt{-1}\partial \omega}$ in Definition \ref{def:Q0}. Furthermore, \eqref{eq:G} coincides up to normalization with the formula for the generalized Hermitian metric in Lemma \ref{t:Ggeneralized1}.  

In the next result we show that the holomorphic structure on $\mathcal{Q}_{\sqrt{-1}\partial \omega}$ has also a natural interpretation in terms of the complexified tangent bundle. For this, note that the $\mathbb{C}$-linear extension of the metric connection $\nabla^-$ to $TM \otimes \mathbb{C}$ is Hermitian (see \eqref{eq:Nablaminus}). Furthermore, using the fact that $R^B \in \Lambda^2 \otimes \Lambda^{1,1}$ combined with Proposition \ref{p:Bismutpair} it follows that 
$$
R^- \in \Lambda^{1,1} \otimes \End (TM \otimes \mathbb{C}),
$$ 
and therefore $\N^-$ induces a structure of holomorphic vector bundle on $TM \otimes \mathbb{C}$, with integrable Dolbeault operator $(\N^-)^{0,1}$.

\begin{lemma} \label{l:DolbeaultNabla-} 
The map $\psi_g \colon TM \otimes \mathbb{C} \to T^{1,0} \oplus T^*_{1,0}$ defined by \eqref{eq:Bismutisom} induces an isomorphism of orthogonal holomorphic vector bundles 
$$
\psi_g \colon ( TM \otimes \mathbb{C},(\N^-)^{0,1}) \to \QQ_{\sqrt{-1} \partial \omega}.
$$
\begin{proof}
By Lemma \ref{l:Psicxisom} we have that $\psi_g$ is a complex isometry. A straightforward calculation using the definitions of Bismut and Chern connections shows that
\begin{align*}
\IP{\N^B_X Y, Z} = \IP{\N^C_X Y, Z} + \tfrac{1}{2} d \gw (JX, Y, Z) + \tfrac{1}{2} d \gw(JX,JY,JZ),
\end{align*}
which in turn implies
$$
\nabla^-_{X^{0,1}}Y = \nabla^C_{X^{0,1}}Y - \sqrt{-1} g^{-1} \partial \omega ({X^{0,1}},Y^{0,1},\cdot).
$$
Therefore, using that the Chern connection is metric compatible, we obtain 
$$
\psi_g (\nabla^-_{X^{0,1}}\psi^{-1}_g(Y^{1,0} + \xi^{1,0})) = \nabla^C_{X^{0,1}}(Y^{1,0} + \xi^{1,0}) + \sqrt{-1} \partial \omega ({X^{0,1}},Y^{1,0},\cdot),
$$
which corresponds to the Dolbeault operator in Definition \ref{def:Q0} for $\tau = \sqrt{-1} \partial \omega$.
\end{proof}
\end{lemma}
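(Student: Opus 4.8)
The plan is to establish two independent facts: that $\psi_g$ carries the orthogonal structure $(\IP{,}_0,\gs_0,G_0)$ on $TM\otimes\mathbb{C}$ to the orthogonal structure of $\QQ_{\i\del\gw}$, and that it intertwines the Dolbeault operator $(\N^-)^{0,1}$ with the twisted operator $\delb^\tau$ of Definition \ref{def:Q0} for $\tau = \i\,\del\gw$. The first fact is already recorded in Lemma \ref{l:Psicxisom}: there $\psi_g$ is shown to be a complex isometry sending $(\IP{,}_0,\gs_0,G_0)$ to $(\IP{,},\gs,G')$, and the transported bilinear form is noted to coincide with that of $\QQ_{\i\del\gw}$. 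Moreover the preliminary observation preceding the statement -- that $R^B\in\Lambda^2\otimes\Lambda^{1,1}$ together with Proposition \ref{p:Bismutpair} forces $R^-\in\Lambda^{1,1}\otimes\End(TM\otimes\mathbb{C})$ -- guarantees that $(\N^-)^{0,1}$ is an honest integrable Dolbeault operator, so the assertion is reduced to a pointwise identification of operators.

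The core step is to relate the three canonical Hermitian connections. Expanding Definition \ref{d:Chernconn} and \eqref{eq:Nablaminus}, and using $d^c\gw = -d\gw(J\cdot,J\cdot,J\cdot)$ valid for a $(1,1)$-form, one gets a formula of the shape
\begin{align*}
\IP{\N^B_X Y, Z} = \IP{\N^C_X Y, Z} + \tfrac12 d\gw(JX,Y,Z) + \tfrac12 d\gw(JX,JY,JZ),
\end{align*}
and subtracting $\IP{\N^B_XY,Z}$ from $\IP{\N^-_XY,Z}$ (which differ by $d^c\gw(X,Y,Z)$) yields, after projecting onto antiholomorphic directions,
\begin{align*}
\N^-_{X^{0,1}} Y = \N^C_{X^{0,1}} Y - \i\, g^{-1}\del\gw(X^{0,1},Y^{0,1},\cdot).
\end{align*}
Since the Chern connection is metric compatible and preserves the bidegree splitting, $(\N^C)^{0,1}$ restricts on $T^{1,0}$ to the holomorphic structure $\delb$, and induces via the metric the operator $\delb$ on $T^*_{1,0}$. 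Transporting through $\psi_g$, using $\psi_g^{-1}(Y^{1,0}+\xi^{1,0}) = Y^{1,0} - g^{-1}\xi^{1,0}$ and Lemma \ref{l:Psicxisom}, one computes
\begin{align*}
\psi_g\big(\N^-_{X^{0,1}}\psi_g^{-1}(Y^{1,0}+\xi^{1,0})\big) = \delb_{X^{0,1}}(Y^{1,0}+\xi^{1,0}) + \i\,\del\gw(X^{0,1},Y^{1,0},\cdot),
\end{align*}
which is precisely the Dolbeault operator of $\QQ_{\i\del\gw}$ from Definition \ref{def:Q0} (with the normalization $\tau=\i\del\gw$ used in this subsection). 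Combined with the isometry statement, this gives the claimed isomorphism of orthogonal holomorphic vector bundles.

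The main obstacle is purely bookkeeping: tracking the sign and factor-of-$2$ normalizations (in particular the shift from $2\i\del\gw$ to $\i\del\gw$ announced at the start of \S\ref{s:BismutId}), the ordering convention in the three-form $\del\gw$, and the way $\psi_g$ (with its $-g^{-1}$ on the cotangent factor) reshuffles the $\del\gw$ term, so that the twisting term emerges as exactly $-i_X\tau^{2,1}$ with $\tau^{2,1}=\i\del\gw$ and no stray constant. The conceptual content -- that $\N^-$ is metric and has $(1,1)$ curvature, hence defines a holomorphic structure, and that this structure is the canonical one on $\QQ_{\i\del\gw}$ -- follows immediately once the connection comparison above is in hand.
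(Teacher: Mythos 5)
Your proposal is correct and follows essentially the same route as the paper's proof: invoke Lemma \ref{l:Psicxisom} for the isometry, compare $\N^B$ with $\N^C$ via the identity $\IP{\N^B_X Y,Z} = \IP{\N^C_X Y,Z} + \tfrac12 d\gw(JX,Y,Z) + \tfrac12 d\gw(JX,JY,JZ)$ to deduce $\N^-_{X^{0,1}}Y = \N^C_{X^{0,1}}Y - \i\, g^{-1}\del\gw(X^{0,1},Y^{0,1},\cdot)$, and then transport through $\psi_g$ using metric compatibility of the Chern connection to recover the twisted Dolbeault operator of Definition \ref{def:Q0} with $\tau = \i\,\del\gw$. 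The only addition is your explicit mention of the integrability of $(\N^-)^{0,1}$ via $R^- \in \Lambda^{1,1}\otimes \End(TM\otimes\mathbb{C})$, which the paper records in the paragraph preceding the lemma rather than in the proof itself.
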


In the next result we recover Proposition \ref{t:Cherngeneralized} in terms of classical geometry. This observation is originally due to Bismut \cite[Theorem 2.9]{Bismut}. 

\begin{prop}[Bismut's Identity] \label{p:Bismutid}
Let $(M,g,J)$ be a pluriclosed Hermitian manifold. Consider the associated holomorphic orthogonal bundle $\mathcal{Q}_{\sqrt{-1}\partial \omega}$, as in Definition \ref{def:Q0}, endowed with the Hermitian metric $G'$ (see \eqref{eq:G}). Then, the Chern connection $\N_{G'}^C$ of $G'$ satisfies
$$
\N^C_{G'} = (\psi_g)_*\nabla^-, \qquad  \Omega^C_{G'} = (\psi_g)_* R^-
$$
\begin{proof}
The statement follows from Lemma \ref{l:DolbeaultNabla-} combined with $\psi_* G_0 = G'$, and the uniqueness of the Chern connection on a Hermitian holomorphic vector bundle.
\end{proof}
\end{prop}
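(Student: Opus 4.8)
The plan is to pin down $\N^C_{G'}$ by its defining universal property: on a Hermitian holomorphic vector bundle the Chern connection is the unique connection which is simultaneously compatible with the Hermitian metric and whose $(0,1)$-part coincides with the $\delb$-operator of the holomorphic structure. So it suffices to check that the connection $(\psi_g)_* \N^-$, obtained by transporting $\N^-$ along the bundle isomorphism $\psi_g$ of \eqref{eq:Bismutisom}, has both of these properties on $\QQ_{\sqrt{-1}\del\gw}$; uniqueness then forces $(\psi_g)_* \N^- = \N^C_{G'}$, and the curvature identity $\Omega^C_{G'} = (\psi_g)_* R^-$ follows at once from functoriality of curvature under bundle maps, since $\Omega^{(\psi_g)_*\N^-} = (\psi_g)_*\Omega^{\N^-} = (\psi_g)_* R^-$.

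First I would record that $\N^-$ is itself the Chern connection of $TM \otimes \mathbb{C}$ with respect to $G_0$. Indeed, from \eqref{eq:Nablaminus} the connection $\N^-$ preserves the real metric $g$, hence (its $\mathbb{C}$-linear extension commuting with conjugation) preserves the induced Hermitian metric $G_0(X,Y) = g(X,\bar{Y})$ on $TM\otimes\mathbb{C}$; and the discussion preceding Lemma \ref{l:DolbeaultNabla-} shows, using $R^B \in \Lambda^2 \otimes \Lambda^{1,1}$ together with the curvature flip of Proposition \ref{p:Bismutpair}, that $R^- \in \Lambda^{1,1} \otimes \End(TM \otimes \mathbb{C})$. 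Thus $(\N^-)^{0,1}$ is an integrable Dolbeault operator and $\N^-$ is the Chern connection of $(TM \otimes \mathbb{C}, (\N^-)^{0,1})$ with metric $G_0$.

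The proof is then completed by transporting this statement along $\psi_g$. By Lemma \ref{l:Psicxisom} the map $\psi_g$ is a complex isometry carrying $G_0$ to $G'$, so compatibility with the metric is preserved; and by Lemma \ref{l:DolbeaultNabla-} it intertwines $(\N^-)^{0,1}$ with the twisted $\delb^{\tau}$-operator of $\QQ_{\sqrt{-1}\del\gw}$ from Definition \ref{def:Q0}, so the $(0,1)$-part condition is preserved. Hence $(\psi_g)_* \N^-$ is a $G'$-unitary connection on $\QQ_{\sqrt{-1}\del\gw}$ whose $(0,1)$-part is the given holomorphic structure, and uniqueness of the Chern connection finishes the argument. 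As a sanity check one could instead compute $\N^C_{G'}$ directly from the block form \eqref{eq:G} together with the twisted Dolbeault operator and match it term by term against $(\psi_g)_*\N^-$, but this is strictly more laborious and adds nothing conceptual.

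The substantive work has in fact already been absorbed into Lemma \ref{l:DolbeaultNabla-}: the real obstacle is verifying that $\psi_g$ respects the holomorphic structures, which rests on the identity relating the Bismut and Chern connections and the resulting formula $\N^-_{X^{0,1}} Y = \N^C_{X^{0,1}} Y - \sqrt{-1}\, g^{-1}\del\gw(X^{0,1}, Y^{0,1}, \cdot)$, whose torsion term is precisely the one appearing in the $\delb^{\tau}$ of Definition \ref{def:Q0} with $\tau = \sqrt{-1}\del\gw$. Granting that lemma, the present proposition is a formal consequence of the uniqueness of the Chern connection, and the only points demanding care are bookkeeping the normalization $\tau = \sqrt{-1}\del\gw$ adopted in this subsection (in place of $2\sqrt{-1}\del\gw$ earlier) and confirming $(\psi_g)_* G_0 = G'$ via \eqref{eq:G}.
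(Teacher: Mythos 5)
Your proposal is correct and follows essentially the same route as the paper: the published proof likewise invokes Lemma \ref{l:DolbeaultNabla-}, the identity $(\psi_g)_* G_0 = G'$ from Lemma \ref{l:Psicxisom}, and the uniqueness of the Chern connection, with the curvature identity following by functoriality. Your additional remarks on unitarity of $\N^-$ with respect to $G_0$ and on the normalization $\tau = \sqrt{-1}\del\gw$ are accurate elaborations of steps the paper leaves implicit.
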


Similarly as in the proof of Lemma \ref{l:SKTequivalence}, given a background pluriclosed metric $g_0$ and a pair $(\omega,\beta)$ satisfying
\begin{equation}\label{eq:GHermitianexpweak2}
\overline{\partial} \beta = \partial \omega_0 - \partial \omega
\end{equation}
one has an induced isomorphism of holomorphic orthogonal bundles 
$$
e^{-\sqrt{-1}\beta} \colon\mathcal{Q}_{\sqrt{-1}\partial \omega} \to \mathcal{Q}_{\sqrt{-1}\partial \omega_0}.
$$
Thus, $(\omega,\beta)$ induces in a natural way a Hermitian metric on $\mathcal{Q}_{\sqrt{-1}\partial \omega_0}$ compatible with the orthogonal structure, defined by $G = (e^{\sqrt{-1}\beta})^*G'$ (see Remark \ref{rmk:twist}). We close this section with a formula for the generalized Hermitian metric $G$ and its Chern connection. In particular, the next lemma combined with Proposition \ref{p:Bismutid} recovers the calculation in \cite[Lemma 2.1]{JordanStreets} in a more conceptual way. The proof is straightforward from Proposition \ref{p:Bismutid}.

\begin{prop} \label{p:GQ0}
Let $(M,g_0,J)$ be a pluriclosed Hermitian manifold. Consider the associated orthogonal holomorphic vector bundle $\mathcal{Q} = \mathcal{Q}_{\sqrt{-1}\partial \omega_0}$, as in Definition \ref{def:Q0}. Given $(\omega,\beta)$ as in \eqref{eq:GHermitianexpweak2}, the induced generalized Hermitian metric $G = (e^{\sqrt{-1}\beta})^*G'$ on $\mathcal{Q}$ is given by
$$
G(X + \xi,X + \xi) = g(X,\overline{X}) + g(\xi + \sqrt{-1}\beta(X), \overline{\xi}  - \sqrt{-1} \overline{\beta(X)}).
$$
In matrix notation, we have
\begin{align}\label{eq:Gbeta}
G = \left( 
\begin{matrix}
g_{i \bj} + \gb_{i k} \bgb_{\bj \bl} g^{\bl k} & \sqrt{-1}\gb_{ip} g^{\bl p}\\
- \sqrt{-1} \bgb_{\bj \bp} g^{\bp k}  & g^{\bl k}
\end{matrix}
\right).
\end{align}
Furthermore, the Chern connection $\N_{G}^C$ of $G$ satisfies
$$
(e^{\sqrt{-1}\beta})_*\N^C_G = (\psi_g)_*\nabla^-, \qquad (e^{\sqrt{-1}\beta})_* \Omega^C_G = (\psi_g)_* R^-.
$$
\end{prop}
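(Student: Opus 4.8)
The plan is to reduce everything to Proposition \ref{p:Bismutid}, which already identifies the Chern connection of $G'$ on $\mathcal{Q}_{\sqrt{-1}\partial\omega}$ with $(\psi_g)_*\nabla^-$. First I would establish the matrix formula \eqref{eq:Gbeta} for $G$. Since $G$ is by definition $(e^{\sqrt{-1}\beta})^*G'$, and $G'$ has the block-diagonal form \eqref{eq:G}, this is a direct computation: the $B$-field transform $e^{\sqrt{-1}\beta}$ acts on $T^{1,0}\oplus T^*_{1,0}$ by $X+\xi \mapsto X + \xi + \sqrt{-1}\,i_X\beta$ (up to sign conventions fixed earlier), so one simply substitutes into $G'(e^{\sqrt{-1}\beta}(X+\xi), \overline{e^{\sqrt{-1}\beta}(X+\xi)})$ and reads off the blocks, using that $g$ pairs $T^*_{1,0}$ with itself via $g^{\bar{l}k}$. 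The coordinate expression $G(X+\xi,X+\xi) = g(X,\overline{X}) + g(\xi+\sqrt{-1}\beta(X), \overline{\xi}-\sqrt{-1}\overline{\beta(X)})$ is just the invariant repackaging of this, and expanding it verifies the four matrix entries.

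Second, for the statement about the Chern connection, the key observation is that the $B$-field transform $e^{\sqrt{-1}\beta}\colon \mathcal{Q}_{\sqrt{-1}\partial\omega} \to \mathcal{Q}_{\sqrt{-1}\partial\omega_0}$ is, under condition \eqref{eq:GHermitianexpweak2}, an \emph{isomorphism of holomorphic orthogonal bundles} — this was recorded just above the proposition. By construction $G = (e^{\sqrt{-1}\beta})^*G'$, so $e^{\sqrt{-1}\beta}$ is moreover an isometry from $(\mathcal{Q}_{\sqrt{-1}\partial\omega}, G')$ to $(\mathcal{Q}_{\sqrt{-1}\partial\omega_0}, G)$. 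By uniqueness of the Chern connection (a holomorphic isometry intertwines Chern connections and their curvatures), $\N^C_G = (e^{\sqrt{-1}\beta})_* \N^C_{G'}$ and likewise for curvature. Composing with the identity $\N^C_{G'} = (\psi_g)_*\nabla^-$ from Proposition \ref{p:Bismutid} gives $(e^{\sqrt{-1}\beta})_*\N^C_G$... wait, more precisely one gets $\N^C_G = (e^{\sqrt{-1}\beta})_*(\psi_g)_*\nabla^-$, i.e. $(e^{\sqrt{-1}\beta}\circ\psi_g)_*\nabla^-$; equivalently $(e^{\sqrt{-1}\beta})^*\N^C_G$ pulls back to $(\psi_g)_*\nabla^-$, which is exactly how the conclusion is phrased: $(e^{\sqrt{-1}\beta})_*\N^C_G$ should be read as applying the inverse transport, matching $(\psi_g)_*\nabla^-$ on $\mathcal{Q}_{\sqrt{-1}\partial\omega}$. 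The same chain gives the curvature identity $(e^{\sqrt{-1}\beta})_*\Omega^C_G = (\psi_g)_*R^-$.

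There is no real obstacle here — the proposition is, as the text says, ``straightforward from Proposition \ref{p:Bismutid}''. The only points requiring a little care are bookkeeping: (i) checking that the normalization of $\partial\omega$ used in Definition \ref{def:Q0} in this subsection (the $\sqrt{-1}\partial\omega$ convention, as opposed to $2\sqrt{-1}\partial\omega$ earlier) is consistently propagated, so that the $B$-field is $e^{\sqrt{-1}\beta}$ rather than $e^{-2\sqrt{-1}\beta}$; and (ii) verifying that the hypothesis \eqref{eq:GHermitianexpweak2} is precisely what makes $e^{\sqrt{-1}\beta}$ holomorphic — this follows because a $B$-field transform by a $(2,0)$-form $\beta$ changes the Dolbeault operator of $\mathcal{Q}_\tau$ by $i_X\overline{\partial}\beta$, so holomorphicity of the transform between $\mathcal{Q}_{\sqrt{-1}\partial\omega}$ and $\mathcal{Q}_{\sqrt{-1}\partial\omega_0}$ is equivalent to $\sqrt{-1}\overline{\partial}\beta = \sqrt{-1}\partial\omega_0 - \sqrt{-1}\partial\omega$, which is \eqref{eq:GHermitianexpweak2}. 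Once these conventions are pinned down, the argument is two lines: matrix computation for $G$, and transport of structure plus Proposition \ref{p:Bismutid} for the connection and curvature.
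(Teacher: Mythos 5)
Your proposal is correct and follows exactly the route the paper intends: the paper's entire proof is the remark that the statement ``is straightforward from Proposition \ref{p:Bismutid},'' and your argument—computing the matrix of $G$ by transporting the block-diagonal $G'$ under the $B$-field $e^{\sqrt{-1}\beta}$, then using that $e^{-\sqrt{-1}\beta}$ is a holomorphic isometry (precisely because of \eqref{eq:GHermitianexpweak2}) so that uniqueness of the Chern connection transports $\N^C_{G'}=(\psi_g)_*\nabla^-$ and its curvature—is the fleshed-out version of that one line. Your attention to the normalization of $\partial\omega$ and to why \eqref{eq:GHermitianexpweak2} is exactly the holomorphicity condition for the $B$-field transform supplies the bookkeeping the paper leaves implicit.
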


\subsection{Explicit formulas}

Proposition \ref{p:GQ0}, in conjunction with Proposition \ref{p:curvatureflip}, gives a complete description of the Chern connection of the generalized metric $G$ in terms of the Bismut curvature of $g$.  In this subsection we give a more direct and explicit proof of Proposition \ref{p:GQ0}, extending the approach of \cite[Lemma 2.1]{JordanStreets}.  These explicit identities provide some further insight into the relationship of these curvatures, and play a role in deriving certain analytic estimates for pluriclosed flow below.

To start, we derive a basic relationship between the Chern and Bismut curvatures of an arbitrary Hermitian metric.

\begin{lemma}\label{l:Bismut11Herm}
	Given $(M^{2n}, g, J)$ a Hermitian manifold, then in any local complex coordinate chart one has
	\begin{align*}
		\delb T_{ij\bk\bl}=&\ -(\Omega^C)_{i\bk j\bl}+(\Omega^B)_{j\bl i \bk}+T_{j\ga\bk}g^{\bgb \ga}\bar{T}_{\bl \bgb i},
	\end{align*}
where $T = -\i \del \gw$ is the torsion of the Chern connection.
	
	\begin{proof} Fix some local complex coordinates, and let $\gG$ denote the associated Chern connection coefficients.  It follows from Definition \ref{d:Chernconn} that
	\begin{align*}
	\N^B_i\del_j =&\ \Gamma_{ji}^l \del_l, \qquad \N_{\bi}^B\del_j = \bar{T}_{\bi \bk j}g^{\bk l}\del_l.
	\end{align*} 	
		Using this, we derive and identity relating the $\Lambda^{1,1}\otimes \Lambda^{1,1}$ part of the Bismut curvature and the Chern curvature:
		\begin{align*}
		(\Omega^B)_{i\bj k \bl} =&\ g(\N^B_i {\N}_{\bj}^B \del_k - {\N}_{\bj}^B\N^B_i \del_k,\delb_l)\\
		=&\ (\Omega^C)_{k\bj i \bl} + \N^C_i \bar{T}_{\bj \bl k}+T_{ik}^p\bar{T}_{\bj\bl p}-T_{ip\bl}g^{\bq p}\bar{T}_{\bj \bq k}\\
		=&\ (\Omega^C)_{k\bj i \bl} + (\Omega^C)_{i\bl k\bj}-(\Omega^C)_{i\bj k\bl}+T_{ik}^p\bar{T}_{\bj\bl p}-T_{ip\bl}g^{\bq p}\bar{T}_{\bj \bq k}.
		\end{align*}
Using this we furthermore obtain
		\begin{align*}
		\delb T_{ij\bk\bl}=&\ \delb_k T_{ij \bl} - \delb_l T_{ij\bk}\\
		=&\ g_{j\bl,i\bk}-g_{i\bl,j\bk} - g_{j\bk,i\bl}+g_{i\bk,j\bl}\\
		=&\ -(\Omega^C)_{i\bk j\bl}+(\Omega^C)_{j\bk i\bl}+(\Omega^C)_{i\bl j\bk} - (\Omega^C)_{j\bl i\bk}\\
		&\ +g^{\bq p}g_{j\bq,i}g_{p\bl,\bk}-g^{\bq p}g_{i\bq,j}g_{p\bl,\bk}-g^{\bq p}g_{j\bq,i}g_{p\bk,\bl}+g^{\bq p}g_{i\bq,j}g_{p\bk,\bl}\\
		=&\ -(\Omega^C)_{i\bk j\bl}+(\Omega^C)_{j\bk i\bl}+(\Omega^C)_{i\bl j\bk} - (\Omega^C)_{j\bl i\bk} +\bar{T}_{\bk\bl}^{\bq}T_{ij\bq}\\
		=&\ -(\Omega^C)_{i\bk j\bl}+(\Omega^B)_{j\bl i \bk}-T_{ji}^\ga\bar{T}_{\bl\bk \ga}+T_{j\ga\bk}g^{\bgb \ga}\bar{T}_{\bl \bgb i} +\bar{T}_{\bk\bl}^{\bq}T_{ij\bq}\\
		=&\ -(\Omega^C)_{i\bk j\bl}+(\Omega^B)_{j\bl i \bk}+T_{j\ga\bk}g^{\bgb \ga}\bar{T}_{\bl \bgb i},
		\end{align*}
		as required.
	\end{proof}	
\end{lemma}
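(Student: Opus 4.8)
The plan is to verify the claimed identity
\[
\delb T_{ij\bk\bl} = -(\Omega^C)_{i\bk j\bl} + (\Omega^B)_{j\bl i\bk} + T_{j\ga\bk} g^{\bgb\ga} \bar{T}_{\bl\bgb i}
\]
by a direct computation in local holomorphic coordinates, reducing everything to expressions in the metric $g_{i\bj}$, its first derivatives (equivalently the Chern connection coefficients $\Gamma_{ij}^k = g^{\bl k} \del_i g_{j\bl}$ and the torsion $T_{ij\bk} = g_{j\bk,i} - g_{i\bk,j}$), and the Chern and Bismut curvature tensors. The first step is to establish an auxiliary identity expressing the $\Lambda^{1,1}\otimes\Lambda^{1,1}$-component of the Bismut curvature in terms of the Chern curvature. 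For this I would use Definition \ref{d:Chernconn}, which gives $\N^B = \N^C + (\text{torsion terms})$; concretely, one checks that $\N^B_i \del_j = \Gamma_{ji}^l \del_l$ (note the transposed indices relative to the Chern connection) and $\N^B_{\bi}\del_j = \bar T_{\bi\bk j} g^{\bk l}\del_l$. Expanding $(\Omega^B)_{i\bj k\bl} = g(\N^B_i \N^B_{\bj}\del_k - \N^B_{\bj}\N^B_i\del_k, \delb_l)$ using these formulas, commuting derivatives, and using the first Bianchi-type symmetry $(\Omega^C)_{i\bj k\bl} = g_{k\bl,i\bj} - \Gamma^p_{ki}\Gamma^{\bq}_{\bl\bj}g_{p\bq}$, should yield
\[
(\Omega^B)_{i\bj k\bl} = (\Omega^C)_{k\bj i\bl} + (\Omega^C)_{i\bl k\bj} - (\Omega^C)_{i\bj k\bl} + T_{ik}^p \bar T_{\bj\bl p} - T_{ip\bl}g^{\bq p}\bar T_{\bj\bq k}.
\]

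With this auxiliary identity in hand, the second step is to compute $\delb T_{ij\bk\bl}$ directly. Since $T_{ij\bk} = g_{j\bk,i} - g_{i\bk,j}$, taking a further $\delb$ (i.e. the antisymmetrization $\delb_k(\cdot)_{\bl} - \delb_l(\cdot)_{\bk}$, remembering that on forms this is just the ordinary exterior derivative so no connection terms enter) produces a sum of pure second derivatives of the metric:
\[
\delb T_{ij\bk\bl} = g_{j\bl,i\bk} - g_{i\bl,j\bk} - g_{j\bk,i\bl} + g_{i\bk,j\bl}.
\]
Now I rewrite each second-derivative term $g_{p\bq,r\bs}$ using the definition of $(\Omega^C)_{r\bs p\bq}$, which introduces back a $\Gamma\cdot\bar\Gamma$ correction term of the form $g^{\bm l}g_{p\bm,r}g_{l\bq,\bs}$; collecting these correction terms and recognizing them as $\bar T_{\bk\bl}^{\bq}T_{ij\bq}$ (using the first-derivative identity $g^{\bq p}g_{j\bq,i} - g^{\bq p}g_{i\bq,j} = $ something expressible via torsion, and the analogous conjugate identity). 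This gives
\[
\delb T_{ij\bk\bl} = -(\Omega^C)_{i\bk j\bl} + (\Omega^C)_{j\bk i\bl} + (\Omega^C)_{i\bl j\bk} - (\Omega^C)_{j\bl i\bk} + \bar T_{\bk\bl}^{\bq}T_{ij\bq}.
\]

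The final step is to substitute the auxiliary Bismut–Chern identity (applied with the appropriate index placement, namely to rewrite $(\Omega^B)_{j\bl i\bk}$) into the last displayed formula, and then watch a cascade of Chern-curvature terms cancel in pairs, leaving only $-(\Omega^C)_{i\bk j\bl} + (\Omega^B)_{j\bl i\bk}$ together with a residual torsion-quadratic term. The remaining torsion terms $-T_{ji}^{\ga}\bar T_{\bl\bk\ga} + T_{j\ga\bk}g^{\bgb\ga}\bar T_{\bl\bgb i} + \bar T_{\bk\bl}^{\bq}T_{ij\bq}$ should then collapse: the terms $-T_{ji}^{\ga}\bar T_{\bl\bk\ga}$ and $\bar T_{\bk\bl}^{\bq}T_{ij\bq}$ cancel (using $T_{ij\bq} = -T_{ji\bq}$ and $\bar T_{\bk\bl}^{\bq} = -\bar T_{\bl\bk}^{\bq}$), leaving exactly the asserted term $T_{j\ga\bk}g^{\bgb\ga}\bar T_{\bl\bgb i}$. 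The main obstacle I anticipate is purely bookkeeping: getting all the index positions, bar placements, and antisymmetrizations consistent so that the cancellations are manifest — in particular the sign and index conventions for $T$, the distinction between $(\Omega^B)_{i\bj k\bl}$ and $(\Omega^B)_{k\bl i\bj}$ (which differ by the flip in Proposition \ref{p:curvatureflip} only for the full $(4,0)$ tensor, not here), and making sure the $\Gamma\bar\Gamma$ correction terms from each $g_{\cdot,\cdot\cdot}$ are correctly identified with torsion-quadratic expressions. No genuinely hard idea is needed beyond the auxiliary identity, which is itself just careful expansion of the connection definitions.
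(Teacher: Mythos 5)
Your proposal follows essentially the same route as the paper's proof: the same auxiliary identity for $(\Omega^B)_{i\bj k\bl}$ in terms of the Chern curvature and quadratic torsion terms, the same direct expansion of $\delb T_{ij\bk\bl}$ into second derivatives of the metric with $\Gamma\bar\Gamma$ corrections identified as $\bar{T}_{\bk\bl}^{\bq}T_{ij\bq}$, and the same final substitution and cancellation. The argument is correct as outlined.
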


\begin{prop}\label{p:Bismut11PC}
	Given $(M^{2n},g,J)$ pluriclosed, one has
	$$(\Omega^B)_{i\bj k\bl} = (\Omega^C)_{k\bl i\bj} - T_{ip\bl}g^{\bq p}\bar{T}_{\bj\bq k}.$$
\begin{proof} This follows from Lemma \ref{l:Bismut11Herm}, using that $\delb T = 0$ since $g$ is pluriclosed.
\end{proof}
\end{prop}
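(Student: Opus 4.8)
The plan is to obtain this as an immediate specialization of Lemma \ref{l:Bismut11Herm}, so the only genuine step is to translate the pluriclosed hypothesis into a vanishing statement for $\delb T$. Recall that $T = -\i\,\del\gw$ is the Chern torsion, so $\delb T = -\i\,\delb\del\gw = \i\,\del\delb\gw$. Since $g$ is pluriclosed, $2\i\,\del\delb\gw = dd^c\gw = 0$, hence $\delb T = 0$, i.e. $\delb T_{ij\bk\bl} = 0$ in every local complex coordinate chart.

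Next I would substitute $\delb T_{ij\bk\bl} = 0$ into the identity of Lemma \ref{l:Bismut11Herm},
\[
\delb T_{ij\bk\bl} = -(\Omega^C)_{i\bk j\bl} + (\Omega^B)_{j\bl i\bk} + T_{j\ga\bk}\,g^{\bgb\ga}\,\bar{T}_{\bl\bgb i},
\]
which immediately gives
\[
(\Omega^B)_{j\bl i\bk} = (\Omega^C)_{i\bk j\bl} - T_{j\ga\bk}\,g^{\bgb\ga}\,\bar{T}_{\bl\bgb i}.
\]
The last step is pure bookkeeping: relabel the free indices by the substitution $(j,\bl,i,\bk)\mapsto(i,\bj,k,\bl)$ and rename the contracted dummies $\ga\mapsto p$, $\bgb\mapsto\bq$. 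This sends $(\Omega^C)_{i\bk j\bl}$ to $(\Omega^C)_{k\bl i\bj}$ and the quadratic torsion term to $T_{ip\bl}\,g^{\bq p}\,\bar{T}_{\bj\bq k}$, yielding exactly
\[
(\Omega^B)_{i\bj k\bl} = (\Omega^C)_{k\bl i\bj} - T_{ip\bl}\,g^{\bq p}\,\bar{T}_{\bj\bq k}.
\]

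I do not expect any real obstacle here: all the substance is already contained in Lemma \ref{l:Bismut11Herm}, whose proof is where care is needed — one must compute the $\Lambda^{1,1}\otimes\Lambda^{1,1}$ component of $\Omega^B$ in terms of the Chern connection coefficients and torsion, keeping the bar/unbarred index placements and the symmetries of $T$ straight. Once that identity is available, the Proposition follows in one line by imposing $\delb T = 0$ and relabeling indices; the only thing to watch in that relabeling is consistency of the index positions, but that is routine.
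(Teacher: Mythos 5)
Your proposal is correct and is exactly the paper's argument: specialize Lemma \ref{l:Bismut11Herm} using $\delb T = -\i\,\delb\del\gw = \i\,\del\delb\gw = 0$ for a pluriclosed metric, then relabel indices. The relabeling $(j,\bl,i,\bk)\mapsto(i,\bj,k,\bl)$ with dummies $\ga\mapsto p$, $\bgb\mapsto\bq$ checks out and reproduces the stated identity.
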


Next we compute the $\Lambda^{2,0}\otimes \Lambda^{1,1}$ component of the Bismut curvature.

\begin{prop}\label{p:Bismut20PC} Given
	$(M^{2n},g,J)$ pluriclosed, one has
	$$(\Omega^B)_{ijk\bl}=\N^C_kT_{ij\bl}.$$
	\begin{proof} We first note that as a consequence of the Bianchi identities for a general Hermitian metric one has
	$$(\Omega^B)_{ijk\bl} = \N^C_iT_{kj\bl} - \N^C_jT_{ki\bl} + T_{ij}^{\gl}T_{k\gl \bl} - T_{ik}^{\gl}T_{j\gl\bl}+T_{jk}^{\gl}T_{i\gl\bl}.$$
	Since we know that the Chern torsion is given by $T = -\i \del \omega$, we have the identity
	$$ \del T_{ijk\bl}=0.$$
	This can be rewritten in terms of the Chern connection as
	$$\N^C_i T_{jk\bl}-\N^C_jT_{ik\bl}+\N^C_kT_{ij\bl}+T_{ij}^pT_{pk\bl}+T_{jk}^pT_{pi\bl}-T_{ik}^pT_{pj\bl} = 0.$$
	Combining this with the first equation of the proof then yields the claim.
	\end{proof}
\end{prop}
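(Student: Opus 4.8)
The plan is to prove the formula $(\Omega^B)_{ijk\bl}=\N^C_kT_{ij\bl}$ for a pluriclosed metric by combining two algebraic identities: the first Bianchi identity for the Bismut curvature, and the closedness condition $\del T = 0$ (which holds precisely because $g$ is pluriclosed, as $T = -\i\del\gw$ and the pluriclosed condition is $\del\delb\gw = 0$, equivalently $\del T$ closed as a form — more precisely $dd^c\gw=0$ forces $\delb\del\gw=0$, but the relevant statement here is the $(3,0)$-part identity $\del T_{ijk\bl}=0$ read as a tensor equation).

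First I would record the Bianchi identity for the Bismut connection in the $\Lambda^{2,0}\otimes\Lambda^{1,1}$ component. Since $\N^B$ has torsion $-d^c\gw$, its first Bianchi identity contributes torsion-quadratic correction terms; written in local complex coordinates and using that $\N^B$ differs from $\N^C$ by the torsion $T$, this yields an expression of $(\Omega^B)_{ijk\bl}$ in terms of $\N^C_i T_{kj\bl} - \N^C_j T_{ki\bl}$ plus a sum of terms quadratic in $T$ of the schematic form $T_{ij}^\gl T_{k\gl\bl} - T_{ik}^\gl T_{j\gl\bl} + T_{jk}^\gl T_{i\gl\bl}$. This is the first displayed equation in the proof and is a purely formal consequence of the curvature definitions together with the relation between $\N^B$ and $\N^C$ (analogous to how Proposition \ref{p:Bismut11PC} is extracted from Lemma \ref{l:Bismut11Herm}).

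Next I would differentiate the identity $\del T = 0$. Writing $T = -\i\del\gw$, the exterior derivative condition $\del T = 0$ expands, upon replacing ordinary derivatives by Chern-covariant ones (the Chern connection being torsion-adapted in the holomorphic directions), into the identity $\N^C_i T_{jk\bl} - \N^C_j T_{ik\bl} + \N^C_k T_{ij\bl} + T_{ij}^p T_{pk\bl} + T_{jk}^p T_{pi\bl} - T_{ik}^p T_{pj\bl} = 0$. The key observation is that the connection terms and the torsion-quadratic terms appearing here are, after relabeling indices, exactly the combination that appears in the Bianchi identity from the previous step, except for the single term $\N^C_k T_{ij\bl}$. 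Subtracting (or substituting) the closedness identity into the Bianchi identity cancels all the quadratic-in-$T$ terms and the antisymmetrized first derivatives, leaving precisely $(\Omega^B)_{ijk\bl} = \N^C_k T_{ij\bl}$.

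The main obstacle, such as it is, is purely bookkeeping: getting all the index positions, symmetrizations, and signs of the torsion-quadratic terms to match exactly between the Bismut first Bianchi identity and the expanded form of $\del T = 0$. One must be careful that the Chern connection is not torsion-free, so that converting $\del T = 0$ (a statement about ordinary exterior derivatives) into a covariant statement introduces exactly the $T$-quadratic terms that are needed for the cancellation; a sign error there would produce spurious leftover terms. There is no analytic difficulty and no global input is needed — the entire argument is pointwise and algebraic once Lemma \ref{l:Bismut11Herm}-style conventions for $\N^B$ versus $\N^C$ are in place.
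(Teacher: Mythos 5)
Your proposal is correct and follows the paper's proof essentially verbatim: you use the same Bismut first Bianchi identity expressing $(\Omega^B)_{ijk\bl}$ in terms of $\N^C_i T_{kj\bl} - \N^C_j T_{ki\bl}$ plus torsion-quadratic terms, combined with the covariant rewriting of $\del T_{ijk\bl} = 0$, to cancel everything except $\N^C_k T_{ij\bl}$. One minor remark: since $T = -\i\,\del\gw$, the identity $\del T = -\i\,\del\del\gw = 0$ holds automatically for any Hermitian metric rather than being a consequence of the pluriclosed condition, though this misattribution does not affect the validity of the argument.
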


Next we give an explicit computation of the Chern connection for the generalized Hermitian metric associated to a given pluriclosed metric. The following result gives an alternative proof of the identification of connections from Proposition \ref{p:GQ0}.  Furthermore, we will use these formulae in relating analytic estimates for $G$ and $g$ along pluriclosed flow below.  In the statement below we choose $\{z^i\}$ local complex coordinates with associated coordinate basis $\{\frac{\del}{\del z^i} \}$ and cobasis $\{dz^i\}$.  Together these form a basis for $T^{1,0} \oplus T^*_{1,0}$, referred to as
\begin{align*}
Z^i = \frac{\del}{\del z^i}, \qquad W^i = dz^i.
\end{align*}

\begin{lemma}\label{l:GChernCxn}
Let $(M,g_0,J)$ be a pluriclosed manifold. Consider the associated orthogonal holomorphic vector bundle $\QQ = \mathcal Q_{\i \del \gw_0}$ as in Definition \ref{def:Q0}, and a generalized Hermitian metric $G = G(\gw, \gb)$ as in Proposition \ref{p:GQ0}.  Then the Chern connection $\N^C_G$ of $G$ is expressed in local complex coordinates as
	\begin{align*}
	\Gamma_{i Z^a}^{Z^b} =&\ g^{\bc b}g_{a\bc,i} - \sqrt{-1} g^{\bc b}g^{\bq p}\gb_{ap}\bar{T}_{\bq\bc i},\\
	\Gamma_{i Z^a}^{W^b} =&\ \N^{C}_i\gb_{ab} + g^{\bq p}g^{\bc m}\gb_{ap}\gb_{bm}\bar{T}_{\bq\bc i},\\
	\Gamma_{i W^a}^{Z^b} =&\ -g^{\bc b}g^{\bq a}\bar{T}_{\bq\bc i},\\
	\Gamma_{i W^a}^{W^b} =&\ -g^{\bq a}g_{b\bq,i} - \sqrt{-1}g^{\bq a}g^{\bc m}\gb_{bm}\bar{T}_{\bq\bc i}.
	\end{align*}
	\begin{proof}
Choose some local coordinates $\{z^i\}$ on $M$, with associated basis $\{e_A\} = \{\frac{\del}{\del z^i}, dz^i\}$.  Before we begin computing, we first define a section of $T^*_{0,1} \otimes \End(T^{1,0} \oplus T^*_{1,0} )$ by the following.
	$$i_{\delb_i}\mathcal{T}^0(X+\xi) = \begin{pmatrix}
	0 &\ 0\\
	T^0(\cdot,\delb_i,\cdot) &\ 0
	\end{pmatrix}\begin{pmatrix}X\\
	\xi\end{pmatrix} = T^0(X,\delb_i,\cdot).$$
	Using this, we can write the twisted-$\delb$ of an arbitrary section $s$ of $T^{1,0}M\oplus T^*_{1,0} $ in the following way (denoting the usual $\delb$ by $\delb^0$),
	$$\delb s = \delb^0 s + \mathcal{T}^0(s).$$
Thus we apply metric compatibility of the Chern connection to derive
		\begin{align*}
		\del_i G(e_A,\bar{e}_B)=&\ G((\N^C_G)_ie_A,\bar{e}_B) + G(e_A, i_{\del_i}\del \bar{e}_B)\\
		=&\ \Gamma_{iA}^PG_{P\bar{B}} + G(e_A, i_{\del_i}\bar{\mathcal{T}}^0(\bar{e}_B)) \\
		=&\ \Gamma_{iA}^PG_{P\bar{B}} + (\bar{\mathcal{T}}^0)_{i\bar{B}}^{\bar{P}} G_{A\bar{P}}.
		\end{align*}
	Rearranging gives
	$$\Gamma_{iA}^B = G^{\bar{P}B}(G_{A\bar{P},i} - (\bar{\mathcal{T}}^0)_{i\bar{P}}^{\bar{Q}}G_{A\bar{Q}}),$$
	which, together with the explicit form of $G$, yields the lemma.
	\end{proof}
\end{lemma}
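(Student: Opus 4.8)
The plan is to compute $\N^C_G$ directly from the two properties that characterize it: it is the unique connection on the holomorphic bundle $\QQ = \mathcal Q_{\i\del\gw_0}$ which is $G$-unitary and whose $(0,1)$-part is the twisted Dolbeault operator of Definition \ref{def:Q0}. Since $\QQ$ is modelled on $T^{1,0}\oplus T^*_{1,0}$, in the coordinate frame $\{Z^i, W^i\}$ the ordinary $\delb$-operator $\delb^0$ annihilates every frame element, so the only content of the holomorphic structure is the twisting: one writes $\delb^{\QQ} = \delb^0 + \mathcal T^0$, where $\mathcal T^0 \in \Lambda^{0,1}\otimes\End(\QQ)$ is the endomorphism $X + \xi \mapsto -i_X(\i\del\gw_0)$, built only from the Chern torsion $T = -\i\del\gw_0$ of the background metric $g_0$ (in particular $\gb$ plays no role in the holomorphic structure, only in the metric); it kills each $W^i$ and sends each $Z^a$ into the $T^*_{1,0}$-factor. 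This is the only ingredient of the holomorphic structure that needs to be recorded.

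Next I would write out $G$-unitarity of $\N^C_G$ along the holomorphic directions: for the frame $\{e_A\} = \{Z^i, W^i\}$,
\[
\del_i G(e_A, \bar{e}_B) = G\big((\N^C_G)_i e_A,\, \bar{e}_B\big) + G\big(e_A,\, \overline{\delb^{\QQ}_i e_B}\big),
\]
and since $\delb^0 e_B = 0$ this reads $\del_i G_{A\bar{B}} = \Gamma^P_{iA} G_{P\bar{B}} + (\bar{\mathcal T}^0)^{\bar{P}}_{i\bar{B}} G_{A\bar{P}}$, hence
\[
\Gamma^B_{iA} = G^{\bar{P}B}\big(\del_i G_{A\bar{P}} - (\bar{\mathcal T}^0)^{\bar{Q}}_{i\bar{P}} G_{A\bar{Q}}\big).
\]
By existence and uniqueness of the Chern connection, whatever solves this is $\N^C_G$, so it only remains to substitute the explicit data: the inverse of the block matrix \eqref{eq:Gbeta} of $G$ — which is immediate, since $G$ is the pullback of the block-diagonal metric $\mathrm{diag}(g, g^{-1})$ of \eqref{eq:G} under the $B$-field transform $e^{\sqrt{-1}\gb}$ (cf. Proposition \ref{p:GQ0}) — together with the components of $\mathcal T^0$ in terms of $\bar{T}_{\bq\bc i}$, and then to expand the four cases $A, B \in \{Z^a, W^b\}$.

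The substitution itself is essentially bookkeeping: block-multiplying $G^{-1}$ against $\del_i G - \bar{\mathcal T}^0 \cdot G$ and collecting terms, keeping careful track of the factors of $\sqrt{-1}$ produced by the off-diagonal entries of $G$ and by $\mathcal T^0$. The one step requiring genuine care is the $\Gamma^{W^b}_{iZ^a}$ block: there $\del_i \gb_{ab}$ appears alongside terms in which $\del_i g$ is contracted against $\gb$, and one must recognize that these combine precisely into the Chern-covariant derivative $\N^C_i \gb_{ab}$ of $\gb$ relative to $g$ (against the Chern Christoffel symbols $g^{\bc p} g_{a\bc, i}$ of $g$), after which the remaining torsion contributions reorganize into the stated $\bar T$-terms. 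Thus the main — and essentially only — obstacle is computational: the block-matrix algebra, the inversion of $G$, and the index bookkeeping needed to spot $\N^C\gb$; there is no conceptual difficulty. As a consistency check, the answer must agree with Proposition \ref{p:GQ0}, i.e.\ transporting the Christoffel symbols of $\N^- = \N + \tfrac12 g^{-1} d^c \gw$ for $g$ (read off from Definition \ref{d:Chernconn}) first by $\psi_g$ and then by $e^{\sqrt{-1}\gb}$ should return exactly the four formulas claimed.
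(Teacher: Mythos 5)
Your proposal is correct and follows essentially the same route as the paper: characterize $\N^C_G$ by $G$-unitarity together with the requirement that its $(0,1)$-part be the twisted Dolbeault operator $\delb^0 + \mathcal{T}^0$ of $\QQ_{\i\del\gw_0}$, derive the general formula $\Gamma_{iA}^B = G^{\bar{P}B}\bigl(G_{A\bar{P},i} - (\bar{\mathcal{T}}^0)_{i\bar{P}}^{\bar{Q}}G_{A\bar{Q}}\bigr)$, and then substitute the explicit block form of $G$. The extra observations you add (inverting $G$ via the $B$-field transform, flagging the recombination into $\N^C_i\gb_{ab}$, and the consistency check against Proposition \ref{p:GQ0}) are sound refinements of the same computation.
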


\begin{prop} \label{p:GChernCurvature}
Let $(M,g_0,J)$ be a pluriclosed manifold. Consider the associated orthogonal holomorphic vector bundle $\QQ = \mathcal Q_{\i \del \gw_0}$ as in Definition \ref{def:Q0}, and a generalized Hermitian metric $G = G(\gw, \gb)$ as in Proposition \ref{p:GQ0}.  Then the Chern curvature $\Omega^C_G$ of the Hermitian metric $G$ has coefficients
\begin{align*}
e^{-\i \gb} (\Omega^C_G)_{i\bj}(e^{ \sqrt{-1}\gb} \del_k) =&\ (\Omega^B)_{k\bm i\bj}g^{\bm l}\del_l - (\Omega^B)_{kli\bj}dz^l,\\
e^{-\i \gb} (\Omega^C_G)_{i\bj}(e^{ \sqrt{-1}\gb} dz^k) =&\ g^{\bc l}g^{\bq k}\Omega^B_{\bq\bc\bj i}\del_l - g^{\bq k}(\Omega^B)_{l\bq i\bj}dz^l.
\end{align*}

\begin{proof}
Fix $p \in M$. Since the computation is local without loss of generality we can conjugate by a locally defined holomorphic $(2,0)$ form to assume that $\beta(p) = 0$.  We begin by computing the curvature operator $\Omega^G := \Omega^C_G$ acting on sections of the tangent bundle. The following computations make use of Lemma \ref{l:GChernCxn} which gives the connection coefficients of $\N^G : = \N^C_G$ in holomorphic coordinates:
\begin{align*}
(\Omega^G)_{i\bj}\del_k =&\ \N^G_i\N^G_{\bj}\del_k - \N^G_{\bj}\N^G_{i}\del_k\\
=&\ -\N^G_i(T^0_{kl\bj}dz^l) - \N^G_{\bj}(\Gamma_{iZ^k}^{Z^l}\del_l + \Gamma_{iZ^k}^{W^l}dz^l)\\
=&\ -T^0_{kl\bj,i}dz^l - T^0_{kp\bj}\Gamma_{iW^p}^{Z^l}\del_l - T^0_{kp\bj}\Gamma_{iW^p}^{W^l}dz^l - \Gamma_{iZ^k,\bj}^{Z^l}\del_l + \Gamma_{iZ^k}^{Z^p}T^0_{pl\bj }dz^l-\Gamma_{iZ^k,\bj}^{W^l}dz^l\\
=&\ [-T^0_{kp\bj}\Gamma_{iW^p}^{Z^l} - \Gamma_{iZ^k,\bj}^{Z^l} ] \del_l + [-T^0_{kl\bj,i} - T^0_{kp\bj}\Gamma_{iW^p}^{W^l} + \Gamma_{iZ^k}^{Z^p}T^0_{pl\bj } - \Gamma_{iZ^k,\bj}^{W^l}] dz^l\\
=&\ [(\Omega^C)_{i\bj k}^l - \sqrt{-1} \gb_{kr,\bj}g^{\bs r}g^{\bp l}\bar{T}_{\bp \bs i} - g^{\bs l}g^{\br p}\bar{T}_{\bs \br i}T^0_{kp\bj}]\del_l - [\N_i^C T_{kl\bj}]dz^l\\
=&\ [(\Omega^C)_{i\bj k}^l - g^{\bs r}g^{\bp l}\bar{T}_{\bp \bs i}(\sqrt{-1} \gb_{kr,\bj} + T^0_{kr\bj})]\del_l - [\N_i^C T_{kl\bj}]dz^l\\
=&\ [(\Omega^C)_{i\bj k}^l - g^{\bs r}g^{\bp l}\bar{T}_{\bp \bs i}T_{kr\bj}]\del_l - [\N_i^C T_{kl\bj}]dz^l\\
=&\ (\Omega^B)_{k\bm i\bj}g^{\bm l}\del_l - (\Omega^B)_{kli\bj}dz^l,
\end{align*}
where the last line follows from Propositions \ref{p:Bismut11PC} and \ref{p:Bismut20PC}.  We proceed similarly for the curvature operator acting on sections of the cotangent bundle.
\begin{align*}
(\Omega^G)_{i\bj}dz^k =&\ -\N^G_{\bj} (\Gamma_{iW^k}^{Z^l}\del_l + \Gamma_{iW^k}^{W^l}dz^l)\\
=&\ -\Gamma_{i W^k,\bj}^{Z^l}\del_l - \Gamma_{iW^k}^{Z^l}\N^G_{\bj}\del_l - \Gamma_{iW^k,\bj}^{W^l}dz^l\\
=&\ [-\Gamma_{i W^k,\bj}^{Z^l}]\del_l +[\Gamma_{iW^k}^{Z^p}T^0_{pl\bj} - \Gamma_{iW^k,\bj}^{W^l}]dz^l\\
=&\ [g^{\bc l}g^{\bq k}\N_{\bj}^{C}\bar{T}_{\bq\bc i}]\del_l + [- g^{\bc p}g^{\bq k}\bar{T}_{\bq \bc i}T^0_{pl\bj} - (\Omega^{C})_{i\bj l}^k + \sqrt{-1}g^{\bq k}g^{\bc p}\delb \gb_{l p\bj}\bar{T}_{\bq \bc i}]dz^l\\
=&\ [g^{\bc l}g^{\bq k}\N_{\bj}^{C}\bar{T}_{\bq\bc i}]\del_l + [-(\Omega^{C})_{i\bj l}^k + g^{\bq k}g^{\bc p}\bar{T}_{\bc\bq i} (\sqrt{-1}\delb\gb_{lp\bj}+T^0_{lp\bj})]dz^l\\
=&\ [g^{\bc l}g^{\bq k}\N_{\bj}^{C}\bar{T}_{\bq\bc i}]\del_l -[ (\Omega^{C})_{i\bj l}^k - g^{\bq k}g^{\bc p}\bar{T}_{\bc\bq i} T_{pl\bj}]dz^l\\
=&\ g^{\bc l}g^{\bq k}\Omega^B_{\bq\bc\bj i}\del_l - g^{\bq k}(\Omega^B)_{l\bq i\bj}dz^l,
\end{align*}
where again we applied the results of Propositions \ref{p:Bismut11PC} and \ref{p:Bismut20PC}.
\end{proof}
\end{prop}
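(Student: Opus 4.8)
The plan is to compute $\Omega^C_G$ directly from the connection coefficients of $\N^C_G$ recorded in Lemma \ref{l:GChernCxn}, and then to recognize the resulting combinations of Chern curvature and torsion as Bismut curvature by invoking Propositions \ref{p:Bismut11PC} and \ref{p:Bismut20PC}. Since the asserted identity is tensorial and the computation is local, I would first fix $p \in M$ and conjugate $\QQ = \mathcal{Q}_{\i\del\gw_0}$ by a locally defined holomorphic $(2,0)$-form so that $\gb(p) = 0$; this is precisely what the factors $e^{\pm\i\gb}$ in the statement encode. At $p$ this kills all undifferentiated $\gb$-terms in the coefficients $\Gamma^{Z^b}_{iZ^a}, \Gamma^{W^b}_{iZ^a}, \Gamma^{Z^b}_{iW^a}, \Gamma^{W^b}_{iW^a}$ of Lemma \ref{l:GChernCxn}, while the $\delb\gb$ terms produced upon differentiation are absorbed using $\delb\gb = \del\gw_0 - \del\gw$ from \eqref{eq:GHermitianexpweak2}, which converts the twist $T^0$ of $g_0$ appearing in the Dolbeault operator of $\QQ$ into the Chern torsion $T$ of $g$.

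With this reduction I would compute $(\Omega^C_G)_{i\bj} = \N^G_i \N^G_{\bj} - \N^G_{\bj} \N^G_i$ on the local frame $\del_k$, $dz^k$ of $T^{1,0} \oplus T^*_{1,0}$, using $[\del_i, \delb_j] = 0$. The $(0,1)$-part $\N^G_{\bj}$ is the twisted $\delb$-operator of Definition \ref{def:Q0}, so $\N^G_{\bj}\del_k$ carries a nonzero $T^*_{1,0}$-component, while $\N^G_i$ is governed by the four blocks above. Carrying out the differentiations and separating the $\del_l$- and $dz^l$-components, the $\del_k$ computation produces a $\del_l$-component of the shape $(\Omega^C)_{i\bj k}^l - g^{\bs r}g^{\bp l}\bar{T}_{\bp\bs i}T_{kr\bj}$ and a $dz^l$-component equal to $-\N^C_i T_{kl\bj}$: the former is Proposition \ref{p:Bismut11PC} with one index raised, hence $(\Omega^B)_{k\bm i\bj}g^{\bm l}$, and the latter is $(\Omega^B)_{kli\bj}$ by Proposition \ref{p:Bismut20PC}, giving the first displayed identity. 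The analogous manipulation for $dz^k$, again via Propositions \ref{p:Bismut11PC} and \ref{p:Bismut20PC}, yields the second.

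As a conceptual cross-check one can avoid coordinates entirely: Proposition \ref{p:GQ0} identifies $\N^C_G$ with $\N^-$ through $\psi_g$ (up to the $e^{\i\gb}$ gauge), so $(e^{\i\gb})_*\Omega^C_G = (\psi_g)_* R^-$, whereupon the block form of $\psi_g$ in \eqref{eq:Bismutisom} together with the curvature flip $R^-(Z,W,X,Y) = R^B(X,Y,Z,W)$ of Proposition \ref{p:curvatureflip} reorganizes $R^-$ into exactly the stated expressions in $\Omega^B$ — indeed Propositions \ref{p:Bismut11PC} and \ref{p:Bismut20PC} are essentially just the $\Lambda^{1,1}\otimes\Lambda^{1,1}$ and $\Lambda^{2,0}\otimes\Lambda^{1,1}$ shadows of this flip. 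I therefore expect the only real obstacle to be the mechanical bookkeeping in the explicit route; the genuine content, namely that every error term appearing is precisely one of the two curvature-reorganizing identities, has already been isolated in Propositions \ref{p:Bismut11PC} and \ref{p:Bismut20PC}, so once the computation is organized around them the proposition follows.
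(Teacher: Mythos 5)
Your proposal is correct and follows essentially the same route as the paper: a local computation at a point where $\gb$ is gauged to vanish, direct evaluation of $(\Omega^C_G)_{i\bj}$ on the frame $\del_k$, $dz^k$ using the connection coefficients of Lemma \ref{l:GChernCxn}, absorption of the $\delb\gb$ terms via $\delb\gb = \del\gw_0 - \del\gw$ to convert $T^0$ into $T$, and identification of the resulting blocks through Propositions \ref{p:Bismut11PC} and \ref{p:Bismut20PC}. The coordinate-free cross-check via Proposition \ref{p:GQ0} and the curvature flip is a sensible sanity check but is not needed; the paper carries out exactly the explicit computation you describe.
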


\section{Slope stability criteria for Bismut Hermitian-Einstein metrics} \label{s:slopestability}

In this section we discuss necessary conditions for the existence of a Bismut Hermitian-Einstein metric
in relation to slope stability for the associated holomorphic Courant algebroid. First in \S \ref{ss:HYM} we recall fundamental points of Hermitian-Einstein metrics on holomorphic vector bundles.  Then in \S \ref{ss:BHE} we explain how Bismut Hermitian-Einstein metrics are in fact Hermitian-Einstein metrics on the relevant holomorphic Courant algebroid, leading to slope stability obstructions to existence.  Finally in \S \ref{ss:BHEobs} we derive concrete obstructions in terms of holomorphic maps from the underlying complex manifold, yielding explicit families in all dimensions of manifolds with vanishing first Chern class but no Bismut Hermitian-Einstein metric.

\subsection{Hermitian-Einstein metrics and slope stability} \label{ss:HYM}

\begin{defn} \label{d:HEdef} Let $(M^{2n}, g, J)$ be a Hermitian manifold and suppose $(\EE, h) \to M$ denote a holomorphic vector bundle with Hermitian metric $h$ and associated Chern connection $\N^h$. The \emph{second Ricci curvature} is
\begin{align*}
S^h_g := \sqrt{-1}\tr_{\gw} \Omega^h \in \End(E).
\end{align*}
\end{defn} 

The existence of metrics with vanishing second Ricci curvature curvature, or more generally Hermitian-Einstein metrics, is goverened by slope stability criteria as in the Donaldson-Uhlenbeck-Yau Theorem \cite{DonaldsonHYM, UYau} and its extensions to Hermitian manifolds (see \cite{BuchdahlHK,LiYauHYM,lt}).  To state the precise result which we will use, let us recall first some basic definitions. Given a coherent sheaf $\mathcal{F}$ of $\mathcal{O}_M$-modules over $M$, the determinant $\det \mathcal F : = ((\Lambda^r \mathcal F)^*)^*$, where $r$ denotes the rank of $\mathcal{F}$, is a holomorphic line bundle over $M$. Given now an Aeppli class $a \in H_A^{n-1,n-1}$, we can define the slope of $\mathcal{F}$ by
$$
\mu_{a}(\mathcal{F}) = \frac{c_1(\det \mathcal{F})\cdot a}{r}.
$$ 
where $c_1(\det \mathcal{F}) \in H^{1,1}_{BC}(M)$ is the first Chern class of $\det \mathcal{F}$, regarded an element in the Bott-Chern cohomology of $M$. Here we use the standard duality pairing
$$
H^{1,1}_{BC} \otimes H_A^{n-1,n-1} \to \mathbb{C}.
$$

\begin{defn} \label{d:stab} 
Let $(M,J)$ be a compact complex manifold endowed with an Aeppli class $a \in H_A^{n-1,n-1}$. A holomorphic vector bundle $\EE$ over $M$ is \emph{$a$-stable} if for any subsheaf $\mathcal{F} \subset \mathcal{\EE}$ one has
$$
\mu_{a}(\mathcal{F}) < \mu_{a}(\mathcal{Q}).
$$
We say that $\mathcal{E}$ is \emph{$a$-polystable} if it is a direct sum of $a$-stable bundles with the same slope. 
\end{defn}

The relation between slope stability and the Hermitian-Einstein equation is provided by the following important result (cf \cite{lt}):

\begin{thm}\label{t:DUY} Let $(M,J)$ be a compact complex manifold. Let $\tilde \omega$ be a Gauduchon Hermitian metric on $M$ with Aeppli class $a = [\tilde \omega^{n-1}] \in H_A^{n-1,n-1}$. A holomorphic vector bundle $\mathcal{\EE}$ over $(M,J)$ admits a Hermitian metric $h$ solving the Hermitian-Einstein equation
$$
S^h_g = 2\pi \mu_{a}(\mathcal{E}) \Id_\mathcal{E}
$$
if and only if $\mathcal{\EE}$ is $a$-polystable.
\end{thm}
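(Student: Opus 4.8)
This is the Donaldson--Uhlenbeck--Yau theorem in the Gauduchon setting, and the plan is to follow the classical arguments of \cite{UYau} in the Hermitian generalizations of \cite{BuchdahlHK, LiYauHYM, lt}, organized around the two implications. Throughout write $\mu := \mu_a(\mathcal E)$ and fix a reference Hermitian metric $h_0$ on $\mathcal E$; for the present paper the technical inputs below may ultimately just be quoted from those references, but it is worth recording the structure of the proof.

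\textbf{Necessity.} Suppose $h$ solves $S^h_g = 2\pi \mu \Id_{\mathcal E}$, and let $\mathcal F \subset \mathcal E$ be a coherent subsheaf; passing to the saturation only increases the slope, so we may assume $\mathcal F$ is saturated and hence a holomorphic subbundle off an analytic subset $Z$ of codimension at least two. Equip $\mathcal F|_{M \setminus Z}$ with the induced metric, with associated second fundamental form $\gamma$; the Gauss--Codazzi identity gives $\sqrt{-1}\tr_{\gw} \tr \Omega^{\mathcal F} = \tr(S^h_g|_{\mathcal F}) - \brs{\gamma}^2$. Integrating against $\til{\gw}^{n-1}$ and using the Gauduchon condition $\del \delb \til{\gw}^{n-1} = 0$ — which is exactly what makes the pairing $H^{1,1}_{BC} \otimes H^{n-1,n-1}_A \to \mathbb C$, hence $\deg_a$ and $\mu_a$, well defined and what licenses the integration by parts, together with a standard estimate controlling the contribution near $Z$ — one obtains $\mu_a(\mathcal F) \leq \mu$, with equality only if $\gamma \equiv 0$. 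In the equality case $\mathcal F$ splits $\mathcal E = \mathcal F \oplus \mathcal F^{\perp}$ orthogonally and holomorphically, both summands being Hermitian--Einstein of slope $\mu$; inducting on the rank shows $\mathcal E$ is $a$-polystable.

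\textbf{Sufficiency.} Conversely, assume $\mathcal E$ is $a$-polystable. Writing $\mathcal E = \bigoplus_i \mathcal E_i$ with each $\mathcal E_i$ stable of slope $\mu$, it suffices to treat the case $\mathcal E$ stable. To produce a solution I would use the continuity method: for $\epsilon \in (0,1]$ solve the perturbed equation
\begin{equation*}
S^{h_\epsilon}_g + \epsilon \log\left( h_0^{-1} h_\epsilon \right) = 2\pi \mu \Id_{\mathcal E},
\end{equation*}
which admits a solution for each such $\epsilon$ (this is the elementary, openness-plus-a-priori-estimates part of the argument; alternatively one runs the Donaldson heat flow $h^{-1} \dot h = - (S^h_g - 2\pi \mu \Id)$, which exists for all time, and studies its limit). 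One then lets $\epsilon \to 0$ and invokes the dichotomy: either $\sup_M \brs{\log(h_0^{-1} h_\epsilon)}$ stays bounded, in which case the standard elliptic bootstrap — a second-order estimate followed by Schauder theory — gives subconvergence to a smooth Hermitian--Einstein metric; or this quantity blows up, and after rescaling the endomorphisms $\log(h_0^{-1} h_\epsilon)$ converge weakly in $L^2_1$ to a nonzero self-adjoint limit whose eigenprojections are weakly holomorphic orthogonal projections on $\mathcal E$.

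\textbf{The main obstacle.} The heart of the argument, and the step I expect to be hardest, is the Uhlenbeck--Yau regularity theorem: a nontrivial $L^2_1$ weakly holomorphic orthogonal projection on $\mathcal E$ is the projection onto a coherent analytic subsheaf $\mathcal F \subsetneq \mathcal E$. Granting this, a Chern--Weil inequality along the continuity path — once more using that $\til{\gw}$ is Gauduchon, so that $\deg_a(\mathcal F)$ is defined and the relevant terms integrate by parts — forces $\mu_a(\mathcal F) \geq \mu$, contradicting stability. Hence the blow-up alternative cannot occur, the uniform $C^0$ bound holds, and the Hermitian--Einstein metric exists. The genuinely non-formal ingredients (long-time existence and $C^0$ control for the perturbed/parabolic problem on a Hermitian manifold, and the weak-subbundle regularity theorem) are precisely those established in \cite{BuchdahlHK, LiYauHYM, lt}, which we invoke directly.
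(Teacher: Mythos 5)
Your proposal is correct, and it coincides with the paper's treatment: the paper does not prove this theorem but simply quotes it from \cite{lt} (cf.\ also \cite{BuchdahlHK,LiYauHYM}), and your sketch is an accurate outline of the argument in those references --- the Chern--Weil/Gauss--Codazzi computation for necessity, the continuity method with the perturbation $\epsilon \log(h_0^{-1}h_\epsilon)$ for sufficiency, the Uhlenbeck--Yau weak-subbundle regularity theorem as the key non-formal input, and the correct identification of where the Gauduchon condition $\del\delb\til{\gw}^{n-1}=0$ enters. Nothing further is needed beyond the citation the paper already gives.
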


\subsection{Bismut Hermitian-Einstein metrics and slope stability of \texorpdfstring{$\QQ$}{Q}} \label{ss:BHE}

The theory of Hermitian-Einstein metrics implies obstructions to the existence of Bismut Hermitian-Einstein metrics.  The fundamental point in our discussion is that the Hermitian Yang-Mills curvature of a generalized metric $G$ on $\QQ_{\i\del \gw}$ is completely determined by the Bismut Ricci curvature of the associated Hermitian metric $g$ on $M$.  We make this concrete below.

\begin{prop} \label{p:SecondGChernCurvature} Let $(M,g_0,J)$ be a pluriclosed manifold. Consider the associated orthogonal holomorphic vector bundle $\QQ = \mathcal Q_{\i \del \gw_0}$ as in Definition \ref{def:Q0}.  Consider a generalized Hermitian metric $G = G(\gw, \gb)$ as in Proposition \ref{p:GQ0}.  Then the Chern connection $\N^C_G$ of the Hermitian metric $G$, defined as in \eqref{eq:Gbeta} satisfies
\begin{align*}
	S^G_g =&\ \sqrt{-1}(e^{\sqrt{-1}\beta})^*\left( 
	\begin{matrix}
	 - g^{-1} \rho_B^{1,1} & g^{-1} \rho_B^{0,2} g^{-1}\\
	 \rho_B^{2,0} & - \rho_B^{1,1} g^{-1}
	\end{matrix} \right).
	\end{align*}
Consequently, $g$ is Bismut Hermitian-Einstein if and only if $G$ is Hermitian-Einstein with respect to $g$, that is, if and only if 
$$
S^G_g = 0.
$$	
	\begin{proof}
		This follows by taking the trace of the formula for $\Omega^C_G$ in Proposition \ref{p:GChernCurvature}.  Alternatively, this follows from  Proposition \ref{p:Bismutid}, since
\begin{align*}
S^G_g & = \sqrt{-1} \tr_\omega \Omega_G^C\\ 
& = \sqrt{-1}(e^{\sqrt{-1}\beta})^* \psi_* \tr_\omega R^-\\
& = - \sqrt{-1}(e^{\sqrt{-1}\beta})^* \psi_* g^{-1} \rho_B,
\end{align*}
where for the last identity we have used Proposition \ref{p:Bismutpair} to conclude
$$
(\tr_\omega R^-) X = \tfrac{1}{2} \sum_{j=1}^{2n}R^-(e_i,Je_i)X = - \tfrac{1}{2} \sum_{j=1}^{2n} g^{-1}R^B(X,\cdot,Je_i,e_i) = - g^{-1}\rho_B(X).
$$
Finally, given $X + \xi \in T^{1,0} \oplus T_{1,0}^*$, we calculate
\begin{align*}
(\psi_* g^{-1} \rho_B)(X) & = \psi(g^{-1}\rho_B(X)) = g^{-1}\rho_B^{1,1}(X) - \rho_B^{2,0}(X)\\
(\psi_* g^{-1} \rho_B)(\xi) & = - \psi(g^{-1}\rho_B g^{-1} \xi) = \rho_B^{1,1}g^{-1} \xi - g^{-1}\rho_B^{0,2} g^{-1} \xi.
\end{align*}
\end{proof}
\end{prop}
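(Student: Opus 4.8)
The plan is to derive the formula for $S^G_g$ by taking the trace of the Chern curvature formula already established in Proposition \ref{p:GChernCurvature}, and then to read off the Hermitian-Einstein characterization. Since the conjugation by $e^{\sqrt{-1}\beta}$ is an isometry of holomorphic orthogonal bundles and intertwines $\Omega^C_G$ with $(\psi_g)_* R^-$ (Proposition \ref{p:GQ0}), it suffices to compute $\sqrt{-1}\tr_\omega$ of the block expressions for $e^{-\sqrt{-1}\beta}(\Omega^C_G)_{i\bj}(e^{\sqrt{-1}\beta}\,\cdot\,)$ acting on $\del_k$ and on $dz^k$. Contracting the $(i,\bj)$ indices against $g^{i\bj}$ in those formulas produces exactly the four blocks $-g^{-1}\rho_B^{1,1}$, $g^{-1}\rho_B^{0,2}g^{-1}$, $\rho_B^{2,0}$, $-\rho_B^{1,1}g^{-1}$: one uses that $\rho_B(X,Y) = \tfrac{1}{2}\IP{R^B(X,Y)Je_i,e_i}$ so that the $\Lambda^{1,1}$-trace of $(\Omega^B)_{i\bj}$ against the metric yields $\rho_B$, splitting into its $(1,1)$, $(2,0)$ and $(0,2)$ parts according to which index slots survive.

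First I would recall the more conceptual route offered by Proposition \ref{p:Bismutid}/\ref{p:GQ0}: since $(e^{\sqrt{-1}\beta})_*\N^C_G = (\psi_g)_*\N^-$, we get $(e^{\sqrt{-1}\beta})_*\Omega^C_G = (\psi_g)_* R^-$, and hence $S^G_g = \sqrt{-1}(e^{\sqrt{-1}\beta})^*\psi_*(\tr_\omega R^-)$. The key computational input is the identity $(\tr_\omega R^-)X = -g^{-1}\rho_B(X)$, which follows from Proposition \ref{p:Bismutpair} (the curvature flip $R^B(X,Y,Z,W) = R^-(Z,W,X,Y)$): tracing $R^-$ over the $J$-twisted pair of tangent directions converts, after the flip, into the Ricci-type contraction defining $\rho_B$, with the metric raising one index. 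Then one just needs to transport $g^{-1}\rho_B$, viewed as an endomorphism of $TM\otimes\mathbb{C}$, through the Bismut isomorphism $\psi_g$ of Lemma \ref{l:Psicxisom}, using $\psi_g^{-1}(X+\xi) = X - g^{-1}\xi$; this is what produces the type-decomposition of $\rho_B$ into $(1,1)$, $(2,0)$, $(0,2)$ pieces appearing in the stated matrix.

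Once the formula is in hand, the final equivalence is immediate: since $e^{\sqrt{-1}\beta}$ and $\psi_g$ are isomorphisms, $S^G_g = 0$ if and only if $\tr_\omega R^- = 0$ if and only if $\rho_B = 0$, i.e.\ $g$ is Bismut Hermitian-Einstein. Concretely from the matrix, vanishing of the off-diagonal blocks forces $\rho_B^{2,0} = \rho_B^{0,2} = 0$ and vanishing of the diagonal blocks forces $\rho_B^{1,1} = 0$, so $\rho_B \equiv 0$; the converse is clear.

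The main obstacle is purely bookkeeping: matching index conventions between the $(4,0)$-curvature components $(\Omega^B)_{ijk\bl}$, $(\Omega^B)_{i\bj k\bl}$ appearing in Proposition \ref{p:GChernCurvature} and the intrinsic definition of $\rho_B$ in Definition \ref{d:Riccidef}, and correctly tracking the factors of $\sqrt{-1}$ and the normalization of $\del\omega$ versus $2\sqrt{-1}\del\omega$ used in the two presentations of $\QQ$. In particular one must be careful that the trace $\tr_\omega$ (a contraction with $g^{i\bj}$, equivalently $\sqrt{-1}\gw^{-1}$ paired against the $(1,1)$-form part) lands on the correct two slots of each four-index block so that, e.g., $g^{i\bj}(\Omega^B)_{kli\bj}$ becomes the $(2,0)$-component $\rho_B^{2,0}(\del_k,\del_l)$ rather than a mismatched contraction. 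Choosing normal coordinates for $g$ at the point (so that first derivatives of $g$ vanish) streamlines this and is the cleanest way to verify the signs.
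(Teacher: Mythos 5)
Your proposal is correct and follows essentially the same route as the paper, which likewise offers both arguments: taking $\sqrt{-1}\tr_\omega$ of the block formula in Proposition \ref{p:GChernCurvature}, and, alternatively, combining Proposition \ref{p:Bismutid} with the curvature flip of Proposition \ref{p:Bismutpair} to get $(\tr_\omega R^-)X = -g^{-1}\rho_B(X)$ and then transporting $g^{-1}\rho_B$ through $\psi_g$ to produce the type decomposition. The concluding equivalence is handled the same way in both.
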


As a straightforward consequence of Theorem \ref{t:DUY} and Proposition \ref{p:SecondGChernCurvature} we obtain the following necessary condition to the existence of a pluriclosed Bismut Hermitian-Einstein metric.

\begin{cor} \label{cor:stab} 
Let $(M,g,J)$ be a pluriclosed Hermitian manifold. Denote by $a = [\tilde \omega^{n-1}] \in H_A^{n-1,n-1}$ the Aeppli class of the unique Gauduchon metric $\tilde{\omega}$ in the conformal class of $\omega$, such that $\int_X \tilde\omega^n = \int_X \omega^n$. Consider the associated orthogonal holomorphic vector bundle $\mathcal{Q} = \mathcal{Q}_{\sqrt{-1}\partial \omega}$ as in Definition \ref{def:Q0}. Assume that the metric $g$ is Bismut Hermitian-Einstein. Then, for any subsheaf $\mathcal{F} \subset \mathcal{Q}$ one has
\begin{equation}\label{eq:slopeineq}
\mu_{a}(\mathcal{F}) \leq 0,
\end{equation}
with equality only if $\mathcal{Q}$ splits holomorphically. 

\begin{proof}
The proof is an easy consequence of Proposition \ref{p:SecondGChernCurvature} and Theorem \ref{t:DUY}, after noting that $\mathcal{Q}$ satisfies $c_1(\mathcal{Q}) = 0 \in H^{1,1}_{BC}(M)$, since $\det \mathcal{Q}$ admits a canonical holomorphic trivialization induced by the holomorphic pairing.
\end{proof}
\end{cor}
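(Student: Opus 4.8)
The plan is to deduce Corollary \ref{cor:stab} directly from Proposition \ref{p:SecondGChernCurvature} together with the Donaldson--Uhlenbeck--Yau type result Theorem \ref{t:DUY}. First I would observe that a pluriclosed metric $g$ with $\rho_B \equiv 0$ induces, via Proposition \ref{p:GQ0} (with $\gb = 0$ and background $\gw_0 = \gw$), a generalized Hermitian metric $G = G'$ on the holomorphic orthogonal bundle $\QQ = \QQ_{\sqrt{-1}\del\gw}$. By Proposition \ref{p:SecondGChernCurvature}, the vanishing $\rho_B \equiv 0$ is equivalent to $S^G_g = 0$, i.e. $\sqrt{-1}\tr_\gw \Omega^C_G = 0$. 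So $G$ solves the Hermitian--Einstein equation on $\QQ$ with respect to the Gauduchon metric $\tilde\gw$ in the conformal class of $\gw$, provided the Einstein constant is zero.

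The next step is to pin down that the Einstein constant is indeed $2\pi\mu_a(\QQ) = 0$. For this I would use the fact that $\QQ$ is an extension of $T^{1,0}$ by $T^*_{1,0}$, so $\det\QQ \cong \det T^{1,0} \otimes \det T^*_{1,0}$ is canonically holomorphically trivial; concretely, the holomorphic pairing $\IP{,}$ gives an isomorphism $\QQ \cong \QQ^*$, forcing $\det\QQ \cong (\det\QQ)^*$, hence $c_1(\det\QQ) = 0 \in H^{1,1}_{BC}(M)$ and therefore $\mu_a(\QQ) = 0$. Thus $G$ is an honest Hermitian--Einstein metric on $\QQ$ with vanishing Einstein constant, and Theorem \ref{t:DUY} applies to give that $\QQ$ is $a$-polystable.

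Now I would translate polystability into the asserted slope inequality. Since $\QQ$ is $a$-polystable, it is a direct sum of $a$-stable bundles each of slope $\mu_a(\QQ) = 0$. For any subsheaf $\mathcal F \subset \QQ$, the standard argument (intersecting $\mathcal F$ with the stable summands, or directly using that $a$-polystability implies $a$-semistability) shows $\mu_a(\mathcal F) \leq \mu_a(\QQ) = 0$, which is \eqref{eq:slopeineq}. For the equality clause, I would note that $\mu_a(\mathcal F) = 0 = \mu_a(\QQ)$ with $\mathcal F$ a proper subsheaf cannot happen when $\QQ$ is $a$-stable; so equality forces $\QQ$ to be strictly $a$-polystable, i.e. it splits as a nontrivial direct sum of bundles of slope $0$ — in particular $\QQ$ splits holomorphically.

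The only genuinely delicate point is the hypothesis-matching in applying Theorem \ref{t:DUY}: one must choose the Gauduchon representative $\tilde\gw$ (with the normalization $\int_X\tilde\gw^n = \int_X\gw^n$, which only fixes the scale and is harmless) and check that $\tr_\gw \Omega^C_G = 0$ is equivalent to the Gauduchon-metric Hermitian--Einstein condition $\tr_{\tilde\gw}\Omega^C_G = $ const, using that $\gw$ and $\tilde\gw$ are conformal so their traces of a $(1,1)$-form agree up to a positive function — combined with $S^G_g = 0$ this gives exactly the Hermitian--Einstein equation for $\tilde\gw$ with constant $0$. Everything else is formal bookkeeping with the determinant line bundle and the definition of $a$-(poly)stability.
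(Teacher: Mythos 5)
Your argument is correct and follows the same route as the paper: identify the Bismut Hermitian--Einstein condition with $S^G_g = 0$ via Proposition \ref{p:SecondGChernCurvature}, trivialize $\det\mathcal{Q}$ to get $\mu_a(\mathcal{Q})=0$, and invoke Theorem \ref{t:DUY} to obtain $a$-polystability, from which the slope inequality and the splitting in the equality case follow. You in fact supply details the paper leaves implicit (the conformal invariance of the trace-free condition when passing from $\omega$ to the Gauduchon representative $\tilde\omega$, and the extension argument trivializing $\det\mathcal{Q}$), so nothing is missing.
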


\begin{rmk}
The holomorphic Courant algebroid $\mathcal{Q}$ has a natural orthogonal structure. Thus, by general theory, in the previous result we can restrict to isotropic subsheaves, that is $\mathcal{F} \subset \QQ$ such that $\IP{\mathcal{F},\mathcal{F}} = 0$, in order to measure the slope inequality \eqref{eq:slopeineq} (see \cite{BiswasGomez}).
\end{rmk}

As observed in Remark \ref{rem:AeppliQiso}, the holomorphic structure on $\mathcal{Q}_{\sqrt{-1}\partial \omega}$  is determined up to isomorphism by the Aeppli class of the pluriclosed metric $[\omega] \in H^{1,1}_A$. In general, the stability condition depends in a intricate way on the Bismut Hermitian-Einstein pluriclosed metric. This is due to the fact that the map 
$$
\omega \mapsto a = [\tilde \omega^{n-1}] \in H_A^{n-1,n-1}(M)
$$
is typically a complicated function in the space of pluriclosed metrics. In the special case of complex surfaces $n=2$, this map only depends on the Aeppli class $[\omega] \in H_A^{1,1}$, and is just the identity map.

\subsection{Obstructions to Bismut Hermitian-Einstein metrics} \label{ss:BHEobs}

In the following we will see that we can obtain some more concrete implications of Corollary \ref{cor:stab} for the existence of  pluriclosed Bismut Hermitian-Einstein metrics. In particular, our next result provides a clean obstruction to the existence of such metrics on a compact complex manifold. For this, we exploit the fact that any exact holomorphic Courant algebroid has a canonical isotropic subsheaf, given by the holomorphic cotangent bundle
\begin{align*}
T^*_{1,0} \overset{\pi^*}{\longrightarrow} \mathcal{Q}.
\end{align*}
We will say that an Aeppli class $a \in H_A^{n-1,n-1}(M)$ is \emph{positive} if $a = [\tilde \omega^{n-1}]$, for some Gauduchon metric $\tilde \omega$ on $M$.

\begin{thm} \label{thm:obstruction} 

Let $(M,J)$ be a compact connected complex manifold. Assume that $M$ admits a pluriclosed Hermitian metric $g$ which is Bismut Hermitian-Einstein. Then, there exists a positive Aeppli class $a \in H_A^{n-1,n-1}(M)$ such that, for any complex manifold $Z$ and any holomorphic map $f \colon M \to Z$ such that $df$ is surjective at one point, one has
$$
f^*c_1(Z) \cdot a \geq 0
$$ 
Furthermore, for $Z = (M,J)$ and $f = \Id$, $c_1(M)\cdot a > 0$ unless $g$ is K\"ahler.

\begin{proof}
Let $g$ as in the statement and consider the holomorphic Courant algebroid $\mathcal{Q} := \mathcal{Q}_{\sqrt{-1}\partial \omega}$. Let $a = [\tilde \omega^{n-1}]$ be the Aeppli class of the associated (normalized) Gauduchon metric $\tilde \omega$. By Corollary \ref{cor:stab}, the holomorphic vector bundle underlying $\mathcal{Q}$ is $a$-polystable. Let $f \colon M \to Z$ be as in the statement. Then, the differential $df \colon T_{1,0} \to T^{1,0}Z$ induces a morphism
\begin{align*}
f^*T^*_{1,0}Z \longrightarrow T^*_{1,0} \overset{\pi^*}{\longrightarrow} \mathcal{Q}.
\end{align*}
Since $df$ is surjective at one point, by Sard's Theorem there exists a dense open subset 
$R \subset Z$ of regular values of $f$. Using that $M$ is connected and that $f$ is holomorphic, it follows that $f^{-1}(R) \subset M$ is also dense (in fact, the complement is an analytic subspace of codimension $\geq 1$). Hence, the previous morphism induces a subsheaf
$$
f^*T^*_{1,0}Z \hookrightarrow \mathcal{Q}.
$$
The inequality of slopes \eqref{eq:slopeineq} now gives 
$$
0 \geq c_1(f^*T^*Z)\cdot a = - f^* c_1(Z) \cdot a.
$$
As for the last part, if $c_1(M)\cdot a = 0$ then by Proposition \ref{p:SecondGChernCurvature} and Corollary \ref{cor:stab} we have a holomorphic splitting $\mathcal{Q} = T^{1,0} \oplus T^*_{1,0}$ and hence $\partial \omega = 0$.
\end{proof}
\end{thm}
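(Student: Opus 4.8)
The plan is to deduce the theorem from the slope obstruction of Corollary~\ref{cor:stab}, applied to the canonical isotropic subsheaf of the exact holomorphic Courant algebroid $\mathcal{Q} := \mathcal{Q}_{\sqrt{-1}\del\omega}$ attached to $g$, together with the obvious functoriality of that subsheaf under holomorphic maps. First I would set $a = [\tilde\omega^{n-1}] \in H_A^{n-1,n-1}(M)$ equal to the (by definition positive) Aeppli class of the normalized Gauduchon representative $\tilde\omega$ of the conformal class of $\omega$, exactly as in Corollary~\ref{cor:stab}. Since $a$ depends only on $g$, it serves uniformly for every map $f$. By Corollary~\ref{cor:stab}, the holomorphic bundle underlying $\mathcal{Q}$ is $a$-polystable, $\det\mathcal{Q}$ is canonically holomorphically trivial so $\mu_a(\mathcal{Q}) = 0$, and every subsheaf $\mathcal{F}\subset\mathcal{Q}$ satisfies $\mu_a(\mathcal{F}) \le 0$, with equality only if $\mathcal{Q}$ splits holomorphically.

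The crux is to realize $f^*T^*_{1,0}Z$ as a subsheaf of $\mathcal{Q}$. Exactness of $\mathcal{Q}$ gives the canonical isotropic inclusion $\pi^*\colon T^*_{1,0}\hookrightarrow\mathcal{Q}$; dualizing $df\colon T_{1,0}\to f^*T^{1,0}Z$ yields $f^*T^*_{1,0}Z\to T^*_{1,0}$, and composing, a morphism $\varphi\colon f^*T^*_{1,0}Z\to\mathcal{Q}$ with image in the isotropic subbundle $\pi^*T^*_{1,0}$. Over the nonempty open set $U\subset M$ where $df$ is surjective, $\varphi$ is fibrewise injective; by Sard's theorem the regular values of $f$ are dense in $Z$, and since $M$ is connected and $f$ holomorphic, $M\setminus U$ is a proper analytic subset, so $U$ is dense. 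Because $f^*T^*_{1,0}Z$ is locally free, hence torsion-free, a morphism out of it that is injective on a dense open set is injective; thus $\varphi$ exhibits $f^*T^*_{1,0}Z$ as a (necessarily isotropic) coherent subsheaf of $\mathcal{Q}$, isomorphic to it as a sheaf, so its determinant is $f^*K_Z$ with first Chern class $-f^*c_1(Z)$. Then \eqref{eq:slopeineq} with $\mu_a(\mathcal{Q}) = 0$ gives $-f^*c_1(Z)\cdot a = c_1(f^*T^*Z)\cdot a \le 0$, i.e. $f^*c_1(Z)\cdot a \ge 0$.

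For the strict statement, take $Z = (M,J)$ and $f = \Id$, so that $\varphi$ is precisely $\pi^*\colon T^*_{1,0}\hookrightarrow\mathcal{Q}$, of slope $\mu_a(T^*_{1,0}) = -\,c_1(M)\cdot a/n$; the inequality above then reads $c_1(M)\cdot a \ge 0$. If $c_1(M)\cdot a = 0$, then $\mu_a(T^*_{1,0}) = 0 = \mu_a(\mathcal{Q})$, and the equality clause of Corollary~\ref{cor:stab} forces $\mathcal{Q}$ to split holomorphically --- as an exact orthogonal Courant algebroid of the form $T^{1,0}\oplus T^*_{1,0}$, since $T^*_{1,0}$ is Lagrangian. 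By Theorem~\ref{t:holCourant} and Remark~\ref{rem:AeppliQiso} this means the defining class $[\sqrt{-1}\del\omega]\in H^1(\Lambda^{2,0}_{cl})$ vanishes; unwinding the twisted Dolbeault operator of Definition~\ref{def:Q0}, together with the Hermitian--Einstein identity of Proposition~\ref{p:SecondGChernCurvature} (whose diagonal blocks already vanish here), this upgrades to $\del\omega = 0$. Since $\omega$ is real, $\delb\omega = \overline{\del\omega} = 0$, hence $d\omega = 0$ and $g$ is K\"ahler.

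The main obstacle I expect is the sheaf-theoretic bookkeeping in the middle step: confirming that $\varphi$ has locally free source and is generically injective, hence injective with image an honest coherent subsheaf of $\mathcal{Q}$ whose first Chern class is computed through the sheaf isomorphism with $f^*T^*_{1,0}Z$, and correctly handling the degeneracy locus of $df$ via Sard together with the analyticity supplied by holomorphicity and connectedness of $M$. The second delicate point is the last step --- passing from a holomorphic splitting of $\mathcal{Q}$ (equivalently, vanishing of the class $[\sqrt{-1}\del\omega]$) to the pointwise identity $\del\omega = 0$ --- which requires keeping careful track of how the twisting class encodes the extension structure rather than only the underlying holomorphic bundle.
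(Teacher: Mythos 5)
Your proposal follows the paper's proof essentially step for step: the same Aeppli class $a=[\tilde\omega^{n-1}]$ from Corollary \ref{cor:stab}, the same morphism $f^*T^*_{1,0}Z\to T^*_{1,0}\overset{\pi^*}{\to}\mathcal{Q}$ made into a subsheaf via Sard's theorem and connectedness, the same slope inequality, and the same polystability/splitting argument for the strict case. The only place you add anything is in flagging (correctly) that the final passage from a holomorphic splitting of $\mathcal{Q}$ to the pointwise identity $\partial\omega=0$ needs the splitting to be the literal one $T^{1,0}\oplus T^*_{1,0}$ (e.g.\ as the $G$-orthogonal complement of the parallel subbundle $T^*_{1,0}$) rather than merely the vanishing of the extension class, a point the paper's own proof also leaves implicit.
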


\begin{rmk}
An alternative proof of the last part of Theorem \ref{thm:obstruction} can be obtained as a consequence of a result of Gauduchon \cite{Gauduchonfibres}. For instance, $\rho_B = 0$ implies that the Chern scalar is non-negative and bigger than zero at some point if $g$ is non-K\"ahler (see Proposition \ref{p:BismutRicci}). From this, it follows that $c_1(M) \cdot a \geq 0$ with equality only if $g$ is K\"ahler (see  \cite[Proposition 8.30]{garciafernStreets2020}).
\end{rmk}

We next obtain a more concrete criteria derived from Theorem \ref{thm:obstruction}.

\begin{cor} \label{cor:obstructionfib}

Let $f \colon (M,J) \to Z$ be a holomorphic map of compact connected complex manifolds. Assume that $df$ is surjective at some point and that $Z$ is K\"ahler with $c_1(Z) < 0$. Then $M$ does not admit a Bismut Hermitian-Einstein pluriclosed metric.

\begin{proof}
By Aubin-Yau's Theorem there exists a K\"ahler-Einstein metric $\omega_Z$ on $Z$ with negative scalar curvature, that is, such that $\rho_Z = - \omega_Z$. Let $a = [\tilde \omega^{n-1}] \in H_A^{n-1,n-1}(M)$ be a positive Aeppli class on $M$. Then
$$
f^*c_1(Z) \cdot a = - \int_X f^* \omega_Z \wedge \tilde \omega^{n-1}.
$$ 
By hypothesis there exists $x \in M$ such that $f$ is a submersion, and hence, arguing as in the proof of Theorem \ref{thm:obstruction}, the preimage of the set of regular values is open and dense. On this locus $f^* \omega_Z \wedge \tilde \omega^{n-1} > 0$, and hence $f^*c_1(Z) \cdot a < 0$.
\end{proof}
\end{cor}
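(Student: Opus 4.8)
The plan is to derive a contradiction from the slope-stability obstruction of Theorem~\ref{thm:obstruction}. Suppose $M$ carried a pluriclosed Bismut Hermitian-Einstein metric $g$. Then Theorem~\ref{thm:obstruction} would supply a positive Aeppli class $a = [\tilde\omega^{n-1}] \in H^{n-1,n-1}_A(M)$, with $\tilde\omega$ a Gauduchon metric, for which $f^* c_1(Z) \cdot a \geq 0$. So it suffices to show that, under the hypotheses on $f$ and $Z$, \emph{every} positive Aeppli class $a$ on $M$ satisfies $f^* c_1(Z)\cdot a < 0$, which contradicts this.

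First I would fix a good representative of $c_1(Z)$. Since $Z$ is K\"ahler with $c_1(Z) < 0$, the Aubin--Yau theorem provides a K\"ahler--Einstein metric $\omega_Z$ on $Z$ with $\Rc(\omega_Z) = -\omega_Z$, so that $c_1(Z)$ is represented by $-\tfrac{1}{2\pi}\omega_Z$ in Bott--Chern cohomology and hence $f^* c_1(Z)$ by $-\tfrac{1}{2\pi} f^*\omega_Z$. Pairing with $a = [\tilde\omega^{n-1}]$ through the integration pairing $H^{1,1}_{BC}(M)\otimes H^{n-1,n-1}_A(M)\to\mathbb{C}$ gives $f^* c_1(Z)\cdot a = -\tfrac{1}{2\pi}\int_M f^*\omega_Z \wedge \tilde\omega^{n-1}$.

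The crux is the sign of this integral. The form $f^*\omega_Z$ is smooth, real, of type $(1,1)$, and semipositive everywhere (since $\omega_Z$ is positive and $f$ is holomorphic), and it is strictly positive exactly where $df$ maps onto $T^{1,0}Z$; as $\tilde\omega^{n-1}$ is a strictly positive $(n-1,n-1)$-form, $f^*\omega_Z\wedge\tilde\omega^{n-1}$ is a nonnegative multiple of the volume form, positive on the submersion locus $U$ of $f$. The one point needing a word of care is that $U$ is nonempty, open and dense: it is the complement of the zero set of the holomorphic section $\Lambda^{m}(df)$ (with $m = \dim_{\mathbb C}Z$), hence the complement of a proper analytic subset of the connected manifold $M$, hence dense; alternatively one argues via Sard's theorem exactly as in the proof of Theorem~\ref{thm:obstruction}, using that the preimage of the dense set of regular values is dense. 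It follows that $\int_M f^*\omega_Z\wedge\tilde\omega^{n-1} > 0$, so $f^* c_1(Z)\cdot a < 0$, completing the contradiction. I do not expect any serious obstacle here; everything is a direct combination of Theorem~\ref{thm:obstruction}, Aubin--Yau, and the elementary positivity of $f^*\omega_Z\wedge\tilde\omega^{n-1}$, and the only subtlety—density of the submersion locus—has already been dealt with in the proof of Theorem~\ref{thm:obstruction}.
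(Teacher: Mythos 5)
Your proposal is correct and follows essentially the same route as the paper: invoke Theorem \ref{thm:obstruction} to get a positive Aeppli class $a$ with $f^*c_1(Z)\cdot a \geq 0$, represent $c_1(Z)$ by a negative multiple of the Aubin--Yau K\"ahler--Einstein form, and derive the strict negativity of $f^*c_1(Z)\cdot a$ from the positivity of $f^*\omega_Z\wedge\tilde\omega^{n-1}$ on the dense submersion locus. The only differences are cosmetic (the $2\pi$ normalization and your alternative density argument via the vanishing locus of $\Lambda^m(df)$).
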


As a consequence of Corollary \ref{cor:obstructionfib}  we obtain examples of compact pluriclosed manifolds $(M^{2n},J)$ with $c_1(M) = 0 \in H^2(M,\mathbb{Z})$ which do not admit a Bismut Hermitian-Einstein metric. To the knowledge of the authors, this is the first class of such examples in the literature for dimension $n \geq 3$ (the case $n=2$ is settled by \cite{GauduchonIvanov}).  In order to present our examples we start with some general discussion of principal bundles over complex manifolds. Let $Z$ be a K\"ahler manifold. Let $T = \mathbb{C}^n/\Lambda$ be an $n$-dimensional complex torus. Let 
$$
\delta \colon H^1(T,\mathbb{Z}) \to H^2(Z,\mathbb{Z})
$$
be an homomorphism of $\mathbb{Z}$-modules such that $c_1(Z) \in \operatorname{Im} \delta$. We can identify $\delta$ with $c \in H^2(Z,\mathbb{Z}) \otimes \Lambda$, and hence it determines a topologically non-trivial principal $T$-bundle $\pi \colon M \to Z$ with characteristic class $c$. Assuming further that $\operatorname{Im} \delta \subset H^{1,1}(Z)$, $M$ can be endowed with a holomorphic structure. The first Chern class satisfies $c_1(M) = \pi^*c_1(Z) \in H^2(M,\mathbb{Z})$ and hence it vanishes because $c_1(Z) \in \operatorname{Im} \delta$ \cite{Hofer}. Furthermore, $M$ is non-K\"ahler by Blanchard's Theorem.

\begin{ex}\label{ex:TbundleRiemann}
Consider the case that $Z$ is a compact connected Riemann surface with genus $\geq 2$, and hence $c_1(Z) <0$. Let $\pi \colon M \to Z$ be a non-trivial principal $T$-bundle over $M$. By dimensional reasons the condition $c_1(Z) \in \operatorname{Im} \delta \subset H^{1,1}(Z)$ is always satisfied, and hence $c_1(M) = 0 \in H^2(M,\mathbb{Z})$. Choose a principal connection $\theta = (\theta_1,\ldots,\theta_{2n})$ on $M$ and define a $T$-invariant complex structure on $M$ by $J \theta_{2j-1} = \theta_{2j}$. Choose a K\"ahler metric $\omega_Z$ on $Z$ and consider the Hermitian form
$$
\omega = \pi^* \omega_Z + \sum_{j=1}^n \theta_{2j-1} \wedge \theta_{2j}.
$$
Then, we have
$$
dd^c \omega = -d^c \Bigg{(}\sum_{j=1}^n \pi^*F_{\theta_{2j-1}} \wedge \theta_{2j} - \theta_{2j-1} \wedge p^*F_{\theta_{2j}}\Bigg{)} = 0
$$
by dimensional reasons, where $F_{\theta_{j}}$ denotes the curvature of $\theta_j$. Therefore, $M$ is a pluriclosed manifold with vanishing first Chern class. Applying now Corollary \ref{cor:obstructionfib}, we conclude that $M$ does not admit a Bismut Hermitian-Einstein metric.
\end{ex}

\begin{ex} \label{ex:genT2bundle}
Let $Z$ be an algebraic complex surface with $c_1(Z) < 0$ and let $T = \mathbb{C}/\Lambda$. By the Aubin-Yau Theorem we have $\omega_Z$ a K\"ahler-Einstein metric on $Z$ with negative scalar curvature and $[\omega_Z] \in H^2(Z,\mathbb{Z})$. Choose $\alpha$ a primitive $(1,1)$-form with $[\alpha] \in H^2(Z,\mathbb{Z})$, and define $\delta$ so that its image is spanned by
$$
\delta_1 = - [\omega_Z] = c_1(Z), \qquad \delta_2 = [\alpha].
$$
On the corresponding $T$-bundle we can choose a connection $\theta = (\theta_1,\theta_2)$ with curvature 
$$
\frac{\i}{2\pi}F_\theta = (-\omega_Z,\alpha),
$$
and a $T$-invariant complex structure such that $J \theta_1 = \theta_2$. For any $u \in C^\infty(Z)$ we define the Hermitian metric $\omega = \pi^*e^u \omega_Z + \theta_1 \wedge \theta_2$. Then, a direct calculation shows that \cite{GGPoon}
$$
dd^c \omega = \pi^*(\Delta (e^u) \omega_Z^2 - \alpha^2 - \omega_Z^2),
$$
and hence the existence of a pluriclosed metric reduces to solve
$$
[\omega_Z]^2 = -  [\alpha] \cdot [\alpha] \in \mathbb{N}.
$$
Taking $\iota \colon Z \hookrightarrow \mathbb{P}^3$ a degree $d \geq 5$ projective hypersurface, we have $c_1(Z) = (4-d)\iota^*H$, for $H$ the hyperplane class, and we obtain the condition
$$
(d-4)^2 = - [\alpha] \cdot [\alpha] 
$$
for a primitive $(1,1)$-class $[\alpha]$. We have $H^2(Z,\mathbb{Z}) \cong \mathbb{Z}^{d(d(d-4) + 6) -2}$   and, assuming that $d$ is odd, the intersection pairing is given by the standard symmetric bilinear form with signature $-(d-2)d(d+2)/3$. Using Hirzebruch's formula for the Hodge numbers of projective hypersurfaces to calculate $h^{1,1}(M)$, one can prove that such a class always exists.  For example, taking $d = 5$ one has $H^2(Z,\mathbb{Z}) \cong \mathbb{Z}^{53}$, $h^{1,1}(M) = 45$, and signature $- 35$. Therefore, there exists a $36$-dimensional subspace of primitive $(1,1)$-classes, and hence there is a primitive $(1,1)$-class $[\alpha]$ with $[\alpha] \cdot [\alpha] = -1$.
\end{ex}

\section{Pluriclosed flow} \label{s:PCF}

In this section we turn to understanding the implications of Sections \ref{s:PCmetrics} and \ref{s:slopestability} for pluriclosed flow.  As explained in the introduction, a solution to pluriclosed flow is a one-parameter family of pairs $(\gw_t, \gb_t)$ on a complex manifold solving
\begin{align*}
\dt \gw =&\ - \rho_B^{1,1}, \qquad \dt \gb = - \rho_B^{2,0}.
\end{align*}
As a preliminary step we discuss an adaptation of the Schwarz Lemma to the setting of holomorphic Courant algebroids.  Given this, our first main task is to reinterpret pluriclosed flow as an evolution equation for the associated generalized Hermitian metrics $G_t$ (cf. Proposition \ref{p:Gflow}).  We use this to derive natural evolution equations for quantities measuring $G_t$ against some background choice of Hermitian metric on the associated holomorphic Courant algebroid, which is achieved using the Schwarz Lemma.  Next, in \S \ref{ss:PCFge} we give the proof of Theorem \ref{t:mainthm} using a priori estimates derived from the evolution equations for $G_t$ adapted to the case of a Bismut flat background.  Building on this we derive the consequences for generalized K\"ahler geometry in \S \ref{ss:GKflow}.  In \S \ref{ss:kod1} we prove Theorem \ref{t:kod1thm}.  The key point is to use that such surfaces are finitely covered by holomorphic principal $T^2$-bundles.  Using this structure we obtain partial estimates for $G_t$ by applying the Schwarz Lemma to the projection map of the fibration.  Using this partial estimate and then exploiting special properties of the Bismut curvature of a choice of invariant background metric yields the global existence.

\subsection{Schwarz Lemma}\label{s:Schwarz}

In this section we prove various useful formulae derived from the Schwarz Lemma for holomorphic sections on a holomorphic vector bundle. Let $\mathcal{W}$ be a holomorphic vector bundle over a complex manifold $(M,J)$. We fix a Hermitian metric $g$ on $M$ and define the Chern Laplacian on functions $f \in C^{\infty}(M)$ by 
$$
\Delta f := \frac{ n \i \partial \delb f  \wedge \omega^{n-1}}{\gw^n} = \tr_{\gw} (\i \del \delb f).
$$
Given a holomorphic section $w \in H^0(M,\mathcal{W})$, the classical Schwarz Lemma states that
\begin{equation}\label{eq:Schwarz}
\Delta |w|_h^2 = |\N^C_h w|^2_{g,h} - \IP{S^h_g w,w}_h
\end{equation}
for any choice of Hermitian metric $h$ on $\mathcal{W}$ with Chern connection $\N^C_h$. Note that $|\N^C_h w|^2_{g,h}$ is calculated using the background Hermitian metric $g$ jointly with the given Hermitian metric on the bundle. 

We are interested in the application of formula \eqref{eq:Schwarz} to the following general setup: Let $\pi_{\mathcal E}\colon \mathcal{E} \to M$ and $\pi_{\mathcal F} \colon \mathcal{F} \to N$ denote holomorphic vector bundles over complex manifolds $M$ and $N$ respectively.  Suppose $\Phi: \mathcal{E} \to \mathcal{F}$ is a morphism of holomorphic vector bundles covering $\phi : M\to N$, i.e. there is a commutative diagram
\[
\begin{tikzcd}
\mathcal E \arrow{r}{\Phi} \arrow[swap]{d}{\pi_{\mathcal E}} & \mathcal F \arrow{d}{\pi_{\mathcal F}} \\
M \arrow{r}{\phi}& N.
\end{tikzcd}
\]
Given such a map, there is a tautological holomorphic section of $\mathcal E^* \otimes \phi^* \mathcal F$ which, by abuse of notation, we denote also
$$
\Phi \in H^0(M, \mathcal E^* \otimes \phi^* \mathcal F).
$$
Furthermore, any pair of Hermitian metrics $G$ and $\til{G}$ on $\mathcal E$ and $\mathcal F$, respectively, induce a Hermitian metric $G^{-1}\otimes \til{G}$ on $\mathcal E^* \otimes \phi^* \mathcal F$. Observe that, for any choice of frames on $\mathcal E$ and $\mathcal F$, one has
$$
|\Phi|^2_{G^{-1},\tilde{G}} = (\phi^* \tilde G_{\alpha \overline{ \gamma}}) \Phi_j^\alpha \overline{\Phi_k^{\gamma}}G^{j \overline k}.
$$
Furthermore, the induced Chern connection $\nabla^{C,G^{-1},\til{G}}$ acts on $\Phi$, defining a tensor $A(G, \til{G}, \Phi)$ by
\begin{align} \label{f:Adef}
A(e) = (\nabla^{C,G^{-1},\til{G}}\Phi)(e) = \phi^*\nabla^{C,\til{G}}(\Phi e) - \Phi(\nabla^{C,G}e).
\end{align}
for any smooth section $e$ of $\mathcal E$. As a direct application of \eqref{eq:Schwarz} we obtain the following:

\begin{lemma} \label{l:Schwarz} Let $\mathcal{E} \to M$ and $\mathcal{F} \to N$ denote holomorphic vector bundles over complex manifolds $M$ and $N$ respectively, and suppose $\Phi: \mathcal{E} \to \mathcal{F}$ is a holomorphic map of vector bundles covering $\phi : M\to N$.  Given $g$ a Hermitian metric on $M$, $G$ a Hermitian metric on $\mathcal E$ and $\til{G}$ a metric on $\mathcal F$, one has
\begin{equation*}
\Delta|\Phi|^2_{G^{-1},\tilde{G}}= |A|^2_{g,G^{-1},\til{G}} + \IP{\Phi \circ S_g^{G} - \phi^*S_g^{\tilde G} \circ \Phi,\Phi}_{G^{-1},\tilde{G}}.
\end{equation*}
\end{lemma}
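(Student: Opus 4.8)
The plan is to deduce this directly from the classical Schwarz identity \eqref{eq:Schwarz}, applied to a single auxiliary holomorphic bundle. First I would record the structural facts that make this work: since $\phi\colon M\to N$ is holomorphic, the pullback $\phi^*\mathcal{F}$ is a holomorphic vector bundle over $M$, and hence so is $\WW:=\mathcal{E}^*\otimes\phi^*\mathcal{F}\cong\operatorname{Hom}(\mathcal{E},\phi^*\mathcal{F})$; and because $\Phi$ is a morphism of holomorphic vector bundles covering $\phi$, the tautological section $\Phi\in H^0(M,\WW)$ is holomorphic. I would equip $\WW$ with the tensor product Hermitian metric $h:=G^{-1}\otimes\phi^*\til{G}$, so that $\brs{\Phi}^2_h=\brs{\Phi}^2_{G^{-1},\til{G}}$ in the notation of the statement. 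Then \eqref{eq:Schwarz} applied to $\Phi$ reads
\begin{align*}
\Delta\brs{\Phi}^2_{G^{-1},\til{G}}=\brs{\N^C_h\Phi}^2_{g,h}-\IP{S^h_g\Phi,\Phi}_h,
\end{align*}
and it remains only to identify the two terms on the right. I would note this is legitimate for an arbitrary Hermitian metric $g$ on $M$, with no Gauduchon hypothesis needed, since the Chern Laplacian appearing in \eqref{eq:Schwarz} is defined pointwise by contracting $\i\del\delb\brs{\Phi}^2_h$ with $\gw$.

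For the connection term, the Chern connection of $(\WW,h)$ is the tensor product of the Chern connection of $(\mathcal{E}^*,G^{-1})$, which is dual to $\N^{C,G}$, with that of $(\phi^*\mathcal{F},\phi^*\til{G})$, the latter being the pullback $\phi^*\N^{C,\til{G}}$ because $\phi$ is holomorphic. Under the identification $\WW\cong\operatorname{Hom}(\mathcal{E},\phi^*\mathcal{F})$ this is precisely the connection sending a smooth section $\Psi$ to the tensor $e\mapsto\phi^*\N^{C,\til{G}}(\Psi e)-\Psi(\N^{C,G}e)$. Evaluating on $\Phi$, this is exactly the tensor $A=A(G,\til{G},\Phi)$ of \eqref{f:Adef}, so $\N^C_h\Phi=A$ and $\brs{\N^C_h\Phi}^2_{g,h}=\brs{A}^2_{g,G^{-1},\til{G}}$.

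For the curvature term, the Chern curvature of $(\WW,h)$, transported through $\WW\cong\operatorname{Hom}(\mathcal{E},\phi^*\mathcal{F})$, acts on a section $\Psi$ by $\Psi\mapsto(\phi^*\Omega^{\til{G}})\circ\Psi-\Psi\circ\Omega^{G}$, where $\Omega^{G}$ and $\Omega^{\til{G}}$ are the Chern curvatures of $(\mathcal{E},G)$ and $(\mathcal{F},\til{G})$ and composition is taken on the endomorphism parts of these $(1,1)$-form-valued tensors. Applying $\i\tr_{\gw}$ and writing $\phi^*S^{\til{G}}_g:=\i\tr_{\gw}(\phi^*\Omega^{\til{G}})$, the endomorphism $S^h_g$ of $\WW$ satisfies $S^h_g\Phi=\phi^*S^{\til{G}}_g\circ\Phi-\Phi\circ S^G_g$. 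Substituting both identifications into the displayed Schwarz identity yields
\begin{align*}
\Delta\brs{\Phi}^2_{G^{-1},\til{G}}=\brs{A}^2_{g,G^{-1},\til{G}}+\IP{\Phi\circ S^G_g-\phi^*S^{\til{G}}_g\circ\Phi,\Phi}_{G^{-1},\til{G}},
\end{align*}
which is the claim. The only point requiring genuine care is the bookkeeping of sign conventions — the minus sign in the curvature of the dual bundle and the order of composition in $\operatorname{Hom}(\mathcal{E},\phi^*\mathcal{F})$ — together with checking that the normalization of the pairing $\IP{\,\cdot\,,\,\cdot\,}_{G^{-1},\til{G}}$ built from $G$ and $\til{G}$ agrees with the one induced by $h$ on $\WW$; no analytic difficulty arises, the substance being entirely the Bochner computation underlying \eqref{eq:Schwarz}.
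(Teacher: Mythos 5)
Your proposal is correct and follows exactly the route the paper takes: the lemma is obtained by applying the classical identity \eqref{eq:Schwarz} to the tautological holomorphic section $\Phi \in H^0(M, \mathcal E^* \otimes \phi^*\mathcal F)$ equipped with the tensor product metric $G^{-1}\otimes \phi^*\til{G}$, identifying $\N^C_h \Phi$ with the tensor $A$ of \eqref{f:Adef} and $S^h_g\Phi$ with $\phi^*S^{\til G}_g\circ\Phi - \Phi\circ S^G_g$. The sign bookkeeping you flag is handled correctly, and the paper itself records no additional content beyond this.
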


We next apply next Lemma \ref{l:Schwarz} to various situations of our interest. The simplest case is to apply it to the identity map of a fixed holomorphic vector bundle.  We note that in this case the tensor $A = \nabla^{C,G^{-1},\til{G}} \Phi$ is the difference of two Chern connections, and we use some more common notation for this:

\begin{defn} \label{d:Upsdef} Given $G, \til{G}$ Hermitian metrics on a holomorphic vector bundle $\mathcal{E}$ over $M$, let
\begin{align*}
\gU(G, \til{G}) := \N^C_G - \N^C_{\til{G}} \in T_{1,0}^* \otimes \End(\mathcal{E}).
\end{align*}
denote the difference of the associated Chern connections.  When taking the norm of $\gU$, we require a metric on $T_{1,0}^*$ as well as one on $\mathcal{E}$ and $\mathcal{E}^*$.  These choices will be denoted explicitly as subscripts, using possibly a given metric and its inverse on both $\mathcal{E}$ and $\mathcal{E}^*$ if it is not explicitly indicated.
\end{defn}

\begin{lemma}\label{l:Schwarz1} Let $\mathcal{E} \to M$ be a holomorphic vector bundle over a complex manifold $M$.  Given $g$ a Hermitian metric on $M$ and $G$ and $\til{G}$ Hermitian metrics on $\mathcal E$, one has that
\begin{equation*}
\Delta(\tr_{G}\tilde{G})= |\gU(G, \til{G})|^2_{g,G^{-1},\til{G}} + \tr_G \IP{\Big{(}S^{G}_g - S^{\tilde G}_g\Big{)}\cdot , \cdot}_{\tilde G}.
\end{equation*}
\begin{proof}
It follows from Lemma \ref{l:Schwarz}, setting $M = N$, $\mathcal{E} = \mathcal{F}$, and $\Phi = \Id$.
\end{proof}
\end{lemma}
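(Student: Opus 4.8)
The plan is to apply Lemma \ref{l:Schwarz} in the degenerate case where the source and target bundles, base manifolds, and covering maps all coincide, and where the bundle morphism is the identity. Concretely, I would set $N = M$, $\phi = \Id_M$, $\mathcal{F} = \mathcal{E}$, and $\Phi = \Id_{\mathcal{E}} \in H^0(M, \mathcal{E}^* \otimes \mathcal{E})$, with the Hermitian metric on $\mathcal{E}$ (in the role of the ``source'' metric $G$ of Lemma \ref{l:Schwarz}) equal to $G$ and the metric on $\mathcal{F} = \mathcal{E}$ (in the role of the ``target'' metric $\til G$) equal to $\til G$. The only genuine things to verify are that under these identifications the three quantities appearing in Lemma \ref{l:Schwarz} specialize to the three quantities in the present statement.

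First I would identify $|\Phi|^2_{G^{-1}, \til G}$ with $\tr_G \til G$: with $\Phi = \Id$, in any local frame $\Phi^{\ga}_j = \gd^{\ga}_j$, so the formula $|\Phi|^2_{G^{-1},\til G} = \til G_{\ga \bgg}\Phi^{\ga}_j \overline{\Phi^{\gg}_k} G^{j \bk}$ from the excerpt becomes $\til G_{j \bk} G^{j \bk} = \tr_G \til G$, which is exactly the standard trace of $\til G$ with respect to $G$ (independent of frame). Second, I would identify the tensor $A = A(G, \til G, \Phi)$ of \eqref{f:Adef} with $\gU(G, \til G) = \N^C_G - \N^C_{\til G}$ of Definition \ref{d:Upsdef}: since $\Phi = \Id$ and $\phi = \Id$, \eqref{f:Adef} reads $A(e) = \N^{C,\til G}(e) - \N^{C,G}(e)$ for every smooth section $e$, i.e. $A = \N^C_{\til G} - \N^C_G = - \gU(G, \til G)$; the sign is irrelevant since only $|A|^2$ enters, so $|A|^2_{g, G^{-1}, \til G} = |\gU(G,\til G)|^2_{g, G^{-1}, \til G}$. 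Third, the reaction term $\IP{\Phi \circ S^G_g - \phi^* S^{\til G}_g \circ \Phi, \Phi}_{G^{-1}, \til G}$ becomes $\IP{S^G_g - S^{\til G}_g, \Id}_{G^{-1}, \til G}$, and contracting the endomorphism $S^G_g - S^{\til G}_g$ against $\Id$ in this pairing is precisely $\tr_G \IP{(S^G_g - S^{\til G}_g)\cdot, \cdot}_{\til G}$ — again a short frame computation, matching the term in the statement.

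Then the conclusion of Lemma \ref{l:Schwarz} is literally the asserted identity
\begin{equation*}
\gD(\tr_G \til G) = |\gU(G, \til G)|^2_{g, G^{-1}, \til G} + \tr_G \IP{\big( S^G_g - S^{\til G}_g \big)\cdot, \cdot}_{\til G},
\end{equation*}
so the proof is immediate once the three dictionary identifications above are recorded. There is no real obstacle here: the only thing to be mildly careful about is the bookkeeping of which metric plays the role of ``$G$'' versus ``$\til G$'' in Lemma \ref{l:Schwarz} (they appear asymmetrically there, through $G^{-1} \otimes \til G$ on $\mathcal{E}^* \otimes \phi^* \mathcal{F}$), and checking that this asymmetry is consistent with writing $\tr_G \til G$ and with the placement of $S^G_g$ versus $S^{\til G}_g$ in the reaction term. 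Since the statement of this lemma is presented in the excerpt as following ``It follows from Lemma \ref{l:Schwarz}, setting $M = N$, $\mathcal{E} = \mathcal{F}$, and $\Phi = \Id$,'' I would keep the written proof to essentially that one sentence, perhaps with a parenthetical noting that $|\Id|^2_{G^{-1},\til G} = \tr_G \til G$ and $A(G,\til G,\Id) = \gU(G,\til G)$ up to sign.
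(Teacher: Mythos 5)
Your proposal is correct and is exactly the paper's proof: the authors also simply specialize Lemma \ref{l:Schwarz} to $N=M$, $\mathcal{F}=\mathcal{E}$, $\phi=\Id_M$, $\Phi=\Id_{\mathcal{E}}$, leaving the dictionary identifications ($|\Id|^2_{G^{-1},\til G}=\tr_G\til G$, $A(G,\til G,\Id)=-\gU(G,\til G)$, and the specialization of the reaction term) implicit. Your explicit verification of those identifications, including the harmless sign in $A$, is accurate and only makes the one-line argument more transparent.
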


We next consider specifically the case of generalized Hermitian metrics on exact holomorphic Courant algebroids, as in Proposition \ref{p:GQ0}.

\begin{lemma} \label{l:Schwarz2} 
Let $(M,g_0,J)$ be a pluriclosed manifold. Consider the associated orthogonal holomorphic vector bundle $\QQ = \mathcal Q_{\i \del \gw_0}$ as in Definition \ref{def:Q0}, and a generalized Hermitian metric $G = G(\gw, \gb)$ as in Proposition \ref{p:GQ0}. Choose an arbitrary Hermitian metric $\til{G}$ on $\QQ$. Then, one has that
\begin{equation*}
\Delta(\tr_{G}\tilde{G})= |\gU(G, \til{G})|^2_{g,G^{-1},\til{G}} + \tr_G \IP{\Big{(}S_g^{G} - S^{\til{G}}_g \Big{)}\cdot , \cdot}_{\tilde G}.
\end{equation*}
Furthermore, provided that we take $\til{G} = G(\omega_0,0)$, one also has that
\begin{align*}
\psi_{g_0}^{-1} \circ \gU(G, \til{G}) \circ \psi_{g} & = (\varphi_{g_0,g,\beta})\cdot \N^-_{g} - \N^-_{g_0}\\
\tr_{G}\tilde{G} & = \tr_{\tilde{G}} G = \tr_g g_0 + \tr_{g_0} g + |\beta|^2_{g,g_0}
\end{align*}
where $\psi_{g}$, $\psi_{g_0}$ are as in Section \ref{s:BismutId}, and $(\varphi_{g_0,g,\beta})\cdot\N^-_{g} $ denotes the action on $\N^-_{g}$ of the complex gauge transformation $\varphi_{g_0,g,\beta} \in \End (TM \otimes \mathbb{C})$, given by
$$
\varphi_{g_0,g,\beta}(X) = X^{1,0} + g_0^{-1}(gX^{0,1} + \sqrt{-1}\beta X^{1,0}) .
$$
\begin{proof}
The first part of the statement is a special case of Lemma \ref{l:Schwarz1}. Assuming now $\til{G} = G(\omega_0,0)$ and setting $G' = G(\omega,0)$, one has that
$$
\tr_{G}\tilde{G} = |e^{-\sqrt{-1}\beta}|^2_{G'^{-1},\tilde{G}} = (g_0)_{j \overline{k}} g^{j\overline{k}} + g_{j \overline{k}} g_0^{j\overline{k}} +  g_0^{l \overline{m}} g^{j\overline{k}}\beta_{jl} \overline{\beta_{km}} = \tr_g g_0 + \tr_{g_0} g + |\beta|^2_{g,g_0}.
$$
Finally, using Proposition \ref{p:GQ0}, one has that
\begin{align*}
\gU(G, \til{G}) & = (e^{ \sqrt{-1}\beta})^*\N^C_{G'} - \N^C_{\til{G}}\\
& = (e^{-\sqrt{-1}\beta} \circ \psi_g)_*\N^{-}_g - (\psi_{g_0})_*\N^{-}_{g_0}\\
& = \psi_{g_0} \circ ((\psi_{g_0}^{-1} \circ e^{-\sqrt{-1}\beta} \circ \psi_g)\cdot\N^{-}_g - \N^{-}_{g_0}) \circ \psi_{g_0}^{-1}
\end{align*}
and also that
\begin{align*}
 \psi_{g_0}^{-1} \circ e^{-\sqrt{-1}\beta} \circ \psi_g(X) & = \psi_{g_0}^{-1} \circ e^{- \sqrt{-1}\beta}(X^{1,0} - gX^{0,1})\\
 & =  \psi_{g_0}^{-1}(X^{1,0} - gX^{0,1} - \sqrt{-1}\beta X^{1,0})\\
 & = X^{1,0} + g_0^{-1}(gX^{0,1} + \sqrt{-1}\beta X^{1,0}).
\end{align*}
\end{proof}
\end{lemma}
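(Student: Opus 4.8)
I would treat the three assertions in turn, each following quickly from material already in place. The identity for $\Delta(\tr_G \til{G})$ holds for an arbitrary pair of Hermitian metrics on $\QQ$: it is exactly Lemma \ref{l:Schwarz1} applied to the holomorphic vector bundle $\mathcal{E} = \QQ = \mathcal{Q}_{\i\del\gw_0}$, with the given Hermitian metric $g$ on $M$ used to form the Chern Laplacian and the traces. So for this part nothing is needed beyond citing the earlier lemma.

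For the trace formula, assume $\til{G} = G(\gw_0, 0)$, the block-diagonal metric of \eqref{eq:G} built from $g_0$. I would first express $\tr_G\til{G}$ as a squared norm of a bundle morphism: by Proposition \ref{p:GQ0} the $B$-field transform $e^{-\sqrt{-1}\gb}\colon \mathcal{Q}_{\i\del\gw}\to\mathcal{Q}_{\i\del\gw_0}$ is a holomorphic orthogonal isomorphism with $G = (e^{\sqrt{-1}\gb})^* G(\gw,0)$, whence $\tr_G\til{G} = |e^{-\sqrt{-1}\gb}|^2_{G(\gw,0)^{-1},\,\til{G}}$. Evaluating this norm in a coordinate frame $\{Z^i,W^i\}$, with $e^{-\sqrt{-1}\gb}$ written as the lower-triangular block matrix with entries $\Id$, $-\sqrt{-1}\gb$, $\Id$ and paired against the block-diagonal metrics $G(\gw,0)$ and $G(\gw_0,0)$, the three nonzero blocks contribute $\tr_g g_0$, $\tr_{g_0} g$, and $|\gb|^2_{g,g_0}$, giving the stated formula. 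The symmetry $\tr_G\til{G} = \tr_{\til{G}}G$ I would obtain either by repeating the computation with $e^{\sqrt{-1}\gb}\colon\mathcal{Q}_{\i\del\gw_0}\to\mathcal{Q}_{\i\del\gw}$ in place of $e^{-\sqrt{-1}\gb}$ — the term $|\gb|^2$ being manifestly symmetric in the two metrics — or from the fact that $G$ and $\til{G}$ are both compatible with the holomorphic pairing on $\QQ$.

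For the connection identity, the inputs are Proposition \ref{p:GQ0}, which can be rephrased as $\N^C_G = (e^{-\sqrt{-1}\gb}\circ\psi_g)_*\N^-_g$, and Proposition \ref{p:Bismutid} applied to the background metric $g_0$, which gives $\N^C_{\til{G}} = (\psi_{g_0})_*\N^-_{g_0}$. Substituting these into $\gU(G,\til{G}) = \N^C_G - \N^C_{\til{G}}$ and factoring out the conjugation by $\psi_{g_0}$ turns the difference of connections on $\QQ$ into a difference of connections on $TM\otimes\mathbb{C}$, namely $(\varphi_{g_0,g,\gb})\cdot\N^-_g - \N^-_{g_0}$, where the complex gauge transformation is $\varphi_{g_0,g,\gb} := \psi_{g_0}^{-1}\circ e^{-\sqrt{-1}\gb}\circ\psi_g$. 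Its explicit form then drops out by composing $\psi_g(X) = X^{1,0} - g(X^{0,1})$ from \eqref{eq:Bismutisom}, the lower-triangular action of $e^{-\sqrt{-1}\gb}$, and $\psi_{g_0}^{-1}(Y+\eta) = Y - g_0^{-1}\eta$ from Lemma \ref{l:Psicxisom}, yielding $\varphi_{g_0,g,\gb}(X) = X^{1,0} + g_0^{-1}(g X^{0,1} + \sqrt{-1}\gb X^{1,0})$.

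The main obstacle here is bookkeeping rather than any genuine difficulty. One must keep $\mathcal{Q}_{\i\del\gw}$ and $\mathcal{Q}_{\i\del\gw_0}$ carefully apart — they coincide only as smooth orthogonal bundles, through $e^{\pm\sqrt{-1}\gb}$ — and fix consistent pushforward and pullback conventions for connections along these and along the $\psi$-isomorphisms, so that the chains of identifications coming from Propositions \ref{p:GQ0} and \ref{p:Bismutid} compose correctly. Once those conventions are pinned down, each remaining step is a routine local-frame computation.
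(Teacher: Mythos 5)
Your proposal is correct and follows essentially the same route as the paper's proof: the first identity is quoted from Lemma \ref{l:Schwarz1}, the trace formula is obtained by writing $\tr_G\til{G}=|e^{-\sqrt{-1}\beta}|^2_{G(\omega,0)^{-1},\til{G}}$ and expanding in a block frame, and the connection identity comes from substituting the identifications of Propositions \ref{p:GQ0} and \ref{p:Bismutid} into $\gU(G,\til{G})=\N^C_G-\N^C_{\til{G}}$ and conjugating by $\psi_{g_0}$. The only (harmless) addition is your explicit remark on the symmetry $\tr_G\til{G}=\tr_{\til{G}}G$, which the paper leaves to the manifestly symmetric final formula.
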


\subsection{Pluriclosed flow and holomorphic Courant algebroids}

\begin{prop} \label{p:Gflow} Given $(M^{2n}, \gw_t, \gb_t, J)$ a solution to pluriclosed flow, the associated family of generalized Hermitian metrics $G_t$ on $\QQ_{\i \del \gw_0}$ satisfies
\begin{align*}
G^{-1} \dt G =&\ - S^G_g.
\end{align*}

\begin{proof}
Fix $p\in M$ and $t>0$. Conjugating by a constant $B$-field transformation we may assume without loss of generality that $\gb_t(p)=0$.  Given this setup, using Proposition \ref{p:GQ0} and Proposition \ref{p:SecondGChernCurvature} we can compute that 
\begin{align*}
\del_t G(X+ \xi,X+\xi) & = \del_t g (X,\overline X) - \del_t g (g^{-1}\xi,\overline{g^{-1}\xi}) +  \sqrt{-1} g^{-1}(\del_t \beta(X),\overline{\xi}) - \sqrt{-1} g^{-1}(\xi,\del_t \overline{\beta(X)})\\
& = - \sqrt{-1}  \del_t \omega (X,\overline X) - \sqrt{-1}  \del_t \omega (g^{-1}\xi,\overline{g^{-1}\xi})\\
& + \sqrt{-1} \del_t \beta(X,g^{-1}\overline{\xi}) - \sqrt{-1} \del_t \overline{\beta}(\overline{X},g^{-1}\xi),\\
G(S^G_g(X+ \xi),X+ \xi) & =  \sqrt{-1} G(-g^{-1}\rho_B^{1,1}X + g^{-1}\rho_B^{0,2}g^{-1}\xi + \rho_B^{2,0}X - \rho_B^{1,1} g^{-1} \xi,X+ \xi)\\
& =  \sqrt{-1} (-\rho_B^{1,1}(X,\overline{X}) + \rho_B^{0,2}(g^{-1}\xi,\overline{X})\\
& + \rho_B^{2,0}(X,g^{-1}\overline{\xi}) - \rho_B^{1,1}(g^{-1} \xi,g^{-1}\overline{\xi}))
\end{align*}
for any $X + \xi \in T^{1,0} \oplus T_{1,0}^*$, where we have used that $\del_t g = \del_t \omega(,J)$. Therefore, the family satisfies $G^{-1}\dt G = - S^G_g$ if and only if
\begin{align*}
\dt \gw =&\ - \rho_B^{1,1}, \qquad \dt \gb = - \rho_B^{2,0},\qquad \dt \overline{\gb} = - \rho_B^{0,2}.
\end{align*}
Using now that $\rho_B$ is a real two-form, the statement follows.
\end{proof}
\end{prop}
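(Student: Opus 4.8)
The plan is to verify the identity pointwise in spacetime, exploiting the explicit formulas of Proposition~\ref{p:GQ0} and Proposition~\ref{p:SecondGChernCurvature}. Fix $p \in M$ and $t > 0$. Since a constant $(2,0)$-form is automatically $d$-closed and can realize any prescribed value at $p$, I would conjugate $\QQ_{\i\del\gw_0}$ by the (fixed, $t$-independent) holomorphic $B$-field transformation given by the constant extension of $-\gb_t(p)$; this does not alter $\dt \gb$ at $p$, and since both $G^{-1}\dt G$ and $S^G_g$ are genuine sections of $\End(\QQ)$ transforming equivariantly under such a conjugation, it suffices to verify the identity in the resulting gauge, in which $\gb_t(p) = 0$ and the twisting factors $e^{\i\gb}$ appearing in Propositions~\ref{p:GQ0} and~\ref{p:SecondGChernCurvature} act trivially at $p$.

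In this gauge I would differentiate the formula $G(X+\xi,X+\xi) = g(X,\bar X) + g^{-1}(\xi + \i\gb(X), \bar\xi - \i\overline{\gb(X)})$ in $t$ and evaluate at $p$, using that $J$ is fixed (so $\dt g = \dt\gw(\cdot, J\cdot)$) and $\dt g^{-1} = -g^{-1}(\dt g)g^{-1}$. Because $\gb(p)=0$, the $\dt\gw$-terms land in the two ``diagonal'' slots (acting on $T^{1,0}$ and, via $g^{-1}$, on $T^*_{1,0}$) while the $\dt\gb$- and $\dt\bar\gb$-terms land in the ``off-diagonal'' slots. In parallel I would expand $G(S^G_g(X+\xi), X+\xi)$ directly from the block matrix of Proposition~\ref{p:SecondGChernCurvature}, whose entries are built from $\rho_B^{1,1}$, $\rho_B^{2,0}$ and $\rho_B^{0,2}$ sitting in exactly the corresponding positions. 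Matching the two quadratic forms slot by slot, the equation $G^{-1}\dt G = -S^G_g$ at $(p,t)$ reduces to the three scalar identities $\dt\gw = -\rho_B^{1,1}$, $\dt\gb = -\rho_B^{2,0}$, and $\dt\bar\gb = -\rho_B^{0,2}$.

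The first two are precisely the defining equations of pluriclosed flow. The third is not an additional hypothesis: conjugating $\dt\gb = -\rho_B^{2,0}$ and using that $\rho_B$ is a real two-form (so $\overline{\rho_B^{2,0}} = \rho_B^{0,2}$) yields it for free. Since $p$ and $t$ were arbitrary and the gauge reduction was harmless, this establishes the proposition. I expect the main (and essentially only) obstacle to be the bookkeeping in the middle step --- tracking the musical isomorphism $g^{-1}\colon T^*_{1,0}\to T^{0,1}$, the $(p,q)$-decomposition of $\rho_B$, and the signs coming from $\gw = gJ$ versus $g(X,\bar X) = -\i\gw(X,\bar X)$ on $T^{1,0}$ --- after which nothing conceptual remains. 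One could alternatively bypass the explicit computation using Proposition~\ref{p:Bismutid} together with the identification of connections in Proposition~\ref{p:GQ0}, but the direct route is shorter here.
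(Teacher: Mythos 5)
Your proposal is correct and follows essentially the same route as the paper's proof: gauge away $\gb_t(p)$ by a constant $B$-field transformation, differentiate the explicit formula for $G$ from Proposition \ref{p:GQ0}, compare slot by slot with the block form of $S^G_g$ from Proposition \ref{p:SecondGChernCurvature}, and observe that the resulting three scalar equations reduce to the pluriclosed flow equations because $\rho_B$ is real. The only additions are your (welcome) explicit justification that the conjugation is harmless and the remark about the alternative via Proposition \ref{p:Bismutid}, neither of which changes the argument.
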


The Schwarz Lemma computations from Section \ref{s:Schwarz} will help us derive useful evolution equations for measuring the metric along pluriclosed flow.  

\begin{prop} \label{p:paraSL} Fix $(M^{2n}, \gw_t, \gb_t, J)$ a solution to pluriclosed flow, with $G_t$ the associated family of generalized Hermitian metrics on $\QQ_{\i \del \gw_0}$.  Given $\til{G}$ a Hermitian metric on $\QQ_{\i \del \gw_0}$, we have
\begin{align*}
\left(\dt - \gD \right) \tr_G \til{G} =&\ - \brs{\gU(\til{G},G)}^2_{g,G^{-1},\til{G}} + \tr_G \IP{ S^{\til{G}}_g \cdot, \cdot}_{\til{G}}.
\end{align*}
\begin{proof} 
Applying Lemma \ref{l:Schwarz2} we obtain
\begin{align*}
\dt \tr_G \til{G} =&\ \tr G^{-1} S^G_g \til{G}\\
=&\ \tr_G \IP{ S^{{G}}_g \cdot, \cdot}_{\til{G}}\\
=&\ \Delta \tr_{G} \til{G} - \brs{\gU(\til{G},G)}^2_{g,G^{-1},\til{G}} + \tr_G \IP{ S^{\til{G}}_g \cdot, \cdot}_{\til{G}},
\end{align*}
as required.
\end{proof}
\end{prop}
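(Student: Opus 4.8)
The plan is to read off the parabolic evolution of $\tr_G \til G$ by combining the identification of pluriclosed flow with the equation $G^{-1}\dt G = -S^G_g$ from Proposition \ref{p:Gflow} together with the elliptic Schwarz Lemma identity of Lemma \ref{l:Schwarz2}. No genuinely new estimate is needed; the content is purely algebraic manipulation of these two facts.

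First I would differentiate in time. Since $\til G$ is a fixed background metric while $G = G_t$ evolves, we may write $\tr_G \til G = \tr(G^{-1}\til G)$, and using $\dt(G^{-1}) = -G^{-1}(\dt G) G^{-1}$ obtain
\[
\dt \tr_G \til G = -\tr\big(G^{-1}(\dt G) G^{-1}\til G\big).
\]
By Proposition \ref{p:Gflow} one has $G^{-1}\dt G = -S^G_g$, equivalently $\dt G = -G\, S^G_g$; substituting and keeping track of the ordering yields
\[
\dt \tr_G \til G = \tr_G \IP{ S^G_g \cdot, \cdot}_{\til G}.
\]

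Next I would invoke Lemma \ref{l:Schwarz2}, which is the $\Phi = \Id$ case of the Schwarz Lemma for holomorphic Courant algebroids and states, for the Chern Laplacian $\gD = \tr_\gw(\i\del\delb\,\cdot)$ of the metric $g = g_t$,
\[
\gD \tr_G \til G = \brs{\gU(G,\til G)}^2_{g,G^{-1},\til G} + \tr_G \IP{\big(S^G_g - S^{\til G}_g\big)\cdot,\cdot}_{\til G}.
\]
Solving for $\tr_G \IP{ S^G_g \cdot, \cdot}_{\til G}$ and inserting the result of the previous step gives
\[
\left(\dt - \gD\right) \tr_G \til G = -\brs{\gU(G,\til G)}^2_{g,G^{-1},\til G} + \tr_G \IP{ S^{\til G}_g \cdot, \cdot}_{\til G}.
\]
Since $\gU(\til G, G) = \N^C_{\til G} - \N^C_G = -\gU(G,\til G)$ by Definition \ref{d:Upsdef}, the two have the same pointwise norm, and this is exactly the asserted identity.

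The computation presents no real obstacle; the only points demanding care are (i) that the Laplacian $\gD$, the base metric $g$ used in $\brs{\gU(G,\til G)}^2_{g,G^{-1},\til G}$, and the fiber traces are all taken with respect to the time-dependent pair $(g_t, G_t)$, so that the elliptic identity of Lemma \ref{l:Schwarz2} may be applied verbatim at each fixed time; and (ii) the sign and composition bookkeeping in passing from $\dt(G^{-1}\til G)$ to $\tr_G \IP{ S^G_g \cdot, \cdot}_{\til G}$ via Proposition \ref{p:Gflow}, together with the harmless replacement of $\gU(\til G, G)$ by $\gU(G,\til G)$ inside the norm.
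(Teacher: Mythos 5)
Your proposal is correct and follows essentially the same route as the paper: differentiate $\tr_G\til{G}$ in time using Proposition \ref{p:Gflow}, then substitute the elliptic identity of Lemma \ref{l:Schwarz2} applied at each fixed time. Your extra remark that $\gU(\til{G},G)=-\gU(G,\til{G})$ have equal norms correctly reconciles the notation in the statement with that of Lemma \ref{l:Schwarz2}, a point the paper leaves implicit.
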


\begin{prop} \label{p:Upsev} Fix $(M^{2n}, \gw_t, \gb_t, J)$ a solution to pluriclosed flow, with $G_t$ the associated family of generalized metrics on $\QQ_{\i \del \gw}$.  Given $\til{G}$ a Hermitian metric on $\QQ_{\i \del \gw}$, we have
\begin{align*}
\left( \dt -\gD \right) \brs{\gU(G, \til{G})}^2_{g,G^{-1},G} =&\ - \brs{\N \gU}^2_{g,G^{-1},G} - \brs{\bar{\N} \gU + \bar{T} \cdot \gU}^2_{g,G^{-1},G}\\
&\ + T \star \gU \star \Omega^{\til{G}} + \gU \star \bar{\gU} \star \Omega^{\til{G}} + \gU \star \N^{\til{G}} \Omega^{\til{G}},
\end{align*}
where
\begin{align*}
\left(\bar{T} \cdot \gU \right)_{\bi j A}^B =&\ g^{\bk l} \bar{T}_{\bi \bk  j} \gU_{l A}^B.
\end{align*}
\begin{proof} As we have established the evolution equation $G^{-1} \dt G = -S^G_g$ in the general setting here of twisted Courant algebroids in Proposition \ref{p:Gflow}, the result follows formally exactly as in \cite[Proposition 3.3]{JordanStreets}.
\end{proof}
\end{prop}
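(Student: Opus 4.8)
The plan is to run a Bochner-type computation for $\brs{\gU(G,\til G)}^2_{g,G^{-1},G}$ in direct parallel with the analogous estimate for the Hermitian-Yang-Mills flow, the only genuinely new input being the evolution equation $G^{-1}\dt G = -S^G_g$ established for twisted holomorphic Courant algebroids in Proposition \ref{p:Gflow}. First I would compute $\dt\gU$: since $\til G$ is time-independent, $\dt\gU(G,\til G)=\dt\N^C_G$, and the standard variational formula for the Chern connection (obtained by differentiating its defining identity in a local holomorphic frame) gives $\dt\N^C_G=\bigl(\N^C_G(G^{-1}\dt G)\bigr)^{1,0}$, the $(1,0)$-part of the Chern covariant derivative of the endomorphism $G^{-1}\dt G$. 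Substituting Proposition \ref{p:Gflow} yields $\dt\gU=-\bigl(\N^C_G S^G_g\bigr)^{1,0}$.

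Next I would rewrite $S^G_g$ through $\gU$. Since $\N^C_G$ and $\N^C_{\til G}$ share the same $\delb$-operator on $\QQ_{\i\del\gw_0}$, their curvatures satisfy $\Omega^C_G=\Omega^{\til G}+\bar\N^{\til G}\gU$ (the $\gU\wedge\gU$ term being of type $(2,0)$, hence absent from the $(1,1)$ curvature), whence $S^G_g=\i\tr_\gw\Omega^C_G=S^{\til G}_g+\i\tr_\gw\bar\N^{\til G}\gU$. Inserting this into the previous step, $\dt\gU$ acquires the second-order term $-\i\,\N^C_G\tr_\gw\bar\N^{\til G}\gU$, which after commuting $\tr_\gw$ past the covariant derivatives becomes the Chern Laplacian $\gD\gU$ up to torsion corrections (present because $g$ is only pluriclosed), together with the lower-order piece $-\i\,\N^C_G\tr_\gw\Omega^{\til G}$, which will contribute the reaction term $\gU\star\N^{\til G}\Omega^{\til G}$.

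Then I would differentiate the norm $\brs{\gU}^2_{g,G^{-1},G}$ directly. Besides $\IP{\dt\gU,\gU}+\IP{\gU,\dt\gU}$, one collects the variations of the base metric $g$ via $\dt\gw=-\rho_B^{1,1}$ (which enters through $\tr_\gw$) and of the fibre metric $G$ via $G^{-1}\dt G=-S^G_g$ (which enters through the metrics on $\mathcal{E}$ and $\mathcal{E}^*$). Simultaneously, converting the second-order term of the previous step into $\gD\brs{\gU}^2$ via Weitzenb\"ock commutation produces the negative definite gradient terms $-\brs{\N\gU}^2-\brs{\bar\N\gU+\bar T\cdot\gU}^2$ — the $\bar T\cdot\gU$ correction being exactly the torsion term generated when moving $\tr_\gw$ past $\bar\N$ on a pluriclosed background — plus curvature terms built from $\Omega^C_G=\Omega^{\til G}+\bar\N^{\til G}\gU$, $S^G_g$, $\rho_B$, and the Chern torsion $T=-\i\del\gw$. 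One then checks that every contribution built out of the time-dependent data $\rho_B$, $S^G_g$ and the $\gU$-dependent part of $\Omega^C_G$ cancels, leaving precisely the stated reaction terms $T\star\gU\star\Omega^{\til G}+\gU\star\bar\gU\star\Omega^{\til G}+\gU\star\N^{\til G}\Omega^{\til G}$, which involve only the fixed background $\til G$ and the Chern torsion of $g$.

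The hard part is this last cancellation: carefully tracking the many torsion terms produced by commuting $\tr_\gw$ past covariant derivatives on a non-K\"ahler manifold, and verifying that the time-dependent pieces drop out. What makes it tractable is that the evolution equation $G^{-1}\dt G=-S^G_g$ has exactly the same shape as in the untwisted case, and every algebraic identity used in the cancellation (relating $\Omega^C_G$, $S^G_g$, $\gU$, and the Chern torsion of $g$) is a purely formal consequence of the Chern-connection formalism which holds verbatim on the twisted bundle $\QQ_{\i\del\gw_0}$. Consequently, once the three steps above are set up, the remaining computation is line-by-line identical to that of \cite[Proposition 3.3]{JordanStreets} and introduces no new analytic input.
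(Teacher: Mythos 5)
Your proposal is correct and follows the same route as the paper: the paper's proof simply notes that once the evolution equation $G^{-1}\dt G = -S^G_g$ is established on the twisted Courant algebroid (Proposition \ref{p:Gflow}), the computation is formally identical to \cite[Proposition 3.3]{JordanStreets}, which is exactly the reduction you carry out, with the intermediate steps (variation of the Chern connection, the identity $\Omega^C_G = \Omega^{\til{G}} + \bar{\N}^{\til{G}}\gU$, and the torsion corrections from commuting $\tr_\gw$ past covariant derivatives) filled in correctly.
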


\subsection{Global existence and convergence on Bismut-flat backgrounds} \label{ss:PCFge}

In this subsection we will finish the proof of Theorem \ref{t:mainthm}.  The proof relies principally on the evolution equations for $G$ derived above.  Using the assumption of a background metric which is Bismut-flat, it is possible to remove the reaction terms in the heat equations of Propositions \ref{p:paraSL} and \ref{p:Upsev}, leading directly to a priori estimates on $G$.  These will then lead to full regularity and convergence of the pluriclosed flow.  Before giving the proof we prove a technical lemma which clarifies that a priori estimates for $G$ translate in a natural way to various a priori estimates for the associated pluriclosed metric $\gw$ and torsion potential $\gb$.

\begin{lemma} \label{l:gentoclassical} Suppose $(M^{2n}, J)$ is a complex manifold, and suppose $(\gw, \gb)$ and $(\til{\gw}, \til{\gb})$ are pluriclosed metrics on $M$ such that the associated generalized Hermitian metrics $G$ and $\til{G}$ are defined on the same holomorphic Courant algebroid $\QQ$ and satisfy
\begin{align*}
\Lambda^{-1}\tilde{G} \leq G\leq \Lambda \tilde{G}, \qquad \brs{\gU(G,\tilde{G})}_{g,G}<\gL.
\end{align*}
Then there exists a constant $A = A(n, \gL)$ such that
\begin{align*}
A^{-1} \til{g} \leq g \leq A \til{g}, \qquad \brs{\gb}_{\til{g}} \leq A, \qquad \brs{\gU(g, \til{g})}_{\til{g}} \leq A.
\end{align*}
	
	\begin{proof}
	By the assumed uniform equivalence of $G$ and $\tilde{G}$ and their explicit expressions from Proposition \ref{p:GQ0}, it follows that for $\xi \in \Lambda^{1,0}$ we have
	$$\Lambda^{-1}\tilde{g}^{-1}(\xi,\bar{\xi}) = \Lambda^{-1}\tilde{G}(\xi,\bar{\xi})\leq G(\xi,\bar{\xi}) = g^{-1}(\xi,\bar{\xi}) \leq \Lambda\tilde{G}(\xi,\bar{\xi}) = \Lambda\tilde{g}^{-1}(\xi,\bar{\xi}).$$ 
	This implies the claimed uniform equivalence of $g$ and $\til{g}$.  A similar argument using sections of the tangent bundle then yields the upper bound for $\brs{\gb}_{\til{g}}$.
	
	To estimate the connection, we first recall the computation of the connection coefficients in Lemma \ref{l:GChernCxn}
	$$(\gU^G)_{iW^a}^{Z^b} = g^{\bc b}g^{\bq a}\bar{T}_{\bq\bc i} - \tilde{g}^{\bc b}\tilde{g}^{\bq a}\bar{\tilde{T}}_{\bq\bc i}.$$
	Taking norms and using the uniform equivalence of $g$ and $\til{g}$ and the estimate for $\gU(G, \til{G})$ we obtain
	$$|T|_{\til{g}} \leq C.$$ 	
	Now turning to the tangent-tangent component of $\gU^G$ from Lemma \ref{l:GChernCxn} we can express
	$$(\gU^g)_{ia}^b = (\gU^G)_{i Z^a}^{Z^b} - \sqrt{-1} g^{\bc b}g^{\bq p}\gb_{ap}\bar{T}_{\bq\bc i} +  \sqrt{-1} \tilde{g}^{\bc b}\tilde{g}^{\bq p}\tilde{\gb}_{ap}\bar{\tilde{T}}_{\bq\bc i}.$$
	We have estimated all terms on the right hand side of this equation, thus the estimate for $\gU(g, \til{g})$ follows.
	\end{proof}
\end{lemma}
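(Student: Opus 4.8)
The plan is to establish the three estimates in sequence by exploiting the explicit block form of the generalized Hermitian metric $G$ given in Proposition \ref{p:GQ0}, together with the explicit Chern connection coefficients from Lemma \ref{l:GChernCxn}. First I would observe that the lower-right block of $G$ is exactly $g^{-1}$ (in the trivialization $\QQ \cong T^{1,0} \oplus T^*_{1,0}$ afforded by the pair $(\gw,\gb)$, after conjugating away $\gb$ pointwise if convenient), so that restricting the quadratic form $G$ to $\Lambda^{1,0} = T^*_{1,0}$ recovers $g^{-1}$ exactly; similarly for $\til G$ one recovers $\til g^{-1}$. Hence the hypothesis $\Lambda^{-1} \til G \leq G \leq \Lambda \til G$ immediately forces $\Lambda^{-1} \til g^{-1} \leq g^{-1} \leq \Lambda \til g^{-1}$, equivalently $\Lambda^{-1} \til g \leq g \leq \Lambda \til g$, giving the metric equivalence with $A = \Lambda$. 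For the bound on $\gb$, I would restrict $G$ to $T^{1,0}$: the upper-left block of $G$ is $g_{i\bj} + \gb_{ik}\bgb_{\bj\bl}g^{\bl k}$, and comparing with $\til G$ restricted to $T^{1,0}$ plus the already-established equivalence of $g$ and $\til g$ yields an upper bound for the $g$-norm (hence the $\til g$-norm) of $\gb$; alternatively one evaluates $G$ on a mixed vector $X + \xi$ and uses the off-diagonal block $\sqrt{-1}\gb_{ip}g^{\bl p}$ directly. Either way this is a routine linear-algebra argument and I would not belabor it.

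The more delicate estimate is the one for $\gU(g,\til g)$, and here the strategy is to work through the block components of $\gU(G,\til G) = \N^C_G - \N^C_{\til G}$ using the formulas of Lemma \ref{l:GChernCxn}. The key point is that the $W^a \to Z^b$ block of the Chern connection $\Gamma_{iW^a}^{Z^b} = -g^{\bc b}g^{\bq a}\bar T_{\bq\bc i}$ involves \emph{only} the torsion $T = -\sqrt{-1}\del\gw$ and the metric $g$ — no $\gb$ — so the corresponding block of $\gU(G,\til G)$ is $-g^{\bc b}g^{\bq a}\bar T_{\bq\bc i} + \til g^{\bc b}\til g^{\bq a}\bar{\til T}_{\bq\bc i}$. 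Taking $\til g$-norms and using the already-proven equivalence $A^{-1}\til g \leq g \leq A\til g$ together with the hypothesis $\brs{\gU(G,\til G)}_{g,G} < \Lambda$, one extracts a uniform bound $\brs{T}_{\til g} \leq C(n,\Lambda)$. With $T$ controlled, I then turn to the $Z^a \to Z^b$ block: from Lemma \ref{l:GChernCxn} one has $\Gamma_{iZ^a}^{Z^b} = g^{\bc b}g_{a\bc,i} - \sqrt{-1}g^{\bc b}g^{\bq p}\gb_{ap}\bar T_{\bq\bc i}$, and the leading term $g^{\bc b}g_{a\bc,i}$ is precisely (a coordinate form of) the Chern connection $\N^C_g$ acting on $T^{1,0}$. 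Thus $(\gU(g,\til g))_{ia}^b$ can be solved for in terms of $(\gU(G,\til G))_{iZ^a}^{Z^b}$ minus the two torsion-times-$\gb$ correction terms (one for $g$, one for $\til g$), all of which are now estimated: the hypothesis bounds the first, the torsion bound and the $\gb$ bound control the rest. Assembling these gives $\brs{\gU(g,\til g)}_{\til g} \leq A$ for a constant depending only on $n$ and $\Lambda$.

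The main obstacle, such as it is, is bookkeeping rather than conceptual: one must be careful that the norm $\brs{\gU(G,\til G)}_{g,G^{-1},G}$ is taken with respect to $g$ on the base and $G$ on the fibers, so that passing to $\til g$-norms of individual tensorial blocks requires invoking the metric equivalence in the right places and tracking how the $B$-field conjugation (used to set $\gb = 0$ at a point) interacts with the connection coefficients — but since $B$-field transformations are constant gauge transformations at the chosen point and act by conjugation, they do not affect norms, so this is harmless. There is also a minor subtlety that the formulas in Lemma \ref{l:GChernCxn} are written for a \emph{specific} background $\QQ = \mathcal{Q}_{\sqrt{-1}\del\gw_0}$; since the hypothesis asserts $G$ and $\til G$ live on the \emph{same} holomorphic Courant algebroid, we may apply those formulas to both with a common underlying trivialization, which is exactly what makes the difference $\gU(G,\til G)$ a well-defined $\End(\QQ)$-valued one-form and the subtraction of torsion correction terms legitimate. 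No estimate here requires the pluriclosed flow or any global input — it is purely pointwise linear algebra plus the structure of the Chern connection — so the proof is short once the block decomposition is set up as in the authors' argument.
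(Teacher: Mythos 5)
Your proposal follows essentially the same route as the paper's proof: reading off $g^{-1}$ from the lower-right block of $G$ to get metric equivalence, bounding $\gb$ from the remaining blocks, extracting the torsion bound from the $W^a \to Z^b$ component of $\gU(G,\til{G})$, and then solving for $\gU(g,\til{g})$ from the $Z^a \to Z^b$ component using Lemma \ref{l:GChernCxn}. The only cosmetic difference is that you spell out the bookkeeping about norms and the $B$-field conjugation that the paper leaves implicit.
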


\begin{proof}[Proof of Theorem \ref{t:mainthm}] Let $\gw_F$ denote the given Bismut-flat metric, and let $\mathcal Q_{\i\del \gw_F}$ denote the holomorphic Courant algebroid associated to $[\del \gw_{F}]$.  Furthermore, let $G_F$ denote the Hermitian metric on $\QQ_{\i \del \gw_F}$ associated to $\gw_F$ via Proposition \ref{p:GQ0}.  Now given $\gw_0$ another pluriclosed metric satisfying $[\del \gw_0] = [\del \gw_F] \in H^{2,1}_{\delb}$, we can choose $\gb \in \Lambda^{2,0}$ such that
\begin{align*}
\delb {\gb} = \del \gw_F - \del \gw_0.
\end{align*}
Let $G_0$ denote the metric associated to $(\gw_0, \gb_0)$ as in Proposition \ref{p:GQ0}.  By \cite{PCF}, there exists $\ge > 0$ and a solution $G_t$  to pluriclosed flow with initial data $G_0$ on $[0, \ge)$.  Since $\gw_F$ is Bismut-flat, it follows from Proposition \ref{p:Bismutid} that the Chern curvature of $G_F$ vanishes, and thus we obtain from Proposition \ref{p:paraSL} the evolution equations
\begin{gather} \label{f:mt10}
\begin{split}
\left(\dt - \gD \right) \tr_{G} G_F =&\ -  \brs{\gU(G_F,G)}^2_{g,G^{-1},G_F}.
\end{split}
\end{gather}
It follows from the maximum principle that, for any interval $[0,T]$ on which the solution exists,
\begin{align*}
\sup_{M \times [0,T]} \tr_{G} G_F \leq&\ \sup_{M \times \{0\}} \tr_{G} G_F.
\end{align*}
We note that for two generalized Hermitian metrics $G$, $\til{G}$ on a fixed holomorphic Courant algebroid it follows that $\tr_G \til{G} = \tr_{\til{G}} G$ (see Lemma \ref{l:Schwarz2}).  Thus there exists a uniform constant $\gL > 0$ so that for any time $t$ one has
\begin{align} \label{f:mt15}
\gL^{-1} G_F \leq G_t \leq \gL G_F.
\end{align}
Furthermore, let $\gU = \gU(G_t, G_F)$ as in Definition \ref{d:Upsdef}.  Again using that the Chern curvature of $G_F$ vanishes, It follows from Proposition \ref{p:Upsev} that
\begin{align} \label{f:mt20}
\left(\dt - \gD \right) \brs{\gU}^2_{g,G} =&\ - \brs{\N \gU}^2_{g,G} - \brs{\bar{\N} \gU + \bar{T} \cdot \gU}^2_{g,G}.
\end{align}
It follows from the maximum principle that for any interval $[0,T]$ on which the solution exists,
\begin{align*}
\sup_{M \times [0,T]} \brs{\gU(G, G_F)}^2_{g,G} \leq \sup_{M \times \{0\}} \brs{\gU(G, G_F)}^2_{g,G}.
\end{align*}
Using Lemma \ref{l:gentoclassical} we thus obtain uniform equivalence and a $C^1$ bound for the classical objects $(\gw_t, \gb_t)$.  We can now argue as in \cite[Theorem 1.2]{JordanStreets}, there are uniform $C^{\infty}$ estimates for $G_t$ and $g_t$ for all times.  Thus the flow exists for all time, finishing the claim of long-time existence.

To show convergence we first note that by putting together (\ref{f:mt10}) and (\ref{f:mt20}), and using the uniform equivalence estimate of (\ref{f:mt15}), it follows that there is a constant $A > 0$ so that
\begin{align*}
\left(\dt - \gD \right) \left( t \brs{\gU(G,G_F)}^2_{g,G} + A \tr_{G_F} G \right) \leq 0.
\end{align*}
It follows from the maximum principle that for any time $t > 0$ one has
\begin{align} \label{f:mt30}
\sup_{M \times \{t\}} \brs{\gU(G, G_F)}^2_{g,G} \leq \frac{A \sup_{M \times \{0\}} \tr_{G_F} G}{t}.
\end{align}
Using the uniform $C^{\infty}$ estimates for $G_t$, every sequence of times $\{t_j\} \to \infty$ contains a subsequence which converges to a limiting metric $G_{\infty}$.  By (\ref{f:mt30}) it follows that $\gU(G_{\infty}, G_F) = 0$, and thus $G_{\infty}$ is Chern-flat.  Choosing such a flat limit $G_{\infty}$, we can repeat the above analysis with $G_F$ replaced by $G_{\infty}$.  In particular, for a large time $t$ such that $\brs{G_{t} - G_{\infty}}_{G_{\infty}} \leq \ge$, we have
\begin{align*}
2n \leq \sup_{M \times \{t\}} \tr_{G_{\infty}} G \leq 2n + C \ge
\end{align*}
for some uniform constant $C$.  These inequalities will be preserved for all times larger than $t$ by (\ref{f:mt10}), and then the convergence to $G_{\infty}$ follows.
\end{proof}

\subsection{Contractibility of the space of generalized K\"ahler structures} \label{ss:GKflow}

In this subsection we prove Corollary \ref{c:GKcor}, regarding generalized K\"ahler structures on Bismut flat manifolds. Key to our argument is the Poisson tensor 
$$
\gs = \tfrac{1}{2} g^{-1} [I,J]
$$ 
associated to a generalized K\"ahler structure $(g, I, J)$. The tensor $\gs$ was discovered by Pontecorvo-Hitchin \cite{AGG,PontecorvoCS, HitchinPoisson}, and was shown to be constant along a solution to generalized K\"ahler-Ricci flow in \cite[Corollary 1.5]{GibsonStreets}.

\begin{proof}[Proof of Corollary \ref{c:GKcor}] Let $(M^{2n}, \gw_F, J)$ be a Bismut-flat manifold, and fix $(g, I, J)$ a generalized K\"ahler structure on $M$ such that $[\del \gw_J] = [\del \gw_F] \in H^{2,1}_{\delb}$.  By Theorem \ref{t:mainthm}, the solution $(g_t, I_t, J)$ to generalized K\"ahler-Ricci flow with initial condition $(g, I, J)$ exists for all time, and $g_t$ converges to a Bismut-flat metric.  Using \cite[Lemma 9.27]{garciafernStreets2020} we can express the evolution equation for $I$ as
\begin{align*}
\dt I =&\ L_{\theta_I^{\sharp} - \theta_J^{\sharp}} I = \rho_B^J \cdot \gs.
\end{align*}
As the evolution of $G_t$ is determined by $\rho_B$ and $G_t$ is converging to a limit $G_{\infty}$, it follows that $I$ will also converge to a smooth limit $I_{\infty}$ as claimed.  As $I_t = \phi_t^* I$ and $I_t$ has uniform $C^{\infty}$ estimates, it follows that the diffeomorphisms $\phi_t$ also converge to a limit $\phi_{\infty}$, and thus $I_{\infty}$ is biholomorphic to $I$, as claimed.
\end{proof}

\subsection{Global existence on complex surfaces with \texorpdfstring{$\gk \geq 0$}{k >= 0}} \label{ss:kod1}

In this subsection we prove Theorem \ref{t:kod1thm}.  A key point is to find a background metric with certain curvature properties for every choice of $\QQ$.
To begin, we show that every class in $H^{1,1}_{BC}$ is represented by an invariant form. We build on an observation of Teleman, which gives an explicit characterization of the failure of the $\del\delb$-lemma on compact complex surfaces.  

\begin{lemma}[\cite{Telemancone}]\label{l:Teleman}

Let $(M^4, J)$ be a compact complex surface, and let
\begin{align*}
B^{1,1}_{\mathbb R} = \{ \mu \in \Lambda^{1,1}_{\mathbb R}\ |\ \exists a \in \Lambda^1_{\mathbb R} = d a \}.
\end{align*}
Then there exists an exact sequence
\begin{align*}
0 \to \i \del \delb \Lambda^0_{\mathbb R} \to B^{1,1}_{\mathbb R} \to \mathbb R,
\end{align*}
where the final map is the $L^2$ inner product with a pluriclosed metric.
\end{lemma}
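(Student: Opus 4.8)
The statement to prove is Lemma~\ref{l:Teleman} (attributed to Teleman), asserting the exactness of
\[
0 \to \i \del \delb \Lambda^0_{\mathbb R} \to B^{1,1}_{\mathbb R} \to \mathbb R,
\]
where the last arrow is pairing with a fixed pluriclosed metric. Let me sketch how I would argue this.

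\textbf{Plan of proof.} The plan is to identify $B^{1,1}_{\mathbb R}$ with the real points of the space of $d$-exact $(1,1)$-forms and then compute the relevant cohomology using the Frölicher spectral sequence / Bott--Chern-to-de~Rham comparison on a compact complex surface. First I would observe that exactness at the left term, i.e.\ injectivity of $\i\del\delb\Lambda^0_{\mathbb R}\hookrightarrow B^{1,1}_{\mathbb R}$, is immediate since any $\mu=\i\del\delb u=\tfrac12 d\, d^c u$ is real and $d$-exact, and $\i\del\delb u=0$ with $u$ real forces $u$ constant by the maximum principle (the Chern Laplacian $\gD u = \tr_\gw(\i\del\delb u)$ vanishes so $u$ is constant), hence the map $\Lambda^0_{\mathbb R}/\mathbb R \to B^{1,1}_{\mathbb R}$ is injective. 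The content is therefore exactness in the middle: a real $(1,1)$-form $\mu = da$ lies in $\i\del\delb\Lambda^0_{\mathbb R}$ if and only if $\int_M \mu\wedge\gw = 0$ for a (equivalently any) pluriclosed metric $\gw$. The "only if" direction is a direct integration by parts: if $\mu = \i\del\delb u$ then $\int_M \i\del\delb u \wedge \gw = \int_M u\, \i\del\delb\gw = 0$ since $\gw$ is pluriclosed; moreover this shows the final map is well-defined on $B^{1,1}_{\mathbb R}$ independent of the choice of primitive $a$, because two primitives differ by a closed real $1$-form $c$ and $\int_M dc\wedge\gw$... wait, more carefully: $\mu$ itself is the datum, so well-definedness is automatic; the point is rather that the value $\int_M\mu\wedge\gw$ is independent of the choice of pluriclosed $\gw$ among those with $\int_M\mu\wedge\gw$ to be compared, which again follows since any two pluriclosed forms $\gw,\gw'$ differ in Aeppli class by $\del\bar\gamma+\delb\gamma$ plus possibly a genuine cohomological difference — so I should instead phrase the last map as landing in $\mathbb R$ for a \emph{fixed} choice of $\gw$, and the exactness statement does not claim independence of $\gw$.

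\textbf{The core surjectivity-of-kernel step.} The heart is: given $\mu = da \in B^{1,1}_{\mathbb R}$ with $\int_M\mu\wedge\gw=0$, produce $u\in\Lambda^0_{\mathbb R}$ with $\i\del\delb u = \mu$. I would decompose $a = a^{1,0}+a^{0,1}$ with $a^{0,1}=\overline{a^{1,0}}$; then $\mu = da$ being of type $(1,1)$ forces $\del a^{1,0}=0$ and $\delb a^{0,1}=0$, so $a^{1,0}$ is a $\del$-closed $(1,0)$-form and $\mu = \delb a^{1,0} + \del a^{0,1} = \delb a^{1,0} + \overline{\delb a^{1,0}} = 2\R(\delb a^{1,0})$. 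On a compact complex surface, every $\del$-closed $(1,0)$-form is automatically $d$-closed (this is the classical fact that $H^0(\Omega^1)$ injects into $H^1_{dR}$, equivalently holomorphic $1$-forms are closed on compact complex surfaces — use $\int_M \delb a^{1,0}\wedge\overline{\delb a^{1,0}} = -\int_M \del a^{1,0}\wedge\overline{\del a^{1,0}}$ type Stokes argument, or the surface-specific Hodge theory). Hmm — but $a^{1,0}$ here is merely smooth with $\del a^{1,0}=0$, not holomorphic. The right tool is instead: since $\mu$ is $d$-exact and of type $(1,1)$, its class in $H^{1,1}_{BC}$ (Bott--Chern) maps to zero in $H^2_{dR}$; Teleman's analysis says the kernel of $H^{1,1}_{BC}\to H^2_{dR}$ on a compact complex surface is at most one-dimensional, detected precisely by the pluriclosed pairing, and is zero iff $M$ satisfies the $\del\delb$-lemma (iff $b_1$ even). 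So I would invoke the structure of $H^{1,1}_{BC}$: write $[\mu]_{BC}$, know it dies in de~Rham, deduce $[\mu]_{BC}$ lies in the (at most $1$-dimensional) kernel, and then the linear functional $[\mu]_{BC}\mapsto \int_M\mu\wedge\gw$ is either identically zero on that kernel (when $b_1$ even, kernel $=0$) or an isomorphism onto $\mathbb R$ (when $b_1$ odd) — in either case $\int_M\mu\wedge\gw=0$ forces $[\mu]_{BC}=0$, i.e.\ $\mu\in\i\del\delb\Lambda^0 \cap \Lambda^{1,1}_{\mathbb R}=\i\del\delb\Lambda^0_{\mathbb R}$.

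\textbf{Main obstacle.} The real work is establishing that $\ker\big(H^{1,1}_{BC}(M)\to H^2_{dR}(M;\mathbb R)\big)$ is at most one-dimensional and is exactly detected by the pluriclosed pairing — this is precisely Teleman's theorem and I would cite \cite{Telemancone} for it rather than reprove it, but I should at least sketch why: on a compact complex surface $\dim H^{1,1}_{BC} = h^{1,1}_{BC}$ and the surjection onto the image in $H^2_{dR}$ has kernel of dimension $h^{1,1}_{BC} - (\text{rank of image})$; a count using $h^{1,1}_{BC} = h^{1,1}_{\delb} + (b_1 \bmod 2$-type correction$)$ together with the fact that a pluriclosed metric gives a nonzero functional on $B^{1,1}_{\mathbb R}$ (since $\int_M\gw\wedge\gw>0$ but $\gw$ itself need not be exact) pins the kernel down to dimension $=0$ or $1$ according to parity of $b_1$. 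I expect the bookkeeping with Bott--Chern, Aeppli, and Dolbeault numbers on surfaces, and the precise identification of the pairing, to be the delicate point; everything else (Stokes, maximum principle, type decomposition of $da$) is routine.
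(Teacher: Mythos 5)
The paper offers no proof of this lemma: it is quoted from Teleman's paper and used as a black box, so there is no in-text argument to compare yours against line by line. Judged on its own, your proposal handles the routine directions correctly (the inclusion $\i \del \delb \Lambda^0_{\mathbb R} \subset B^{1,1}_{\mathbb R}$ since $\i \del \delb u = \tfrac{1}{2} d (d^c u)$, and the vanishing $\int_M \i \del \delb u \wedge \gw = \int_M u\, \i \del \delb \gw = 0$), and the type decomposition $\mu = 2\R(\delb a^{1,0})$ with $\del a^{1,0} = 0$ is a correct observation, though you then abandon it. The genuine gap is in the core step: that a $d$-exact real $(1,1)$-form with $\int_M \mu \wedge \gw = 0$ lies in $\i \del \delb \Lambda^0_{\mathbb R}$. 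The kernel of $H^{1,1}_{BC} \to H^2_{dR}$ in real bidegree $(1,1)$ \emph{is} the quotient $B^{1,1}_{\mathbb R} / \i \del \delb \Lambda^0_{\mathbb R}$, so asserting that this kernel is at most one-dimensional and detected by the pluriclosed pairing is a restatement of the lemma, not a reduction of it; the Bott--Chern dimension bookkeeping you sketch is therefore circular without independent input. Moreover, the parenthetical claim that $\int_M \gw \wedge \gw > 0$ shows the pairing is nontrivial on $B^{1,1}_{\mathbb R}$ is a non sequitur, since $\gw$ is not exact and hence says nothing about the restriction of the functional to exact forms; and in any case the sequence is not continued by $\to 0$, so surjectivity and the parity of $b_1$ are beside the point.

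The missing step has a short, self-contained proof, which is presumably what the citation stands in for and which avoids Bott--Chern bookkeeping entirely. Given $\mu = da \in B^{1,1}_{\mathbb R}$ with $\int_M \mu \wedge \gw = 0$, set $f = \tr_{\gw} \mu$, so that $\int_M f\, \tfrac{\gw^2}{2} = \int_M \mu \wedge \gw = 0$. The Chern Laplacian $P u = \tr_{\gw}(\i \del \delb u)$ is a second order elliptic operator of index zero with no zeroth order term, so $\Ker P = \mathbb R$ by the strong maximum principle; the kernel of its formal $L^2(\tfrac{\gw^2}{2})$-adjoint is therefore also one-dimensional, and it contains the constants precisely because $\int_M P u\, \tfrac{\gw^2}{2} = \int_M \i \del \delb u \wedge \gw = \int_M u\, \i \del \delb \gw = 0$, i.e. because $\gw$ is pluriclosed. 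Hence $\operatorname{Im} P = \{ f \ |\ \int_M f\, \gw^2 = 0 \}$ and one can solve $P u = f$ with $u$ real. Then $\nu := \mu - \i \del \delb u$ is a $d$-exact real $(1,1)$-form with $\tr_{\gw} \nu = 0$, hence anti-self-dual, and Stokes' theorem gives $0 = \int_M \nu \wedge \nu = - \int_M \brs{\nu}^2_{\gw}\, \tfrac{\gw^2}{2}$, forcing $\nu \equiv 0$ and $\mu = \i \del \delb u$ as required. If you intend to keep the key step at the level of ``cite Teleman,'' that is consistent with what the paper itself does; but then the cohomological sketch should be dropped rather than offered as a justification.
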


\begin{lemma} \label{l:invmetrics} Let $(M^4, J)$ be a compact complex surface which is the total space of a holomorphic principal $T^2$-bundle.  Given $\gw$ a pluriclosed metric on $M$ there exists a $T^2$-invariant metric in $[\gw] \in H^{1,1}_A$.
\begin{proof} 
First choose $\hat{\gw}$ a $T^2$-invariant pluriclosed metric on $M$, which always exists by averaging an arbitrary pluriclosed metric over the $T^2$-action.  We first use this to show that every class in $H^{1,1}_{BC}$ admits $T^2$-invariant representatives.  Now fix $[\phi] \in H^{1,1}_{BC}$.  We can define
\begin{align*}
\hat{\phi} := \int_{g \in T^2} g^* \phi.
\end{align*}
The form $\hat{\phi}$ is $T^2$-invariant, and to show that $[\hat{\phi}] = [\phi] \in H^{1,1}_{BC}$, it suffices by a standard chain-homotopy argument to show that for $X$ any vector field tangent to the $T^2$-action, one has $L_X \phi \in \i \del \delb \Lambda^0_{\mathbb R}$.  By Lemma \ref{l:Teleman}, it suffices to show that the $L^2$ inner product with $\hat{\gw}$ vanishes.  Using Stokes Theorem and the Cartan formula we compute
\begin{align*}
\IP{L_X \phi, \hat{\gw}}_{\hat{\gw}} = \int_M L_X \phi \wedge \hat{\gw} = \int_M L_X \left( \phi \wedge \hat{\gw} \right) = \int_M d i_X \left( \phi \wedge \hat{\gw} \right) = 0,
\end{align*}
as required.  Now knowing this, we assume that $\hat{\gw}$ is the average of $\gw$ over the $T^2$-action, and show that $[\hat{\gw}] = [\gw] \in H^{1,1}_A$.  It suffices to prove that the infinitesimal action preserves Aeppli cohomology classes, and for this we use that integration gives a perfect pairing between $H_{A}^{1,1}$ and $H_{BC}^{1,1}$.  Thus we fix $[\phi] \in H^{1,1}_{BC}$ with a $T^2$-invariant representative $\hat{\phi}$, and integrate by parts to conclude 
\begin{align*}
\int_M L_X \gw \wedge \hat{\phi} = - \int_M \gw \wedge d (i_X \hat{\phi}) = - \int_M \gw \wedge L_X \hat{\phi} = 0,
\end{align*}
as required.
\end{proof}
\end{lemma}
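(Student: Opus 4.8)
The plan is to obtain the invariant metric simply by averaging $\gw$ over the torus, and then to verify that this averaging does not change the Aeppli class. Set $\hat{\gw} := \int_{g \in T^2} g^* \gw$. Since the $T^2$-action on $M$ is holomorphic, $g^*$ commutes with $\del$ and $\delb$, so $\hat{\gw}$ is a positive, $J$-invariant $(1,1)$-form with $\del\delb\hat{\gw} = 0$, i.e.\ a $T^2$-invariant pluriclosed metric. It therefore suffices to prove $[\hat{\gw}] = [\gw] \in H^{1,1}_A$. Since $\phi_t := \exp(tX)$ is a path of biholomorphisms for any generator $X$ of the torus action, a standard chain-homotopy argument reduces this to showing $[L_X\gw] = 0 \in H^{1,1}_A$ for every such $X$; note that $L_X\gw$ is a real $(1,1)$-form in $\Ker\del\delb$ (as $L_X$ commutes with $\del\delb$ and $\del\delb\gw = 0$), so it genuinely defines an Aeppli class.

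To detect vanishing in $H^{1,1}_A$ I would use the nondegenerate integration pairing $H^{1,1}_A \times H^{1,1}_{BC} \to \mathbb{C}$ on the compact complex surface $M$, reducing the goal to $\int_M L_X\gw \wedge \phi = 0$ for every $[\phi] \in H^{1,1}_{BC}$. The crucial subtlety---and the step I expect to be the main obstacle---is that to integrate $L_X\gw \wedge \phi = L_X(\gw\wedge\phi) - \gw\wedge L_X\phi$ by parts one needs $\phi$ to be $T^2$-invariant. So I would first prove the auxiliary fact that \emph{every Bott--Chern class on $M$ admits a $T^2$-invariant representative}. Given $[\phi] \in H^{1,1}_{BC}$, its average $\hat{\phi}$ is invariant and, by the same chain-homotopy reduction, $[\hat{\phi}] = [\phi] \in H^{1,1}_{BC}$ once we know $L_X\phi \in \i\del\delb\Lambda^0_{\mathbb R}$ for each generator $X$. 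But $L_X\phi = d\,i_X\phi \in B^{1,1}_{\mathbb R}$, so Teleman's exact sequence (Lemma~\ref{l:Teleman}) tells us $L_X\phi \in \i\del\delb\Lambda^0_{\mathbb R}$ precisely when its $L^2$-inner product with the pluriclosed metric $\hat{\gw}$ is zero, and Stokes' theorem together with invariance of $\hat{\gw}$ gives
\begin{align*}
\IP{L_X\phi,\hat{\gw}}_{L^2} = \int_M L_X\phi\wedge\hat{\gw} = \int_M L_X\!\left(\phi\wedge\hat{\gw}\right) = \int_M d\,i_X\!\left(\phi\wedge\hat{\gw}\right) = 0 .
\end{align*}

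With an invariant representative $\hat{\phi}$ available, the argument closes: for $X$ tangent to the $T^2$-action,
\begin{align*}
\int_M L_X\gw\wedge\hat{\phi} = \int_M L_X\!\left(\gw\wedge\hat{\phi}\right) - \int_M \gw\wedge L_X\hat{\phi} = \int_M d\,i_X\!\left(\gw\wedge\hat{\phi}\right) = 0 ,
\end{align*}
using $L_X\hat{\phi} = 0$ and Stokes. Hence $[L_X\gw]$ pairs trivially with all of $H^{1,1}_{BC}$, so $[L_X\gw] = 0 \in H^{1,1}_A$, and the chain-homotopy argument yields $[\hat{\gw}] = [\gw] \in H^{1,1}_A$, completing the proof. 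Everything reduces to Stokes' theorem and holomorphic averaging; the one genuinely subtle point is that the Aeppli statement has to be bootstrapped through the analogous Bott--Chern statement, which is exactly where Teleman's description of $B^{1,1}_{\mathbb R}$ is needed.
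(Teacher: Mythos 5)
Your proposal is correct and follows essentially the same route as the paper's proof: averaging over the torus, reducing the Aeppli statement to the vanishing of $[L_X\gw]$ via the perfect pairing with $H^{1,1}_{BC}$, and bootstrapping through the existence of invariant Bott--Chern representatives, which is exactly where Teleman's exact sequence and the Stokes computation enter. The only cosmetic difference is that you use the average of $\gw$ itself as the invariant pluriclosed metric in the Teleman step, whereas the paper first fixes an arbitrary invariant pluriclosed metric; this changes nothing.
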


Next we record a key lemma computing the Bismut curvature tensor of a $T^2$-invariant pluriclosed metric.  Such invariant metrics are described by the Kaluza-Klein ansatz (cf. \cite[Definition 5.1]{Streetssolitons}), and the curvature computation below is implicit in \cite[Proposition 5.12]{Streetssolitons}.

\begin{lemma} \label{l:invcurvature} Let $(M^4, J)$ be a compact complex surface which is the total space of a holomorphic principal $T^2$-bundle over a Riemann surface $\Sigma$.  Let $\gw$ denote a $T^2$-invariant pluriclosed metric on $M$, expressed as
\begin{align*}
\gw = \pi^* \gw_{\Sigma} + \tr_h \mu \wedge J \mu,
\end{align*}
where $\gw_{\Sigma}$ is a metric on $\Sigma$,  $\mu + J \mu$ is a Hermitian connection, and $h$ is an inner product on $\mathfrak t^2$.  Then
\begin{align*}
\Omega^B =&\ \tfrac{1}{2} \left( R_{\gw_{\Sigma}} - \brs{F_{\mu}}^2_{g_{\Sigma}, h} \right) \pi^* \gw_{\Sigma} \otimes \pi^* \gw_{\Sigma} + h \left( d \tr_{\gw_{\Sigma}} F_{\mu}, \cdot \right) \otimes \pi^* \gw_{\Sigma}.
\end{align*}
\end{lemma}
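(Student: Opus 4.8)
The plan is to compute the Bismut curvature of $\gw$ directly from the Kaluza--Klein structure, reducing everything on $M$ to geometric data on the base $\Sigma$ via the horizontal lift determined by the connection $\mu + J\mu$. First I would fix a local coordinate $z$ on $\Sigma$ together with a $g_{\gw_\Sigma}$-orthonormal horizontal coframe $\{e^1, e^2\}$ and the vertical coframe $\{\mu^a\}$ (an orthonormal basis of $\mathfrak t^2$ with respect to $h$, so that $\tr_h \mu \wedge J\mu = \sum_a \mu^a \wedge J\mu^a$), and record the structure equations $d\mu^a = \pi^* F_\mu^a$ plus the Levi-Civita connection forms of $\gw_\Sigma$ pulled back. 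The key computational input is the defining formula for $\N^B$ in Definition \ref{d:Chernconn}, namely $\IP{\N^B_X Y, Z} = \IP{\N_X Y, Z} - \tfrac12 d^c\gw(X,Y,Z)$, together with $H = -d^c\gw = d\gw(J,J,J)$, which for this ansatz one computes to be $H = \sum_a \pi^* F_\mu^a \wedge J\mu^a - \pi^*(\cdot)$ type expressions; the upshot is that $H$ is the natural `$F \wedge \mu$' three-form of the fibration, and that the vertical directions are $H$-parallel in a controlled way. Since $\N^B$ is metric and the torsion $H$ is $T^2$-invariant and basic-plus-linear-in-$\mu$, the curvature $\Omega^B = \tfrac12\IP{R^B(\cdot,\cdot)Je_i,e_i}$ (Definition \ref{d:Riccidef}) collapses: the only nonzero contributions are those built from the base curvature $R_{\gw_\Sigma}$ and from $F_\mu$ and its covariant derivative.

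The main steps, in order, are: (i) write $\gw$, $J$, $H$ explicitly in the adapted frame and verify $dH = 0$ (the pluriclosed condition), which is where the specific form $\tr_h \mu \wedge J\mu$ and $dd^c\gw = 0$ get used; (ii) compute the full Bismut connection forms, separating the ``Levi-Civita of the base'' piece, the ``$O(2)$-connection on the torus bundle'' piece $F_\mu$, and the torsion correction $-\tfrac12 d^c\gw$ — here the Bismut connection has the pleasant feature that it is exactly the connection adapted to the bundle structure, so the vertical subbundle is $\N^B$-parallel; (iii) compute $R^B$ by the structure equation $\Omega^B = d\omega^B + \omega^B \wedge \omega^B$ for the connection forms, and extract $\rho_B$ by contracting against $J$. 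Because the vertical bundle is $\N^B$-parallel and rank $2$, and because $R^B \in \Lambda^2 \otimes \Lambda^{1,1}$ (stated before Proposition \ref{p:Bismut11PC}), the curvature two-form endomorphism is essentially scalar on each of the two invariant rank-one complex subbundles, so $\rho_B$ is determined by two scalar functions on $\Sigma$; tracking which is which yields the two terms in the claimed formula — the $\tfrac12(R_{\gw_\Sigma} - \brs{F_\mu}^2)\,\pi^*\gw_\Sigma \otimes \pi^*\gw_\Sigma$ term from the ``base curvature minus Yang--Mills energy'' and the $h(d\tr_{\gw_\Sigma} F_\mu, \cdot)\otimes \pi^*\gw_\Sigma$ term from the covariant derivative (the Bianchi-type term) of $F_\mu$. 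Alternatively, and more cheaply, one may invoke Proposition 5.12 of \cite{Streetssolitons} directly, since the statement explicitly asserts the computation is implicit there, and merely unwind the notational translation between the Kaluza--Klein conventions used there and the $(\gw_\Sigma, \mu, h)$ conventions used here.

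The hard part will be the bookkeeping in step (iii): keeping track of horizontal-vertical mixing in the curvature forms, making sure the antisymmetrizations in $d^c\gw(X,Y,Z)$ are handled correctly when $X,Y,Z$ run over mixed horizontal/vertical inputs, and confirming that all the ``would-be'' extra terms (e.g. quadratic-in-$F_\mu$ pieces landing in the horizontal-horizontal block beyond the $-\brs{F_\mu}^2$ term, or $\N F_\mu$ pieces outside the stated one) actually cancel. This cancellation is forced structurally by $R^B \in \Lambda^2 \otimes \Lambda^{1,1}$ and by the first Bianchi identity for $\N^B$ with torsion $H$ (Proposition \ref{p:Bismut20PC} shows $(\Omega^B)_{ijk\bl} = \N^C_k T_{ij\bl}$, and the analogous vertical components vanish because $T$ restricted to the fibers is zero), but verifying it cleanly requires care. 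Given the explicit pointer to \cite[Proposition 5.12]{Streetssolitons}, I expect the cleanest write-up is to state that the formula follows from that reference after translating the ansatz, and to include only the brief frame computation needed to identify the two scalar curvature quantities, rather than reproducing the full curvature calculation from scratch.
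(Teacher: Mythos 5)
Your plan is essentially the paper's: the paper gives no proof of this lemma at all, stating it and deferring to \cite[Proposition 5.12]{Streetssolitons}, so your fallback of citing that reference and translating the ansatz is literally what the authors do, and your adapted-frame computation is the natural way to verify it from scratch. Two points of care before you commit to step (iii). First, the lemma is a formula for the full $(4,0)$ Bismut curvature $\Omega^B$, not for $\rho_B$; contracting against $J$ ``to extract $\rho_B$'' traces the wrong pair of indices. By Proposition \ref{p:Bismutpair} the $\Lambda^{1,1}$ factor of $R^B$ sits in the endomorphism slots, and the content that is actually consumed downstream (in the proof of Theorem \ref{t:kod1thm}, via $S^{\hat{G}}_g = \i \tr_{\gw}\Omega^{\hat{G}}$ and Proposition \ref{p:Bismutid}) is that this factor is always proportional to $\pi^*\gw_{\Sigma}$, so that the trace against the \emph{evolving} metric is bounded by $\tr_{\gw}\pi^*\gw_{\Sigma}$ and hence by the Schwarz lemma estimate; you must keep the whole tensor. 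Second, the assertion that the curvature is ``essentially scalar on each of the two invariant rank-one subbundles, so $\rho_B$ is determined by two scalar functions on $\Sigma$'' is inconsistent with the formula you are trying to prove: the second term $h(d\tr_{\gw_{\Sigma}}F_{\mu},\cdot)\otimes\pi^*\gw_{\Sigma}$ is governed by a $\mathfrak{t}^2$-valued one-form, not a scalar, and it occupies the mixed horizontal--vertical block of the two-form factor. It is genuinely present whenever $\tr_{\gw_{\Sigma}}F_{\mu}$ is non-constant, which is exactly why the proof of Theorem \ref{t:kod1thm} first normalizes $\mu$ by Hodge theory to make $\tr_{\gw_{\Sigma}}F_{\mu}$ constant. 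Neither issue breaks the strategy, but both change what the frame computation must actually produce.
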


\begin{proof}[Proof of Theorem \ref{t:kod1thm}] Fix $(M^4, J)$ a minimal compact complex non-K\"ahler surface of Kodaira dimension $\gk \geq 0$.  It follows from the Kodaira classification of surfaces (cf. \cite{Wall} \S 7) that $M$ must be an elliptic fibration, with only multiple fibers.  In particular, it follows that there exists a finite cover of $M$ which admits a holomorphic principal $T^2$ action, and it suffices to show global existence on such manifolds.  In particular we suppose $\pi : M^4 \to \Sigma$ is a holomorphic $T^2$-bundle over a compact Riemann surface $\Sigma$ where $\chi(\Sigma) < 0$ if $\gk = 1$ and $\chi(\Sigma) = 0$ if $\gk = 0$.

Fix $\gw_0$ a pluriclosed metric on $M$.  By Lemma \ref{l:invmetrics}, there exists a $T^2$-invariant metric $\hat{\gw} = \pi^* \gw_{\Sigma} + \tr_h \mu \wedge J \mu \in [\gw_0]$.  We can modify the metric on $\Sigma$ so that $R_{\Sigma}$ is constant, and by Hodge theory further modify the principal connection $\mu$ to assume that $\tr_{\gw_{\Sigma}} F_{\mu}$ is constant.  These changes preserve the associated Aeppli cohomology class on $M$ and so we assume without loss of generality that $\hat{\gw}$ satisfies these conditions.
The metric $\hat{\gw}$ defines a holomorphic Courant algebroid $\QQ_{\i \del \hat{\gw}}$ together with a generalized Hermitian metric $\hat{G}$.  Now by construction we can choose $\gb \in \Lambda^{2,0}$ such that
\begin{align*}
\delb {\gb} =  \del \hat{\gw} - \del \gw_0 .
\end{align*}
Let $G_0$ denote the metric associated to $(\gw_0, \gb_0)$ as in Proposition \ref{p:GQ0}.  By \cite{PCF}, there exists $\ge > 0$ and a solution $G_t$  to pluriclosed flow with initial data $G_0$ on $[0, \ge)$.

We first obtain a partial estimate on the metric using the fibration structure and the Schwarz Lemma.  The holomorphic Courant algebroid $\QQ$ comes equipped with a natural holomorphic projection map onto $T^{1,0}_M$ which we denote $\pi_{\QQ}$.  We furthermore obtain from the fibration structure the holomorphic map $d \pi : T^{1,0}_M \to T^{1,0}_{\Sigma}$.  Composing these yields the holomorphic map of vector bundles $\Phi = d \pi \circ \pi_{\QQ} : \QQ \to T^{1,0}_{\Sigma}$.  It follows from the construction and Proposition \ref{p:GQ0} that
\begin{align*}
\brs{\Phi}^2_{G, g_{\Sigma}} = \tr_{\gw} \pi^* \gw_{\Sigma}.
\end{align*}
Furthermore, using Lemma \ref{l:Schwarz} (where $A = A(G, g_{\Sigma}, \Phi)$ is defined by (\ref{f:Adef})) we obtain
\begin{align*}
\dt \tr_{\gw} \pi^* \gw_{\Sigma} =&\ \dt \brs{\Phi}^2_{G,g_{\Sigma}}\\
=&\ \IP{\Phi \circ S^G_g, \Phi}_{G^{-1}, g_{\Sigma}}\\
=&\ \gD_g \brs{\Phi}^2_{G^{-1}, g_{\Sigma}}  - \brs{A}^2_{g,G^{-1}, g_{\Sigma}} + \IP{ S^{g_{\Sigma}}_g \circ \Phi, \Phi}_{G^{-1}, g_{\Sigma}}\\
=&\ \gD_g \tr_{\gw} \pi^* \gw_{\Sigma} - \brs{A}^2_{g,G^{-1}, g_{\Sigma}} + \tfrac{1}{2} R_{\Sigma} \left( \tr_{\gw} \pi^* \gw_{\Sigma} \right)^2.
\end{align*}
Note that by construction $R_{\Sigma}$ is constant, either $-2$ or $0$ depending on whether $\gk = 1$ or $0$.
By the maximum principle we conclude for any smooth existence time $T > 0$ the estimate
\begin{align} \label{f:kod20}
\sup_{M \times \{T\}} \tr_{\gw} \pi^* \gw_{\Sigma} \leq \left(C + \tfrac{1}{2} \brs{R_{\Sigma}} T \right)^{-1}.
\end{align}

We next establish the uniform equivalence of the metrics $G_t$ along the flow.  Combining Proposition \ref{p:paraSL} with Proposition \ref{p:Bismutid}, the curvature computation of Lemma \ref{l:invcurvature}, and the estimate (\ref{f:kod20}) we obtain for a topological constant $\gl$,
\begin{align*}
\left(\dt - \gD \right) \tr_G \hat{G} =&\ - \brs{\gU(G, \hat{G})}^2_{g,G^{-1}, \hat{G}} + \tr_G \IP{ S^{\til{G}}_g \cdot, \cdot}_{\til{G}}\\
=&\ - \brs{\gU(G, \hat{G})}^2_{g,G^{-1}, \hat{G}} + \tr_G \IP{ \tr_g \left( \gl \pi^* \gw_{\Sigma} \otimes \psi_* \pi^* \gw_{\Sigma} \right) \cdot , \cdot}_{\hat{G}}\\
\leq&\ C \left( \tr_g \pi^* \gw_{\Sigma} \right) \tr_{G} \hat{G}\\
\leq&\ C \tr_{G} \hat{G}.
\end{align*}
By the maximum principle we conclude
\begin{align*}
\sup_{M \times \{T\}} \tr_G \hat{G} \leq e^{CT}.
\end{align*}
This implies that $G_t$ and $\hat{G}$ are uniformly equivalent on any compact time interval, and from Proposition \ref{p:Upsev} we conclude
\begin{align*}
\left(\dt - \gD \right) \brs{\gU(G, \hat{G})}^2_{g,G^{-1},G} \leq&\ C \left(1 + \brs{\gU(G, \hat{G})}^2_{g,G^{-1},G} \right).
\end{align*}
By the maximum principle we obtain a uniform estimate for $\brs{\gU(G, \hat{G})}^2_{g,G^{-1},G}$ on any finite time interval, and the proof of long-time existence now concludes as in Theorem \ref{t:mainthm}.
\end{proof}

\begin{rmk} \label{r:convrmk} For $\gw_t$ a pluriclosed flow as in Theorem \ref{t:kod1thm} one expects the blowdown limits $\frac{\gw_t}{2t}$ to converge with bounded curvature to either a point in the case $\gk = 0$, or to $(\Sigma, \gw_{\Sigma})$ in the case $\gk = 1$, where $\gw_{\Sigma}$ denotes the unique conformal metric of curvature $-1$.  A basic fact in this direction is that pluriclosed flow preserves the area of the $T^2$ fibers, and thus the area of the fibers goes to zero along any blowdown sequence.  Furthermore, assuming the solution satisfies type III curvature and diameter bounds, this kind of limiting behavior can be derived by the use of an expanding entropy functional \cite{GindiStreets}.  Assuming the initial metric is $T^2$-invariant this behavior was shown in \cite{StreetsRYMPCF}.
\end{rmk}

%\bibliographystyle{plain}
%\bibliography{GFJS_NKCYPCF}

\begin{thebibliography}{10}

\bibitem{AgricolaFriedrich}
Ilka Agricola and Thomas Friedrich.
\newblock A note on flat metric connections with antisymmetric torsion.
\newblock {\em Differential Geom. Appl.}, 28(4):480--487, 2010.

\bibitem{AlexandrovIvanov}
Bogdan Alexandrov and Stefan Ivanov.
\newblock Vanishing theorems on {H}Hermitian manifolds.
\newblock {\em Differential Geometry and its Applications}, 14(3):251--265,
  2001.

\bibitem{AngellaTomassini}
Daniele Angella and Adriano Tomassini.
\newblock On {B}ott-{C}hern cohomology and formality.
\newblock {\em J. Geom. Phys.}, 93:52--61, 2015.

\bibitem{AGG}
V.~Apostolov, P.~Gauduchon, and G.~Grantcharov.
\newblock Bi-{H}ermitian structures on complex surfaces.
\newblock {\em Proc. London Math. Soc. (3)}, 79(2):414--428, 1999.

\bibitem{ApostolovGualtieri}
V.~Apostolov and M.~Gualtieri.
\newblock Generalized {K}\"ahler manifolds, commuting complex structures, and split tangent bundles.
\newblock {\em Comm. Math. Phys.}, 271(2):561--575, 2007.

\bibitem{ASnondeg}
Vestislav Apostolov and Jeffrey Streets.
\newblock The nondegenerate generalized {K}\"ahler {C}alabi-{Y}au problem.
\newblock {\em To appear in Crelle's Journal}.

\bibitem{ASUtoric}
Vestislav Apostolov, Jeffrey Streets, and Yury Ustinovskiy.
\newblock Generalized {K}\"ahler-{R}icci flow on toric {F}ano varieties.
\newblock {\em arXiv:2104.03268}.

\bibitem{Arroyo}
Romina~M. Arroyo and Ramiro~A. Lafuente.
\newblock The long-time behavior of the homogeneous pluriclosed flow.
\newblock {\em Proc. Lond. Math. Soc. (3)}, 119(1):266--289, 2019.

\bibitem{BK1}
Richard Bamler and Bruce Kleiner.
\newblock Ricci flow and diffeomorphism groups of 3-manifolds.
\newblock {\em arXiv:1712.06197}.

\bibitem{Bischoff}
Francis Bischoff, Marco Gualtieri, and Maxim Zabzine.
\newblock Morita equivalence and the generalized {K}\"ahler potential.
\newblock {\em arXiv:1804.05412}.

\bibitem{Bismut}
Jean-Michel Bismut.
\newblock A local index theorem for non-{K}\"ahler manifolds.
\newblock {\em Math. Ann.}, 284(4):681--699, 1989.

\bibitem{BiswasGomez}
Indranil Biswas and Tom\'{a}s~L. G\'{o}mez.
\newblock Stability of symplectic and orthogonal {P}oincar\'{e} bundles.
\newblock {\em J. Geom. Phys.}, 76:97--106, 2014.

\bibitem{Boling}
Jess Boling.
\newblock Homogeneous solutions of pluriclosed flow on closed complex surfaces.
\newblock {\em J. Geom. Anal.}, 26(3):2130--2154, 2016.

\bibitem{BSV}
Lev Borisov, Simon Salamon, and Jeff Viaclovsky.
\newblock Twistor geometry and warped product orthogonal complex structures.
\newblock {\em Duke Math. J.}, 156(1):125--166, 2011.

\bibitem{BuchdahlHK}
N.~P. Buchdahl.
\newblock Hermitian-{E}instein connections and stable vector bundles over
  compact complex surfaces.
\newblock {\em Math. Ann.}, 280(4):625--648, 1988.

\bibitem{BHH}
Reto Buzano, Robert Haslhofer, and Or~Hershkovits.
\newblock The moduli space of two-convex embedded spheres.
\newblock {\em J. Diff. Geom.}, 118(2):189--221, 2021.

\bibitem{Friedanetal}
C.~G. Callan, D.~Friedan, E.~J. Martinec, and M.~J. Perry.
\newblock Strings in background fields.
\newblock {\em Nuclear Phys. B}, 262(4):593--609, 1985.

\bibitem{CaoKRF}
Huai~Dong Cao.
\newblock Deformation of {K}\"ahler metrics to {K}\"ahler-{E}instein metrics on
  compact {K}\"ahler manifolds.
\newblock {\em Invent. Math.}, 81(2):359--372, 1985.

\bibitem{CartanSchouten2}
\'Elie Cartan and Jan~Arnoldus Schouten.
\newblock On {R}iemannian geometries admitting an absolute parallelism.
\newblock {\em Proc. Amsterdam}, 29:933--946, 1926.

\bibitem{CartanSchouten}
\'Elie Cartan and Jan~Arnoldus Schouten.
\newblock On the geometry of the group-manifold of simple and semi-simple
  groups.
\newblock {\em Proc. Amsterdam}, 29:803--815, 1926.

\bibitem{DonaldsonHYM}
S.~K. Donaldson.
\newblock Anti self-dual {Y}ang-{M}ills connections over complex algebraic
  surfaces and stable vector bundles.
\newblock {\em Proc. London Math. Soc. (3)}, 50(1):1--26, 1985.

\bibitem{DonaldsonMM}
S.~K. Donaldson.
\newblock Moment maps and diffeomorphisms.
\newblock {\em Asian J. Math.}, 3(3):1--16, 1999.

\bibitem{FinoFlow}
Nicola Enrietti, Anna Fino, and Luigi Vezzoni.
\newblock The pluriclosed flow on nilmanifolds and tamed symplectic forms.
\newblock {\em J. Geom. Anal.}, 25(2):883--909, 2015.

\bibitem{garciafern2018canonical}
Mario Garcia-Fernandez, Roberto Rubio, C.~S. Shahbazi, and Carl Tipler.
\newblock Canonical metrics on holomorphic {C}ourant algebroids.
\newblock {\em arXiv:1803.01873}.

\bibitem{garciafern2020gauge}
Mario Garcia-Fernandez, Roberto Rubio, and Carl Tipler.
\newblock Gauge theory for string algebroids.
\newblock {\em arXiv:2004.11399}.

\bibitem{garciafern2018holomorphic}
Mario Garcia-Fernandez, Roberto Rubio, and Carl Tipler.
\newblock Holomorphic string algebroids.
\newblock {\em arXiv:1807.10329}.

\bibitem{garciafernStreets2020}
Mario Garcia-Fernandez and Jeffrey Streets.
\newblock Generalized {R}icci {F}low.
\newblock {\em AMS University Lecture Series, 2021}.

\bibitem{GauduchonIvanov}
P.~Gauduchon and S.~Ivanov.
\newblock Einstein-{H}ermitian surfaces and {H}ermitian {E}instein-{W}eyl
  structures in dimension {$4$}.
\newblock {\em Math. Z.}, 226(2):317--326, 1997.

\bibitem{Gauduchonfibres}
Paul Gauduchon.
\newblock Fibr\'es hermitiens \`a endomorphisme de {R}icci non n\'egatif.
\newblock {\em Bull. Soc. Math. France}, 105(2):113--140, 1977.

\bibitem{Gauduchon1form}
Paul Gauduchon.
\newblock La {$1$}-forme de torsion d'une vari\'et\'e hermitienne compacte.
\newblock {\em Math. Ann.}, 267(4):495--518, 1984.

\bibitem{GauduchonWeyl}
Paul Gauduchon.
\newblock Structures de {W}eyl-{E}instein, espaces de twisteurs et vari\'et\'es
  de type {$S^1\times S^3$}.
\newblock {\em J. Reine Angew. Math.}, 469:1--50, 1995.

\bibitem{Gauduchonconn}
Paul Gauduchon.
\newblock Hermitian connections and {D}irac operators.
\newblock {\em Boll. Un. Mat. Ital. B (7)}, 11(2, suppl.):257--288, 1997.

\bibitem{GibsonStreets}
Matthew Gibson and Jeffrey Streets.
\newblock Deformation classes in generalized {K}\"{a}hler geometry.
\newblock {\em Complex Manifolds}, 7(1):241--256, 2020.

\bibitem{GindiStreets}
Steven Gindi and Jeffrey Streets.
\newblock Structure of collapsing solutions of generalized {R}icci flow.
\newblock {\em J. Geom. Anal.}, 31:4253--4286, 2021.

\bibitem{GGPoon}
D.~Grantcharov, G.~Grantcharov, and Y.~S. Poon.
\newblock Calabi-{Y}au connections with torsion on toric bundles.
\newblock {\em J. Differential Geom.}, 78(1):13--32, 2008.

\bibitem{GualtieriGKG}
Marco Gualtieri.
\newblock Generalized {K}\"ahler geometry.
\newblock {\em Comm. Math. Phys.}, 331(1):297--331, 2014.

\bibitem{HitchinPoisson}
Nigel Hitchin.
\newblock Instantons, {P}oisson structures and generalized {K}\"ahler geometry.
\newblock {\em Comm. Math. Phys.}, 265(1):131--164, 2006.

\bibitem{Hofer}
Thomas H\"{o}fer.
\newblock Remarks on torus principal bundles.
\newblock {\em J. Math. Kyoto Univ.}, 33(1):227--259, 1993.

\bibitem{IvanovPapadopoulos}
S.~Ivanov and G.~Papadopoulos.
\newblock Vanishing theorems on {$(\ell|k)$}-strong {K}\"{a}hler manifolds with
  torsion.
\newblock {\em Adv. Math.}, 237:147--164, 2013.

\bibitem{JordanStreets}
Joshua Jordan and Jeffrey Streets.
\newblock On a {C}alabi-type estimate for pluriclosed flow.
\newblock {\em Adv. Math.}, 366:107097, 18, 2020.

\bibitem{LeeStreets}
Man-Chun Lee and Jeffrey Streets.
\newblock {Complex manifolds with negative curvature operator}.
\newblock {\em arXiv:1903.12645, to appear IMRN}.

\bibitem{LiYauHYM}
Jun Li and Shing-Tung Yau.
\newblock Hermitian-{Y}ang-{M}ills connection on non-{K}\"{a}hler manifolds.
\newblock In {\em Mathematical aspects of string theory ({S}an {D}iego,
  {C}alif., 1986)}, volume~1 of {\em Adv. Ser. Math. Phys.}, pages 560--573.
  World Sci. Publishing, Singapore, 1987.

\bibitem{lt}
Martin L\"{u}bke and Andrei Teleman.
\newblock {\em The {K}obayashi-{H}itchin correspondence}.
\newblock World Scientific Publishing Co., Inc., River Edge, NJ, 1995.

\bibitem{MarquesPSC}
Fernando~Cod\'{a} Marques.
\newblock Deforming three-manifolds with positive scalar curvature.
\newblock {\em Ann. of Math. (2)}, 176(2):815--863, 2012.

\bibitem{PPZ}
Duong~H. Phong, Sebastien Picard, and Xiangwen Zhang.
\newblock The anomaly flow and the {F}u-{Y}au equation.
\newblock {\em Ann. PDE}, 4(2):Paper No. 13, 60, 2018.

\bibitem{PiovaniTomassini}
Piovani, Riccardo and Tomassini, Adriano,
\newblock Aeppli cohomology and {G}auduchon metrics,
\newblock {\em Complex Anal. Oper. Theory}, Paper No. 14, 22, 2020.

\bibitem{Pittie}
Harsh~V. Pittie.
\newblock The {D}olbeault-cohomology ring of a compact, even-dimensional {L}ie group.
\newblock {\em Proc. Indian Acad. Sci. Math. Sci.}, 98(2-3):117--152, 1988.

\bibitem{PontecorvoCS}
Massimiliano Pontecorvo.
\newblock Complex structures on {R}iemannian four-manifolds.
\newblock {\em Math. Ann.}, 309(1):159--177, 1997.

\bibitem{Samelson}
H.~Samelson.
\newblock A class of complex-analytic manifolds.
\newblock {\em Portugaliae Math.}, 12:129--132, 1953.

\bibitem{StreetsRYMPCF}
Jeffrey Streets.
\newblock Ricci {Y}ang-{M}ills flow on surfaces and pluriclosed flow on
  elliptic fibrations.
\newblock {\em arXiv:2102.09538}.

\bibitem{StreetsPCFBI}
Jeffrey Streets.
\newblock Pluriclosed flow, {B}orn-{I}nfeld geometry, and rigidity results for
  generalized {K}\"ahler manifolds.
\newblock {\em Comm. Partial Differential Equations}, 41(2):318--374, 2016.

\bibitem{StreetsGG}
Jeffrey Streets.
\newblock Pluriclosed flow on manifolds with globally generated bundles.
\newblock {\em Complex Manifolds}, 3(1):222--230, 2016.

\bibitem{StreetsND}
Jeffrey Streets.
\newblock Generalized {K}\"{a}hler-{R}icci flow and the classification of
  nondegenerate generalized {K}\"{a}hler surfaces.
\newblock {\em Adv. Math.}, 316:187--215, 2017.

\bibitem{StreetsSTB}
Jeffrey Streets.
\newblock Pluriclosed flow on generalized {K}\"ahler manifolds with split
  tangent bundle.
\newblock {\em J. Reine Angew. Math.}, 739:241--276, 2018.

\bibitem{Streetssolitons}
Jeffrey Streets.
\newblock Classification of solitons for pluriclosed flow on complex surfaces.
\newblock {\em Math. Ann.}, 375(3-4):1555--1595, 2019.

\bibitem{SG}
Jeffrey Streets.
\newblock Pluriclosed flow and the geometrization of complex surfaces.
\newblock In {\em Geometric Analysis}, volume 333 of {\em Progress in
  Mathematics}, pages 471--510. Birkh\"auser, 2020.

\bibitem{PCF}
Jeffrey Streets and Gang Tian.
\newblock A parabolic flow of pluriclosed metrics.
\newblock {\em Int. Math. Res. Not. IMRN}, (16):3101--3133, 2010.

\bibitem{GKRF}
Jeffrey Streets and Gang Tian.
\newblock Generalized {K}\"ahler geometry and the pluriclosed flow.
\newblock {\em Nuclear Phys. B}, 858(2):366--376, 2012.

\bibitem{PCFReg}
Jeffrey Streets and Gang Tian.
\newblock Regularity results for pluriclosed flow.
\newblock {\em Geom. Topol.}, 17(4):2389--2429, 2013.

\bibitem{SU1}
Jeffrey Streets and Yury Ustinovskiy.
\newblock {Classification of generalized K\"ahler-Ricci solitons on complex
  surfaces}.
\newblock {\em arXiv:1907.03819, to appear in Comm. Pure. Appl. Math}.

\bibitem{SU2}
Jeffrey Streets and Yury Ustinovskiy.
\newblock The {G}ibbons-{H}awking ansatz in generalized {K}\"ahler geometry.
\newblock {\em arXiv:2009.00778}.

\bibitem{Strominger}
Andrew Strominger.
\newblock Superstrings with torsion.
\newblock {\em Nuclear Phys. B}, 274(2):253--284, 1986.

\bibitem{Telemancone}
Andrei Teleman.
\newblock The pseudo-effective cone of a non-{K}\"{a}hlerian surface and
  applications.
\newblock {\em Math. Ann.}, 335(4):965--989, 2006.

\bibitem{TosattiNKCY}
Valentino Tosatti.
\newblock Non-{K}\"ahler {C}alabi-{Y}au manifolds.
\newblock In {\em Analysis, complex geometry, and mathematical physics: in
  honor of Duong H. Phong}, volume 644 of {\em Contemp. Math.}, pages 261--277.
  Amer. Math. Soc., 2015.

\bibitem{CRF1}
Valentino Tosatti and Ben Weinkove.
\newblock On the evolution of a {H}ermitian metric by its {C}hern-{R}icci form.
\newblock {\em J. Differential Geom.}, 99(1):125--163, 2015.

\bibitem{UYau}
K.~Uhlenbeck and S.-T. Yau.
\newblock On the existence of {H}ermitian-{Y}ang-{M}ills connections in stable
  vector bundles.
\newblock volume~39, pages S257--S293. 1986.
\newblock Frontiers of the mathematical sciences: 1985 (New York, 1985).

\bibitem{Wall}
C.T.C. Wall.
\newblock Geometric structures on compact complex analytic surfaces.
\newblock {\em Topology}, 25(2):119--153, 1986.

\bibitem{Bismutflat}
Qingsong Wang, Bo~Yang, and Fangyang Zheng.
\newblock On {B}ismut flat manifolds.
\newblock {\em Trans. AMS}, 373:5747--5772, 2020.

\bibitem{YauCC}
Shing~Tung Yau.
\newblock On the {R}icci curvature of a compact {K}\"ahler manifold and the
  complex {M}onge-{A}mp\`ere equation. {I}.
\newblock {\em Comm. Pure Appl. Math.}, 31(3):339--411, 1978.

\end{thebibliography}

\end{document}